\providecommand{\U}[1]{\protect\rule{.1in}{.1in}}
\newcommand{\mynewtheorem}[2]{
	\newaliascnt{#1}{dummy}
	\newtheorem{#1}[#1]{#2}
	\aliascntresetthe{#1}
	\expandafter\def\csname #1autorefname\endcsname{#2}
}
\theoremstyle{plain}
\theoremstyle{definition}
\theoremstyle{remark}
\newcommand{\oshuffle}{\overline{\shuffle}}
\newcommand{\osh}{\overline{\mathrm{Sh}}}
\newcommand{\sh}{\mathrm{Sh}}
\newcommand{\bfX}{\boldsymbol{X}}
\newcommand{\bfY}{\boldsymbol{Y}}
\newcommand{\bfH}{\overline{H}}
\newcommand{\bfK}{\overline{K}}
\newcommand{\Dshuffle}{\Delta_{\shuffle}}
\newcommand{\wDshuffle}{\widetilde\Delta_{\shuffle}}
\newcommand{\wDOshuffle}{\widetilde\Delta_{\overline \shuffle}}
\newcommand{\DOshuffle}{\Delta_{\overline \shuffle}}
\newcommand{\Dtensor}{\Delta_{\otimes}}
\newcommand{\boldi}{{\boldsymbol i}}
\newcommand{\boldj}{{\boldsymbol j}}
\newcommand{\boldk}{{\boldsymbol k}}
\newcommand{\boldh}{{\boldsymbol h}}
\newcommand{\bolda}{{\boldsymbol \alpha}}
\newcommand{\boldb}{{\boldsymbol \beta}}
\newcommand{\boldc}{{\boldsymbol \gamma}}
\newcommand{\boldd}{{\boldsymbol \delta}}
\newcommand{\p}{{\lfloor p \rfloor}}
\newcommand{\xhapprox}{\upharpoonleft_{\bfX} \!\! \bfH}
\newcommand{\xhtrue}{\uparrow_{\bfX} \!\! \bfH}
\newcommand{\edif}{\mathrm{d}}
\newcommand{\dif}{\mathrm{d}}
\newcommand{\bbR}{\mathbb R}
\renewcommand*{\eqref}[1]{%
	\hyperref[{#1}]{\textup{\tagform@{\ref*{#1}}}}%
}
\begin{document}

\date{\today}

\title{A combinatorial approach to geometric rough paths and their controlled paths}
\date{\today}
\author[1]{Thomas Cass\thanks{The research of the first named author is supported by EPSRC Programme Grant EP/S026347/1.}}
\author[2]{Bruce K.\ Driver}
\author[3]{Christian Litterer\thanks{The research of the third named author was partially supported by EPSRC grant EP/V005413/1.}}
\author[1]{Emilio Rossi Ferrucci\thanks{The fourth named author's PhD is funded by the Centre for Doctoral Training in Financial Computing \& Analytics.}\thanks{Corresponding author: \href{mailto:emilio.rossi-ferrucci16@imperial.ac.uk}{\nolinkurl{emilio.rossi-ferrucci16@imperial.ac.uk}}}}
\affil[1]{\small Dept.\ of Mathematics, Imperial College London}
\affil[2]{\small Dept.\ of Mathematics, University of California San Diego}
\affil[3]{\small Dept.\ of Mathematics, University of York}
\date{\today}
\maketitle

\abstract{We develop the structure theory for transformations of weakly geometric
	rough paths of bounded $1 < p$-variation and their controlled paths. Our approach differs from existing approaches as it does not rely on smooth
	approximations. We derive an explicit combinatorial expression
	for the rough path lift of a controlled path, and use it to obtain fundamental identities such as the associativity of the rough integral, the adjunction between pushforwards and pullbacks, and a change of variables formula for rough differential equations (RDEs). As applications we define rough paths, rough integration and RDEs on manifolds, extending the results of \cite{CDL15} to the case of arbitrary $p$.\\[0.5em] MSC classes: 60L20}\\

\section*{Introduction}
The theory of rough paths and the fundamental ideas that underlie it
are now well established, with a vast number of applications in areas as
diverse as stochastic and numerical analysis, machine learning and
stochastic partial differential equations. Martin Hairer's celebrated work on
regularity structures, for example, can be regarded as a far-reaching
generalisation that is inspired by some of the fundamental ideas
underlying rough path theory (and indeed rough paths can be identified with
a special case of the theory). The basic theory is now somewhat classical
and several approaches have been developed to obtain (and in some cases
significantly extend) its core results \cite{Lyo98} \cite{LQ02} \cite{Gub04} \cite{LCL07} \cite{Gub10} \cite{FV10} \cite{FH14}. In these approaches, the core results are usually either developed explicitly for the case $2\leq p<3$ or follow from properties inherited by
approximation arguments from the corresponding properties of the lifts of smooth paths.

In this paper we use algebraic and combinatorial
methods to explore the basic structure underlying transformations of rough paths of
arbitrary roughness under sufficiently regular maps. Our approach allows us
to work directly with weakly geometric rough paths and leads to a clean separation
of analysis and algebra, yielding explicit combinatorial
descriptions of the resulting objects. The structure theory for geometric rough paths is usually
deduced from the corresponding properties of the (lifts of) smooth paths by
taking closures in a suitable rough path metric. In finite dimensions, this means that identities for geometric rough paths readily extend to the weakly
geometric setting. However, this extension is predicated on the close
relation of weakly geometric and geometric rough paths established by Friz
and Victoir \cite{FV10} for paths with values in finite-dimensional spaces. It
is presently not clear if a similar relation holds in infinite dimensions.
Similarly, the smooth approximation arguments are not available when
studying more general branched rough paths. Although the combinatorics is, at times, quite complex, we have made it a priority to state all results in a clear, coordinate-free manner, without resorting to the coordinate notation that is used in the proofs.

Another goal of this paper is to unify Lyons's original approach and Gubinelli's \say{linearised} version \cite{Gub04} which deals with controlled rough paths, or \emph{controlled paths} as we call them here, to avoid ambiguity. This is widely considered to be the most general and modern approach to rough path theory. Many fundamental results, such as the definition and convergence of controlled-rough integrals, are not present in the literature, stated in this setting. The fact that we work with controlled paths provides further motivation to avoid smooth approximation: controlled paths are only indirectly defined in terms of their reference rough path, and a smooth approximation of the rough path does not automatically yield one of the controlled path.

In \cite{CDL15} the authors derive, without using smooth approximation arguments,
the basic structure theory for weakly geometric $p-$rough paths for the case 
$2\leq p<3$, leading to an explicit characterisation of rough paths
constrained to an embedded submanifold of $\bbR^d$. In
this paper we generalise these results to weakly
geometric rough paths of arbitrary roughness. Even though our results
are proved in the finite-dimensional setting thanks to the use of bases, the statements themselves are stated in a coordinate free manner and preserve their meaning when dropping the finite-dimensional assumption. We therefore expect that with some care they may be generalised to the infinite-dimensional setting, and hope that they will inform future work on branched rough paths. Generalising the arguments from the $2 \leq p < 3$ case is
a non-trivial challenge, as identities and proofs become significantly more
combinatorial in nature. As applications we develop the theory of weakly geometric rough paths on manifolds, their controlled paths, and the associated integration theory and rough differential equations (RDEs).
For arbitrary $p$-rough paths the combinatorial identities corresponding to
key structural properties of rough paths such as the functoriality of
pushforwards are complex and difficult to obtain directly. Fortunately,
analogous results for controlled paths are more readily obtained
exploiting their more linear structure. Consequently, we first establish
identities for controlled paths and then deduce the results for full
rough paths. It is a well-known fact going back to \cite{Gub04} that a controlled path can in principle be lifted
to a full rough path. An explicit construction for the case $2\leq p<3$ can for
example be found in \cite{FH14}. For the general lift of a controlled rough
path in the branched case Gubinelli has obtained an inductive formula \cite[Remark 8.7]{Gub10}. Unfortunately, the case $2\leq p<3$ is not very
representative for the construction of the lift and the description of the
lifted rough path in \cite{Gub10} is not very explicit. In this paper we
give an closed-form description of the lift for general $p$. The result may be
regarded as a generalisation of the construction of the rough integral
carried out in \cite{LCL07}. However, our result does
not require any of the symmetries of the one-form integrands, and instead uses a combinatorial property of the ordered shuffle. In addition,
generalising an argument from \cite{CDLL16}, we are able show that a controlled
rough path defined with respect to a weakly geometric rough path lifts again
to a weakly geometric path.

The paper is structured as follows: in \autoref{sec:alg} we introduce
algebraic preliminaries and notations, in particular the shuffle and ordered shuffle
products, and associated lemmata that will be integral to our constructions. In \autoref{sec:grps} we recall the basic notion of weakly geometric and controlled paths and proceed to construct the lift of a controlled path for general $p$. Underlying the proof is a
generalisation of an argument in \cite{CDLL16} that gave a similar result for
Lip-$\gamma$ 1-forms, but avoids the use of symmetries not
present in general Gubinelli derivatives. The construction of the lift for general $p$ has
considerable combinatorial complexity but enables us to define pushforwards of
rough paths using controlled paths. In order to study the properties of
those pushforwards we define a change of reference rough path for a controlled
rough path and study its structure. The machinery developed for controlled
rough paths allows us in \autoref{prop:properties} to establish the
fundamental properties of pushforwards of $p-$rough paths. In particular, we
prove the functorial property of the pushfowards. In \autoref{thm:assoc} we
demonstrate the associativity of the rough integral, i.e.\ the rigorous formulation of the heuristic statement that \say{we can substitute differentials}. \autoref{thm:pushPull}
establishes a pushforward-pullback adjunction of controlled paths and
their reference rough paths under the rough integral pairing. Finally, combining these results in \autoref{thm:changeVarRDEs} we obtain a change of variable result for the solutions of
rough differential equations (RDEs). All proofs separate algebra and
analysis, in that they do not require approximation by lifts of smooth paths.
In \autoref{sec:mfds} we study applications to rough paths on manifolds. The
structure of pushforwards of rough path gives rise to a natural notion of
rough paths on manifolds defined locally using the chart functions (c.f.\
\cite{BL15}). The change of variable
result for RDEs in \autoref{thm:changeVarRDEs} motivates a definition of
RDEs on manifolds. We clarify some of the relations of the intrinsic rough
paths considered in this paper with existing notions of rough paths that
have been obtained in different geometric settings. The interplay between
controlled paths and the change of their reference path allows us to
describe a natural class of controlled paths with respect to a rough
path on a manifold. We define the rough integral for a suitable class of
integrands controlled by a rough path on a manifold, and give meaning to an RDE driven by a manifold-valued rough path, with solution valued in a second manifold. In \autoref{rem:extrinsic} we show how our definitions directly extend those of \cite{CDL15} in the extrinsic framework.

\newcommand\DrawControl[3]{
	node[#2,circle,fill=#2,inner sep=2pt,label={above:$#1$},label={[black]below:{\footnotesize#3}}] at #1 {}
}

\usetikzlibrary{decorations.pathreplacing}

\section{The shuffle and tensor bialgebras}\label{sec:alg}

We begin with a concise review of bialgebras defined on tensor algebras, for which we refer to \cite[Chapter I]{Man06} \cite[Chapter 2]{Wei18}. Given $n_1,\ldots,n_m \in \mathbb N$ (which may be 0) we define $\sh(n_1,\ldots,n_m)$ to be the subset of the permutation group $\mathfrak S_{n_1 + \ldots + n_m}$ of $(n_1,\ldots,n_m)$-\emph{shuffles}, i.e.\ permutations $\sigma$ with the property that 
\begin{equation}\label{eq:shufflePerm}
	\sigma(n_1 + \ldots + n_{i-1} + 1) < \sigma(n_1 + \ldots + n_{i-1} + 2) < \ldots < \sigma(n_1 + \ldots + n_i)
\end{equation}
for $i = 1,\ldots,m$ (with $n_0 \coloneqq 0$). We will additionally call $\sigma$ an $(n_1,\ldots,n_m)$-\emph{ordered shuffle} if 
\begin{equation}\label{eq:oShufflePerm}
	\sigma(n_1) \leq \sigma(n_1 + n_2) \leq \ldots \leq \sigma(n_1 + \ldots + n_m)
\end{equation}
and we denote the set of these with $\osh(n_1,\ldots,n_m)$. If $n_i = 0$ for some $i$ we have $\sh(n_1,\ldots,n_m) = \sh(n_1,\ldots, \widehat n_i, \ldots,n_m)$, $\osh(n_1,\ldots,n_m) = \osh(n_1,\ldots, \widehat n_i, \ldots,n_m)$ (with $\widehat{\phantom{n}}$ denoting omission).

In this paper the letters $U,V,W,\ldots$ will always be finite-dimensional $\bbR$-vector spaces. Given such a vector space $V$, a permutation $\sigma \in \mathfrak S_n$ induces a linear isomorphism
\begin{equation}
	\sigma_* \colon V^{\otimes n} \to V^{\otimes n}, \quad v_1 \otimes \cdots \otimes v_n \mapsto v_{\sigma(1)} \otimes \cdots \otimes v_{\sigma(n)}.
\end{equation}
If $\{e_i\}_{i \in I}$ is a basis of $V$ we may write $a \in V^{\otimes n}$ uniquely as $a^{(i_1,\ldots,i_n)} e_{i_1} \otimes \cdots \otimes e_{i_n} \eqqcolon a^{\boldi} e_{\boldi}$ where there is a sum on the ordered $n$-tuple of basis indices $\boldi \coloneqq (i_1,\ldots,i_n)$ (following the Einstein convention), and
\[
\sigma_*(a) = a^\boldi e_{\sigma_*\boldi} = a^{\sigma^{-1}_*\boldj} e_{\boldj}
\]
where 
\begin{equation}\label{eq:pushfwdTuple}
	\rho_*\boldi \coloneqq (i_{\rho(1)}, \ldots, i_{\rho(n)}) \text{ for } \rho \in \mathfrak S_n.
\end{equation}
We let $I^\bullet \coloneqq \bigcup_{n \in \mathbb N} I^n$ be the set of $I$-valued tuples; this includes the empty tuple $()$, and we use $I^\bullet_*$ to denote the set of all such non-empty tuples. We will sometimes identify a tuple $(k_1,\ldots,k_n)$ with the corresponding tensor $e_{k_1} \otimes \cdots \otimes e_{k_n}$ according to the chosen basis. Note that $\sigma^{-1}_* \boldi$ is the tuple obtained by \say{permuting $\boldi$ according to $\sigma$}, e.g.\ if 
\[
\sigma = \begin{pmatrix} 1 & 2 & 3 & 4 & 5 \\ 1 & 3 & 5 & 2 & 4
\end{pmatrix} \in \sh(3,2), \quad \sigma^{-1} = \begin{pmatrix} 1 & 2 & 3 & 4 & 5 \\ 1 & 4 & 2 & 5 & 3
\end{pmatrix} \in \mathfrak S_5
\]
then
\[
\sigma_*^{-1}(i_1,i_2,i_3,i_4,i_5) = (i_1,i_4,i_2,i_5,i_3).
\]
Note that the composition rule (both for tensors and tuples) is 
\begin{equation}\label{eq:compRule}
	(\sigma \circ \rho)_* = \rho_* \sigma_*.
\end{equation}
Indeed, denoting $w_k \coloneqq v_{\sigma(k)}$ we have
\begin{align*}
	(\sigma \circ \rho)_*(v_1 \otimes \cdots \otimes v_n) &= v_{\sigma(\rho(1))} \otimes \cdots \otimes v_{\sigma(\rho(n))} \\
	&= w_{\rho(1)} \otimes \cdots \otimes w_{\rho(n)} \\
	&= \rho_*(w_1 \otimes \cdots \otimes w_n) \\
	&= \rho_* \sigma_* (v_1 \otimes \cdots \otimes v_n).
\end{align*}
For a tuple $\boldi = (i_1, \ldots, i_n)$ we will denote $|\boldi| \coloneqq n$ its length, and given two tuples $\boldi, \boldj$ we write $\boldi \boldj$ for their concatenation. We will denote $T(V) \coloneqq \bigoplus_{n = 0}^\infty V^{\otimes n}$ and for $a \in T(V)$, with $a^n$ its projection onto $V^{\otimes n}$; this has a distinct meaning to the notation $a^{\boldi}$ for tuples $\boldi \in I^\bullet$, explained above. When we are considering the tensor products of the dual $V^*$ of a vector space $V$, or more generally the space of linear maps $\mathcal L(V,W)$ from $V$ to another vector space $W$, we will replace superscripts with subscripts and vice-versa.

We denote by $(T(V),\otimes, \Delta_{\shuffle})$ the tensor bialgebra of $V$, i.e.\ the product is given by the ordinary tensor product, which in coordinates reads
\begin{equation}
	(a \otimes b)^{\boldk} = \sum_{\boldi \boldj = \boldk} a^{\boldi}b^{\boldj}
\end{equation}
and the \emph{shuffle coproduct} is defined on elementary tensors (and extended linearly) as
\begin{equation}\label{eq:Dshuf}
	\begin{split}
		\Dshuffle \colon T(V) &\to T(V) \boxtimes T(V) \\ v_1 \otimes \cdots \otimes v_n &\mapsto \sum_{\substack{k = 0,\ldots, n \\ \sigma \in \sh(k,n-k)}} (v_{\sigma(1)} \otimes \cdots \otimes v_{\sigma(k)}) \boxtimes (v_{\sigma(k+1)} \otimes \cdots \otimes v_{\sigma(n)}).
	\end{split}
\end{equation}
Here use the symbol $\boxtimes$ to denote external tensor product, reserving $\otimes$ for the algebra product. In coordinates this reads, for $a \in T(V)$
\begin{equation}\label{eq:DshuffleCoords}
	(\Dshuffle a)^{\boldi, \boldj} = \sum_{\substack{\sigma \in \sh(|\boldi|,|\boldj|) \\ \boldk = \sigma^{-1}_*(\boldi \boldj)} } a^{\boldk} \eqqcolon \sum_{\boldk \in \sh(\boldi, \boldj)} a^{\boldk}
\end{equation}
where $\boldi$ denotes the index of the first $\boxtimes$-factor and $\boldj$ that of the second. In order to give a precise meaning to $\sh(\boldi, \boldj)$ we must introduce multiset notation. Recall that a multiset is like a set (in that the order of its elements is not taken into account), but with the difference that the same element may appear more than once; we will denote multisets with double braces, e.g.\ $\{\!\{1,2,2,2,3,3\}\!\}$. If $A$ and $B$ are multisets we write $A \subseteq B$ if each element of $A$ belongs to $B$ counted with its multiplicity, e.g.\ $\{\!\{2,2,3,3\}\!\} \subseteq \{\!\{1,2,2,2,3,3\}\!\}$ but $\{\!\{1,1,2,2,3,3\}\!\} \not\subseteq \{\!\{1,2,2,2,3,3\}\!\}$, and $A = B$ is defined to mean $A \subseteq B$ and $B \subseteq A$. With this in mind, we are defining
\begin{equation}
	\sh(\boldi, \boldj) \coloneqq \{\!\{ \sigma^{-1}_*(\boldi \boldj) \mid \sigma \in \sh(|\boldi|,|\boldj|)\}\!\}.
\end{equation}
This means that the tuple $\boldk$ appears as many times as there are $\sigma$'s with the property that $\boldk = \sigma^{-1}_*(\boldi \boldj)$. Similar multisets will be defined without explicit mention from now on. 

The bialgebra that is graded dual to $(T(V),\otimes, \Delta_{\shuffle})$ is given by $(T(V^*),\shuffle,\Delta_\otimes)$. Note that we are using the notion of graded duality for bialgebras (see \cite[\S 1.5]{Foi13}), which is different to ordinary duality: in a nutshell, this just means that we are taking the dual of each (finite-dimensional) direct summand and that the product (coproduct) in one bialgebra is the dual to the product (coproduct) in the other; \say{dual} here makes sense because products and coproducts respect the grading. This allows us to avoid considering formal series of tensors, which are unnecessary when considering rough paths without their full signatures, and retain most of the usual properties of duality. $\shuffle = \Dshuffle^*$ is the \emph{shuffle product} given by
\begin{equation}\label{eq:shuf}
	\begin{split}
		\shuffle \colon T(V^*) \boxtimes T(V^*) &\to T(V^*), \\ (v^1 \otimes \cdots \otimes v^n) \boxtimes (v^{n+1} \otimes \cdots \otimes v^{n+m}) &\mapsto \sum_{\sigma \in \sh(n,m)}v^{\sigma^{-1}(1)} \otimes \cdots \otimes v^{\sigma^{-1}(n+m)}
	\end{split}
\end{equation}
which in coordinates (using subscripts, since we are working in $V^*$) reads
\begin{equation}
	(a \shuffle b)_\boldk = \sum_{\substack{|\boldi| = 0,\ldots, |\boldk| \\ \sigma \in \sh(|\boldi|, |\boldk| - |\boldi|) \\ \boldi \boldj = \sigma_*\boldk}} a_\boldi b_\boldj \eqqcolon \sum_{(\boldi, \boldj) \in \sh^{-1}(\boldk)} a_\boldi b_\boldj.
\end{equation}
Note that with this notation $(\boldi, \boldj) \in \sh^{-1}(\boldk) \Leftrightarrow \boldk \in \sh(\boldi\boldj)$; in particular $\boldi$ or $\boldj$ may be empty. The coproduct $\Dtensor = \otimes^*$ is the \emph{deconcatenation coproduct} given by
\begin{equation}
	\begin{split}
		\Dtensor \colon T(V^*) &\to T(V^*) \boxtimes T(V^*) \\ v^1 \otimes \cdots \otimes v^n &\mapsto \sum_{k=0}^n (v^1 \otimes \cdots \otimes v^k) \boxtimes (v^{k+1} \otimes \cdots \otimes v^n)
	\end{split}
\end{equation}
or in coordinates
\begin{equation}
	(\Dtensor a)_{\boldi, \boldj} = a_{\boldi \boldj}.
\end{equation}
Recall that in every bialgebra with coproduct $\Delta$ we may define its reduced coproduct $\widetilde \Delta a \coloneqq \Delta a - a \boxtimes 1 - 1 \boxtimes a$, which is also coassociative. Also recall that in a coalgebra $(C,\Delta)$ (for us $C$ will always be a tensor algebra) the (reduced) coproduct can be iterated, as coassociativity guarantees that $\Delta^m \colon C \to C^{\boxtimes m}$ has a unique meaning (note that under this convention $\Delta^2 = \Delta$, $\Delta^1 \coloneqq \mathbbm 1_C$, $\Delta^0 \coloneqq 1_\mathbb{R}$). Since the above bialgebras are graded and connected the reduced (iterated) coproduct factors as 
\begin{equation}
	\widetilde \Delta^m = \pi_{\geq 1}^{\boxtimes m} \circ \Delta^m
\end{equation}
where $\pi_{\geq 1} \colon T(V) \twoheadrightarrow \bigoplus_{n \geq 1} V^{\otimes n}$ is the projection onto tensor products of positive order.

We may define the \emph{ordered shuffle coproduct} $\Delta_{\oshuffle}$ and the \emph{ordered shuffle product} $\oshuffle$ by requiring shuffles in \eqref{eq:Dshuf} and \eqref{eq:shuf} to be ordered. This does not, in fact, define a real (co)product, because $\oshuffle$ fails to be associative: indeed, it satisfies the alternative relation
\[
a \oshuffle (b \oshuffle c) = (a \oshuffle b + b \oshuffle a) \oshuffle c.
\]
This property makes $(T(V), \oshuffle)$ a Zinbiel algebra \cite{E-FF15}. Whenever we iterate $\oshuffle$ or $\DOshuffle$ we will be carrying out composition from left to right, i.e.\ inductively
\begin{equation}
	\begin{split}
		a_1 \oshuffle \cdots \oshuffle a_n &\coloneqq (a_1 \oshuffle a_2) \oshuffle a_3 \oshuffle \cdots \oshuffle a_n \\
		\DOshuffle^m a &\coloneqq \sum_{(a)^m_{\oshuffle}} a_{(1)} \boxtimes \cdots \boxtimes a_{(m)} \\
		&\coloneqq \sum_{(a)_{\oshuffle}^{m-1}} \Big( \DOshuffle a_{(1)} \Big) \boxtimes a_{(2)} \boxtimes \cdots \boxtimes a_{(m-1)}.
	\end{split}
\end{equation}
This guarantees that the coordinate expression provided for the unordered shuffle carries over to the ordered case, with $\osh$ instead of $\sh$, e.g.\
\begin{equation}\label{eq:wDOshuffleCoords}
	\begin{split}
		(\wDOshuffle^m a)^{\boldk^1,\ldots,\boldk^m} &= [|\boldk^1|,\ldots,|\boldk^m| \geq 1] \sum_{\substack{\sigma \in \osh(|\boldk^1|,\ldots,|\boldk^m|) \\ \boldk = \sigma^{-1}_*(\boldk^1\ldots\boldk^m)} } a^{\boldk} \\
		&\eqqcolon [|\boldk^1|,\ldots,|\boldk^m| \geq 1]\sum_{\boldk \in \osh(\boldk^1,\ldots, \boldk^m)} a^{\boldk}.
	\end{split}
\end{equation}
Here the square bracket has binary value depending on the truth value of the proposition it contains, and is present because the coproduct is reduced; the set of permutations over which the sum is taken is given by ordered shuffles, reflecting the fact that we are dealing with the ordered shuffle coproduct.

Before proceeding, we take a moment to motivate our interest in shuffles and ordered shuffles, although this will become much clearer in \autoref{sec:grps}. While it is well-known that the former are used to express products of iterated integrals of a (smooth) path $X$
\begin{align*}
	&\mathrel{\phantom{=}}\bigg( \int_{s < u_1 < \ldots < u_n < t} \dif X^{i_1}_{u_1} \cdots \dif X^{i_m}_{u_m} \bigg) \bigg( \int_{s < v_1 < \ldots < v_n < t} \dif X^{j_1}_{v_1} \cdots \dif X^{j_n}_{v_n} \bigg) \\
	&= \sum_{\boldk \in \sh(\boldi, \boldj)} \int_{s < r_1 < \ldots < r_{n+m} < t} \dif X^{k_1}_{r_1} \cdots \dif X^{k_{m+n}}_{r_{m+n}},
\end{align*}
the role of ordered shuffles in the study of iterated integrals of paths is less appreciated. One way to motivate their significance is as follows: let $Y$ be the solution to the ODE
\[
\dif Y = V(Y) \dif X = V(Y)\dot X \dif t.
\]
with $X$ $V$-valued and $Y$ $W$-valued. We fix bases on both vector spaces and use Greek indices for $V$ and Latin ones for $W$ - this will be the convention later on as well. Substituting formal Euler expansions, and defining $V_{\boldc}^k(y) \coloneqq V_{\gamma_1}\cdots V_{\gamma_n}^k(y)$ for a tuple $\boldc = (\gamma_1, \cdots, \gamma_n)$, with the product denoting iterated composition of vector fields (i.e.\ $V_\gamma f(y) \coloneqq \partial_k f(y) V^k_\gamma(y)$ for a function $f$ of $y$)
\begin{align}\label{eq:liftMotivation}
	&\mathrel{\phantom{=}}\int_{s < u_1 < \ldots < u_m < t} \dif Y_{u_1}^{k_1} \cdots \dif Y_{u_m}^{k_m} \notag \\
	&= \int_{s < u_1 < \ldots < u_m < t} \dif \Big( V_{\boldc^1}^{k_1}(Y_s) \bfX^{\boldc^1}_{su_1} \Big) \cdots \dif \Big( V_{\boldc^m}^{k_m}(Y_s) \bfX^{\boldc^m}_{su_m} \Big) \notag \\
	&=V_{\boldc^1}^{k_1}(Y_s) \cdots  V_{\boldc^m}^{k_m}(Y_s)\int_{s < u_1 < \ldots < u_m < t} \dif({\textstyle \int_{s < r^1_1 < \ldots < r^1_{n_{\scaleto{1}{2.5pt}}} < u_n}}\dif X_{r^1_1}^{\gamma^1_1}\cdots \dif X_{r^1_{n_{\scaleto{1}{2.5pt}}}}^{\gamma^1_{n_{\scaleto{1}{2.5pt}}}})\cdots \notag \\
	&\mathrel{\phantom{=}}  \cdots \dif({\textstyle \int_{s < r^m_1 < \ldots < r^m_{n_{\scaleto{m}{1.8pt}}} < u_m}}\dif X_{r^m_1}^{\gamma^m_1}\cdots \dif X_{r^m_{n_{\scaleto{m}{1.8pt}}}}^{\gamma^m_{n_{\scaleto{m}{1.8pt}}}}) \notag \\
	&=  V_{\boldc^1}^{k_1}(Y_s) \cdots  V_{\boldc^m}^{k_m}(Y_s) \\
	&\mathrel{\phantom{=}} \cdot \int_{\substack{s < r^1_1 < \ldots < r^1_{n_{\scaleto{1}{2.5pt}} - 1} < r^1_{n_{\scaleto{1}{2.5pt}}}  \\ \vdots \\ s < r^m_1 < \ldots < r^m_{n_{\scaleto{m}{1.8pt}} - 1} < r^m_{n_{\scaleto{m}{1.8pt}}} \\ s < r^1_{n_{\scaleto{1}{2.5pt}}} < \ldots < r^m_{n_{\scaleto{m}{1.8pt}}} < t}} \dif X_{r^1_1}^{\gamma^1_1}\cdots \dif X_{r^1_{n_{\scaleto{1}{2.5pt}}}}^{\gamma^1_{n_{\scaleto{1}{2.5pt}}}} \cdots \ \cdots \dif X_{r^m_1}^{\gamma^m_{n_{\scaleto{1}{2.5pt}}}}\cdots \dif X_{r^m_{n_{\scaleto{m}{1.8pt}}}}^{\gamma^m_{n_{\scaleto{m}{1.8pt}}}}\notag \\
	&= \sum_{\boldc^1,\ldots,\boldc^m} V_{\boldc^1}^k(Y_s) \cdots  V_{\boldc^m}^k(Y_s)\sum_{\boldc \in \osh(\boldc^1,\ldots,\boldc^m)} \int_{s < v_1 < \ldots < v_{n \coloneqq \sum_i \! n_{\scaleto{i}{2.5pt}}}} \dif X^{\gamma_1}_{v_1} \cdots \dif X^{\gamma_n}_{v_n}. \notag
\end{align}
In other words, ordered shuffles index the sum involved in the expression of the iterated integrals of $Y$ in terms of those of $X$. The relevance of ordered shuffles in the similar cases of linear RDEs and Lip$(\gamma-1)$ functions was observed in \cite[p.72-75]{LCL07}.\\

We will now prove a few results that will be used in \autoref{sec:grps}. These will be stated in terms of tuples (although they are essentially statements about shuffles), so they can be readily deployed when dealing with rough paths. Unshuffling and concatenating satisfy the following commutativity relation:
\[
\begin{tikzcd}[column sep = huge]
	T(V) \arrow[r,"\Dshuffle^m"] \arrow[d, leftarrow, "\otimes"] &T(V)^{\boxtimes m} \arrow[d, leftarrow,"\otimes_{1,m+1} \boxtimes \cdots \boxtimes \otimes_{m,2m}"] \\
	T(V)^{\boxtimes 2} \arrow[r,"\Dshuffle^m \boxtimes \Dshuffle^m"] &T(V)^{\boxtimes 2m}
\end{tikzcd}.
\]
The following lemma can be viewed as the counterpart to this statement in the context of ordered shuffles. Since it is essentially combinatorial in nature, we will state it in terms of tuples. Recall that $I^\bullet_*$ denotes the set of all $I$-valued tuples of any positive order.
\begin{lem}\label{lem:orderedCompat} Let $\boldk^1,\ldots,\boldk^m \in I^\bullet_*$. The following identity of multisets holds:
	\begin{equation}\label{eq:orderedCompat}
		\begin{split}
			&\mathrel{\phantom{=}}\{\!\{(\boldi, \boldj) \in I^\bullet \times I^\bullet \mid \boldi \boldj \in \overline{\emph{Sh}}(\boldk^1,\ldots,\boldk^m)  \}\!\} \\
			&= \{\!\{(\boldi, \boldj) \in I^\bullet \times I^\bullet \mid \boldj \in \overline{\emph{Sh}}(\boldj^{l+1},\ldots,\boldj^m);\\
			&\mathrel{\phantom{=}}\boldi \in \emph{Sh}(\overline{\emph{Sh}}(\boldi^1,\ldots,\boldi^l),\emph{Sh}(\boldi^{l+1},\ldots,\boldi^m)); \\
			&\mathrel{\phantom{=}}\emph{where }\boldi^h = \boldk^h \emph{ for } h \leq l;\\
			&\mathrel{\phantom{=}} \boldi^h \boldj^h = \boldk^h \emph{ and } |\boldj^h| \geq 1, \emph{ for } h \geq l+1; \\
			&\mathrel{\phantom{=}}\emph{with } l = 0,\ldots,m \}\!\}.
		\end{split}
	\end{equation}
\end{lem}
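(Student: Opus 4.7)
The plan is to establish the identity by exhibiting an explicit bijection of multisets. Both sides will be parametrised by pairs $(\sigma, p)$ where $\sigma \in \osh(|\boldk^1|, \ldots, |\boldk^m|)$ and $p \in \{0, 1, \ldots, N\}$ with $N := \sum_h |\boldk^h|$; to such a pair I associate the element $(\boldi, \boldj)$ of the LHS obtained by splitting $\sigma_*^{-1}(\boldk^1 \cdots \boldk^m)$ at position $p$.

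For the forward direction, given $(\sigma, p)$ I set $p_h := \sigma(|\boldk^1| + \cdots + |\boldk^h|)$, the position in the shuffled tuple of the last element of block $h$. The ordered shuffle condition on $\sigma$ gives $p_1 < p_2 < \cdots < p_m$, so there is a unique $l \in \{0, \ldots, m\}$ with $p_l \leq p < p_{l+1}$ (setting $p_0 := 0$, $p_{m+1} := \infty$). Because $\sigma$ is increasing on each block, the portion of block $h$ landing in $\boldi$ is an initial segment $\boldi^h$ of $\boldk^h$: for $h \leq l$ this is all of $\boldk^h$, while for $h \geq l+1$ we obtain a proper splitting $\boldk^h = \boldi^h \boldj^h$ with $|\boldj^h| \geq 1$. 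Restricting $\sigma$ to positions mapped into $\boldj$ exhibits $\boldj \in \osh(\boldj^{l+1}, \ldots, \boldj^m)$, since the positions $p_{l+1} < \cdots < p_m$ of the last elements remain ordered in $\boldj$. Restricting to positions mapped into $\boldi$ exhibits $\boldi$ as a shuffle of the sub-blocks $\boldk^1, \ldots, \boldk^l, \boldi^{l+1}, \ldots, \boldi^m$, in which the first $l$ are ordered-shuffled (their last elements occupy positions $p_1 < \cdots < p_l$ in $\boldi$), while the prefixes $\boldi^{l+1}, \ldots, \boldi^m$ carry no ordering constraint, since their trailing elements sit at intermediate positions of the original blocks and are not among the positions singled out by the ordered shuffle condition. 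This gives exactly the $\sh(\osh(\boldk^1, \ldots, \boldk^l), \sh(\boldi^{l+1}, \ldots, \boldi^m))$ structure on $\boldi$ claimed by the RHS.

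For the reverse direction, from RHS data $(l, \{\boldi^h, \boldj^h\}_h, \tau, \rho)$ — with $\tau$ an ordered shuffle witnessing $\boldj$ and $\rho$ an outer shuffle of an ordered shuffle with an ordinary shuffle witnessing $\boldi$ — I reconstruct $\sigma$ and $p$ by setting $p := |\boldi|$ and reading $\sigma$ off the position data provided by $\tau$ and $\rho$. The only nontrivial verification is that the reconstructed $\sigma$ satisfies the ordered shuffle condition $p_1 < \cdots < p_m$: this splits as $p_1 < \cdots < p_l \leq p$ (inherited from the ordered-shuffle component of $\rho$) and $p < p_{l+1} < \cdots < p_m$ (inherited from $\tau$, together with the requirement $|\boldj^h| \geq 1$ which ensures the last element of $\boldk^h$ lies in $\boldj$ for $h \geq l+1$).

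The main obstacle is the multiplicity bookkeeping: one must verify that the forward and reverse maps are mutually inverse, so that the multisets agree with correct counts, including the edge cases $l = 0$ and $l = m$ where one family of blocks becomes empty. I expect this to reduce to a routine check once the parametrisation above is set up, since the forward map defines each RHS ingredient as a literal restriction of $\sigma$ to an explicit subset of positions, so the reconstructed $\sigma$ on the reverse side manifestly agrees with the original.
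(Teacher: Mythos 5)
Your proposal is correct and follows essentially the same route as the paper's proof: both parametrise the left-hand multiset by the underlying ordered shuffle $\sigma$ together with the split point $p=|\boldi|$, determine $l$ from where $p$ falls among the images $\sigma(|\boldk^1|+\cdots+|\boldk^h|)$ of the terminal elements, obtain the splittings $\boldk^h=\boldi^h\boldj^h$ from the fact that $\sigma$ is increasing on each block, and read off the required (ordered) shuffle structures as restrictions of $\sigma$, with the bijectivity of this correspondence handling the multiplicities. The only difference is cosmetic — you make the parametrisation by $(\sigma,p)$ explicit up front, which the paper leaves implicit.
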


The following picture is meant to illustrate the idea of the statement: the horizontal lines represent tuples, and the red bullet points represent their terminal elements.

\begin{center}
	\begin{tikzpicture}
		\tikzset{
			position label/.style={
				below = 3pt,
				text height = 2ex,
				text depth = 1ex
			}
		}
		\tikzset{
			position label/.style={
				below = 3pt,
				text height = 1.5ex,
				text depth = 1ex
			},
			brace/.style={
				decoration={brace, mirror},
				decorate
			}
		}
		
		\tikzset{
			position label/.style={
				below = 3pt,
				text height = 1.5ex,
				text depth = 1ex
			},
			braceup/.style={
				decoration={brace},
				decorate
			}
		}
		
		\node (x01) at (-1,0){};
		\node (x02) at (5,0){};
		\node (x11) at (-1,-0.5){};
		\node (x12) at (5,-0.5){};
		\node (x21) at (-2,-1){};
		\node (x22) at (6.5,-1){};
		\node (x31) at (-3,-1.5){};
		\node (x32) at (8,-1.5){};
		\node (x41) at (2,-2){};
		\node (x42) at (10.5,-2){};
		\node (vline1) at (7.25,0){};
		\node (vline2) at (7.25,-6){};
		\node (ibracket1) at (2,-2.2){};
		\node (ibracket2) at (7.25,-2.2){};
		\node (jbracket1) at (7.25,-2.2){};
		\node (jbracket2) at (10.5,-2.2){};
		
		\node (x51) at (-3,-5.5){};
		\node (x52) at (5,-5.5){};
		\node (x53) at (6.5,-5.5){};
		\node (x54) at (8,-5.5){};
		\node (x55) at (10.5,-5.5){};
		
		\node (arrow1) at (7.5,-3){};
		\node (arrow2) at (7.5,-4.7){};

		\draw [braceup] (7.5,-0.3) -- (7.5,-1.2) node [position label, pos=0.5, yshift=2.5ex, xshift=2.5ex] {$l$};

		\draw[black, thick] (x11) -- (x12);
		\filldraw[red] (4.9,-0.5) circle (2pt) node[anchor=north]{};
		\draw [braceup] (-0.9,-0.3) -- (4.9,-0.3) node [position label, pos=0.5, yshift=5ex] {$\boldi^h=\boldk^h$};
		
		\draw[black, thick] (x21) -- (x22);
		\filldraw[red] (6.4,-1) circle (2pt) node[anchor=north]{};
		
		\draw[black, thick] (x31) -- (x32);
		\filldraw[red] (7.9,-1.5) circle (2pt) node[anchor=north]{};
		\draw[black, thick] (x41) -- (x42);
		\filldraw[red] (10.4,-2) circle (2pt) node[anchor=north]{};
		
		\draw[gray, thick] (vline1) -- (vline2);
		
		\draw [brace] (ibracket1) -- (ibracket2) node [position label, pos=0.5]{$\boldi^h$};
		\draw [brace] (jbracket1) -- (jbracket2) node [position label, pos=0.5] {$\boldj^h$};

		\draw[black, thick] (x51) -- (x55);
		
		\filldraw[red] (4.9,-5.5) circle (2pt) node[anchor=north]{};
		\filldraw[red] (6.4,-5.5) circle (2pt) node[anchor=north]{};
		\filldraw[red] (7.9,-5.5) circle (2pt) node[anchor=north]{};
		\filldraw[red] (10.4,-5.5) circle (2pt) node[anchor=north]{};
		
		\draw [->] (arrow1) -- (arrow2) node [position label, pos=0.5 , yshift=2.5ex, xshift=2.5ex] {$\overline\shuffle$};

	\end{tikzpicture}
\end{center}

Note that we are taking into account multiplicities in the multiset $\boldk \in \overline{\text{Sh}}(\boldk^1,\ldots,\boldk^m)$, i.e.\ if the same tuple $\boldk$ belongs twice to $\overline{\text{Sh}}(\boldk^1,\ldots,\boldk^m)$, then any pair $(\boldi, \boldj)$ such that $\boldi \boldj = \boldk$ appears twice in the multiset; an analogous remark holds for the right hand side and for all similarly defined multisets. $\mathrm{Sh}(\overline{\mathrm{Sh}}(\boldi^1,\ldots,\boldi^l),\mathrm{Sh}(\boldi^{l+1},\ldots,\boldi^m))$ here stands for \[
\{\!\{ \boldi \in \mathrm{Sh}(\boldsymbol a, \boldsymbol b) \mid \boldsymbol a \in \overline{\mathrm{Sh}}(\boldi^1,\ldots,\boldi^l),\ \boldsymbol b \in \mathrm{Sh}(\boldi^{l+1},\ldots,\boldi^m) \}\!\},
\]
the multiset of tuples obtained by shuffling $\boldi^1,\ldots,\boldi^m$, the first $l$ with order. When $l = 0$ or $m$ this reduces respectively to $\mathrm{Sh}(\mathrm{Sh}(\boldi^{1},\ldots,\boldi^m)) = \mathrm{Sh}(\boldi^{1},\ldots,\boldi^m)$, $\mathrm{Sh}(\overline{\mathrm{Sh}}(\boldi^1,\ldots,\boldi^m)) = \overline{\mathrm{Sh}}(\boldi^1,\ldots,\boldi^m)$, since the only possible way of shuffling a single is to leave it unchanged. The proof of this lemma is most easily understood when going through its steps with reference to the example that immediately follows it.

\begin{proof}[Proof of \autoref{lem:orderedCompat}]
	Let $A(\boldk^1,\ldots,\boldk^m)$ denote the first multiset defined above and $B(\boldk^1,\ldots,\boldk^m)$ the second. For tuples $\boldsymbol \ell^1,\ldots,\boldsymbol\ell^n$ and $1 \leq a \leq b \leq n$ define $\boldsymbol \ell^{a:b} \coloneqq \boldsymbol \ell^a\ldots\boldsymbol\ell^b \in I^\bullet$ (juxtaposition) and for a tuple $\boldsymbol \ell$ and $1 < c \leq d \leq |\boldsymbol\ell|$ $\boldsymbol \ell_{c:d} \coloneqq (\ell_c,\ldots,\ell_d)$, and let $(\boldi, \boldj) \in A(\boldk^1,\ldots,\boldk^m)$. This means there exists $\sigma \in \osh(|\boldk^1|,\ldots,|\boldk^m|)$ with $\boldi\boldj = \sigma^{-1}_*\boldk^{1:m}$. Let 
	\begin{equation*}
		l \coloneqq \begin{cases}
			0 &\text{if } |\boldi| < \sigma(|\boldk^1|) \\
			m &\text{if } \boldj = () \\
			\text{s.t.\ } \sigma(|\boldk^{1:l}|) \leq |\boldi| < \sigma(|\boldk^{1:l+1}|) &\text{otherwise}
		\end{cases}
	\end{equation*}
	which is exists and is unique since $\sigma$ is an ordered shuffle. We then let $\boldi^h \coloneqq \boldk^h$ for $h \leq l$ and for $h \geq l+1$
	\begin{align}\label{eq:ijkhSigma}
		\boldi^h \boldj^h \coloneqq \boldk^h, \quad  \sigma(|\boldk^{1:h-1}| + |\boldi^h|) \leq |\boldi| < \sigma(|\boldk^{1:h-1}| + |\boldi^h| + 1)
	\end{align}
	where $|\boldi^h|$ (and hence $\boldi^h$) is unique since $\sigma$ is a shuffle. Now, it cannot be the case that for $h \geq l + 1$ we have $|\boldi^h| = |\boldk^h|$, for this would violate the definition of $l$: this implies $|\boldj^h| \geq 1$. Moreover, we have $\boldi = \sigma^{-1}_*\boldi^{1:m}$ and $\boldj = \sigma^{-1}_* \boldj^{l+1:m}$, where we are defining the right hand sides using the same expression as before \eqref{eq:pushfwdTuple}, but by considering the numberings on $\boldi^{1:m}$, $\boldj^{l+1:m}$ to be those inherited as subtuples of $\boldk^{1:m}$: this is because $\boldi^h$ occupies the segment of $\boldk^{1:m}$ numbered with $[|\boldk^{1:h-1}| + 1, |\boldk^{1:h-1}| + |\boldi^h|]$, all of which $\sigma$ maps into $[1,|\boldi|]$, by \eqref{eq:ijkhSigma} and again by the shuffle property of $\sigma$; similarly, $\boldj^h$ occupies the segment numbered $[|\boldk^{1:h-1}| + |\boldi^h|+1, |\boldk^{1:h}|]$ which gets mapped above $|\boldi|$. By construction $\sigma$ (once domains are renumbered) shuffles $\boldi^{1:l}$ and $\boldj^{l+1:m}$ with order, since these are the tuples that contain the $k^h_{|\boldk^h|}$'s, and $\boldi^{l+1,m}$ without order. If $\rho \in \mathrm{Sh}(n_1,\ldots,n_m)$ and $S \subseteq \{1,\ldots,n_1 + \ldots + n_m\}$, $\rho|_S$ is still a shuffle, with the additional order constraints on those $(n_1 + \ldots + n_q)$'s that belong to $S$: therefore, we have that $\boldj \in \overline{\mathrm{Sh}}(\boldj^{l+1},\ldots,\boldj^m)$ and $\boldi \in \mathrm{Sh}(\overline{\mathrm{Sh}}(\boldi^1,\ldots,\boldi^l),\mathrm{Sh}(\boldi^{l+1},\ldots,\boldi^m))$. This shows $A(\boldk^1,\ldots,\boldk^m) \subseteq B(\boldk^1,\ldots,\boldk^m)$.
	
	Conversely, let $(\boldi, \boldj) \in B(\boldk^1,\ldots,\boldk^m)$, with $l, \boldi^h, \boldj^h$ as in \eqref{eq:orderedCompat}. $\boldi\boldj$ is obtained by an ordered shuffle of $\boldk^1,\ldots,\boldk^m$: that the order of each $\boldk^h$, $h \leq l$ is preserved is immediate since $\boldi^h = \boldk^h$; that the order of each $\boldk^h$, $h > l$ is preserved is a consequence of the fact that the order of $\boldi^h$ is preserved, that the order of $\boldj^h$ is preserved, and that $\boldi$ comes before $\boldj$ in the juxtaposition $\boldi \boldj$; that the shuffle of the $\boldk^h$'s is ordered is a consequence of the fact that the shuffles of $\boldi^1,\ldots,\boldi^l$ and $\boldj^{l+1},\ldots,\boldj^{l+1}$ are ordered. This shows $B(\boldk^1,\ldots,\boldk^m) \subseteq A(\boldk^1,\ldots,\boldk^m)$; also note that in both inclusions multiplicities are indeed counted, since the correspondence between the underlying permutation in $A(\boldk^1,\ldots,\boldk^m)$ and the pair of underlying permutations in $B(\boldk^1,\ldots,\boldk^m)$ is bijective.
\end{proof}

\begin{expl}
	We illustrate the idea behind this lemma with an example. Let
	\begin{align*}&m = 4; \quad |\boldk^1| = 2,\ |\boldk^2| = 3, \ |\boldk^3| = 4, \ |\boldk^4| = 4 \\
		&\sigma = \begin{pmatrix} 1 & 2 & 3 & 4 & 5 & 6 & 7 & 8 & 9 & 10 & 11 & 12 & 13 \\ 4 & 6 & 3 & 5 & 8 & 7 & 9 & 11 & 12 & 1 & 2 & 10 & 13 \end{pmatrix} \in \osh(2,3,4,4) \\
		&\sigma^{-1}_*(\boldk^1 \boldk^2 \boldk^3 \boldk^4) = (\underbrace{k^4_1,k^4_2,k^2_1,k^1_1,k^2_2,{\color{red}k^1_2},k^3_1,{\color{red}k^2_3}}_{\eqqcolon \boldi}, \underbrace{k^3_2,k^4_3,k^3_3,{\color{red}k^3_4},{\color{red}k^4_4}}_{\eqqcolon \boldj}) \\
		& (\boldi, \boldj) \in A(\boldk^1,\boldk^2,\boldk^3,\boldk^4)
	\end{align*}
	where we have coloured in red the terminal elements of the $\boldk^h$'s (and recall that it is necessary to renumber $\boldk^1\boldk^2\boldk^3\boldk^4$ from $1$ to $13$ before applying \eqref{eq:pushfwdTuple}, and then change the numbering back) and we have
	\begin{align*}
		&(\boldi, \boldj) \in B(\boldk^1,\boldk^2,\boldk^3,\boldk^4) \quad \text{with } l = 2;\\
		&\boldi^1 = (k^1_1,k^1_2), \ \boldi^2 = (k^2_1,k^2_2,k^2_3), \ \boldi^3 = (k^3_1), \ \boldi^4 = (k^4_1,k^4_2); \\
		&\boldj^1 = (k^3_2,k^3_3,k^3_4), \ \boldj^2 = (k^4_3,k^4_4)
	\end{align*}
	since $\boldk^h = \boldi^h$ for $h = 1,2$, $\boldk^h = \boldi^h \boldj^h$ for $h = 3,4$, and
	\begin{align*}
		&\boldj \in \osh(\boldj^1,\boldj^2); \quad \boldi \in \sh(\boldsymbol a, \boldsymbol b)\\
		&\text{with } \boldsymbol a \coloneqq (k^2_1,k^1_1,k^2_2,k^1_2,k^2_3) \in \osh(\boldi^1,\boldi^2) \\
		&\phantom{\text{with }} \boldsymbol b \coloneqq (k^4_1,k^4_2,k^3_1) \in \sh(\boldi^3,\boldi^4)
	\end{align*}
	Note that neither of the two $\sh$'s above can be replaced with $\osh$.
\end{expl}
The reduced version of this \autoref{lem:orderedCompat} would involve restricting it to non-empty tuples $\boldk^1,\ldots,\boldk^m$; we will need the dual of this statement. We will use the notation $((\boldi^1,\ldots, \boldi^l),(\boldi^{l+1},\ldots,\boldi^m)) \in (\osh^{-1},\sh^{-1})(\sh^{-1}(\boldi)) $ as a shorthand for $(\boldi^1,\ldots, \boldi^l) \in \osh^{-1}(\boldsymbol a)$, $(\boldi^{l+1},\ldots,\boldi^m) \in \sh^{-1}(\boldsymbol b) : (\boldsymbol a, \boldsymbol b) \in \sh^{-1}(\boldi)$, and $m$ is fixed.
\begin{cor}\label{cor:dualReduced}
	For $I$-valued tuples $\boldi, \boldj$ the following identity of multisets holds:
	\begin{equation}
		\begin{split}
			&\{\!\{(\boldk^1,\ldots,\boldk^m) \in \overline{\emph{Sh}}^{-1}(\boldi\boldj) \mid |\boldk^1|,\ldots,|\boldk^m| \geq 1\}\!\}\\
			=&\{\!\{(\boldk^1,\ldots,\boldk^m) \in (I^\bullet_*)^m \mid (\boldj^{l+1},\ldots,\boldj^m) \in \overline{\emph{Sh}}^{-1}(\boldj); \\
			&((\boldi^1,\ldots, \boldi^l),(\boldi^{l+1},\ldots,\boldi^m)) \in (\overline{\emph{Sh}}^{-1},\emph{Sh}^{-1})(\emph{Sh}^{-1}(\boldi));\\
			&\boldk^h = \boldi^h, \ h \leq l; \ \boldk^h = \boldi^h \boldj^h, \ |\boldj^h| \geq 1, \ h \geq l+1;\\
			&l = 0,\ldots,m \}\!\}.
		\end{split}
	\end{equation}
	\begin{proof}[Proof of \autoref{cor:dualReduced}]
		Let $C(\boldi, \boldj)$ denote the first multiset above and $D(\boldi, \boldj)$ the second, and recall the names $A(\boldk^1,\ldots,\boldk^m)$, $B(\boldk^1,\ldots,\boldk^m)$ for the sets of \autoref{lem:orderedCompat}. We then have (taking into account multiplicities)
		\begin{align*}
			(\boldk^1,\ldots,\boldk^m) \in C(\boldi, \boldj) &\iff (\boldi, \boldj) \in A(\boldk^1,\ldots,\boldk^m) \\
			&\iff (\boldi, \boldj) \in B(\boldk^1,\ldots,\boldk^m)  \\
			&\iff (\boldk^1,\ldots,\boldk^m) \in D(\boldi, \boldj)
		\end{align*}
		thus concluding the proof.
	\end{proof}
\end{cor}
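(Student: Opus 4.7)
The plan is to deduce this corollary directly from \autoref{lem:orderedCompat} by interchanging the roles of fixed and variable arguments: whereas the lemma fixes the tuples $(\boldk^1,\ldots,\boldk^m)$ and describes the multiset of pairs $(\boldi,\boldj)$ produced by ordered shuffling followed by splitting, the corollary fixes $(\boldi,\boldj)$ and describes which $(\boldk^1,\ldots,\boldk^m)$ give rise to it. This is the natural transpose of the lemma, and so the argument should collapse to a short chain of tautological re-readings.

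Concretely, I would introduce names $C(\boldi,\boldj)$ and $D(\boldi,\boldj)$ for the two multisets in the corollary's statement, and retain the notation $A(\boldk^1,\ldots,\boldk^m)$, $B(\boldk^1,\ldots,\boldk^m)$ from the proof of \autoref{lem:orderedCompat}. The first step is to observe the two tautological equivalences
\[
(\boldk^1,\ldots,\boldk^m) \in C(\boldi,\boldj) \iff (\boldi,\boldj) \in A(\boldk^1,\ldots,\boldk^m),
\]
\[
(\boldk^1,\ldots,\boldk^m) \in D(\boldi,\boldj) \iff (\boldi,\boldj) \in B(\boldk^1,\ldots,\boldk^m),
\]
obtained by simply re-reading the defining conditions (the positivity requirement $|\boldk^h|\geq 1$ on the $C,D$ side matches the constraints $\boldk^h=\boldi^h$ for $h\leq l$ and $\boldk^h=\boldi^h\boldj^h$ with $|\boldj^h|\geq 1$ for $h\geq l+1$ on the $A,B$ side). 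Chaining these equivalences with the equality $A(\boldk^1,\ldots,\boldk^m) = B(\boldk^1,\ldots,\boldk^m)$ provided by \autoref{lem:orderedCompat} would then yield $C(\boldi,\boldj)=D(\boldi,\boldj)$.

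The only point requiring care is that the statement is an equality of multisets, not of sets. For this I would invoke the closing observation from the proof of \autoref{lem:orderedCompat}, namely that the correspondence between the ordered shuffle $\sigma \in \osh(|\boldk^1|,\ldots,|\boldk^m|)$ witnessing $(\boldi,\boldj) \in A$ and the splitting-index-plus-shuffles data witnessing $(\boldi,\boldj) \in B$ is bijective; this bijection depends only on the underlying combinatorics and hence descends to the corresponding fibres in $C(\boldi,\boldj)$ and $D(\boldi,\boldj)$. The main (and only) obstacle I anticipate is purely notational: one must unpack the composite set-builders $(\osh^{-1},\sh^{-1})(\sh^{-1}(\boldi))$ and $\osh^{-1}(\boldj)$ in the definition of $D$ and verify termwise that they encode the same information as the membership conditions $\boldi \in \sh(\osh(\boldi^1,\ldots,\boldi^l),\sh(\boldi^{l+1},\ldots,\boldi^m))$ and $\boldj \in \osh(\boldj^{l+1},\ldots,\boldj^m)$ appearing in $B$. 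This is a bookkeeping check, which is why the actual proof reduces to three lines.
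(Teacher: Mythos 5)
Your proposal is correct and is essentially identical to the paper's own proof: the paper likewise introduces $C(\boldi,\boldj)$ and $D(\boldi,\boldj)$, chains the two tautological equivalences with the multiset equality $A=B$ from \autoref{lem:orderedCompat}, and notes that multiplicities are respected via the bijective correspondence of witnessing permutations. Nothing further is needed.
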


Next we discuss another combinatorial relation involving ordered and unordered shuffles; similarly to the earlier lemma, we provide a diagram and an example (unrelated to each other) to help explain the idea. $\bigsqcup$ will denote disjoint union of multisets, e.g.\ if the same tuple appears in sets corresponding to two different $\sigma$'s it should be counted twice.
\begin{lem}\label{lem:shsh}
	Let $n \coloneqq n_1+\ldots+n_m$, $n^l \coloneqq n_1 + \ldots + n_l$ for $l = 1,\ldots, m$, and $\boldk^1,\ldots,\boldk^n \in I^\bullet_*$. We have
	\begin{equation}
		\begin{split}
			&\emph{Sh}(\overline{\emph{Sh}}(\boldk^1,\ldots,\boldk^{n_1}), \ldots, \overline{\emph{Sh}}(\boldk^{n^{m-1}+1},\ldots,\boldk^{n})) \\
			=& \bigsqcup_{\pi \in \emph{Sh}(n_1,\ldots,n_m)} \overline{\emph{Sh}}(\boldk^{\pi^{-1}(1)},\ldots,\boldk^{\pi^{-1}(n)}).
		\end{split}
	\end{equation}
\end{lem}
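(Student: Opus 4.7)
The plan is to show that both multisets equal
\[
\mathcal M \coloneqq \{\!\{\tilde\sigma^{-1}_*(\boldk^1\cdots\boldk^n) \mid \tilde\sigma \in \mathcal S\}\!\},
\]
where $\mathcal S \subseteq \mathrm{Sh}(|\boldk^1|,\ldots,|\boldk^n|)$ is the set of global shuffles with the extra property that, for each $l = 1,\ldots,m$, the images under $\tilde\sigma$ of the terminal positions $|\boldk^{1:n^{l-1}+1}|,\ldots,|\boldk^{1:n^l}|$ of the tuples in group $l$ are in strictly increasing order. Multiplicities will be preserved because the data encoding each side will be in explicit bijection.

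For the LHS, a contribution is specified by an $m$-tuple of ordered shuffles $\sigma_l \in \overline{\mathrm{Sh}}(|\boldk^{n^{l-1}+1}|,\ldots,|\boldk^{n^l}|)$ (one per group) together with an outer unordered shuffle $\tau \in \mathrm{Sh}(N_1,\ldots,N_m)$, where $N_l \coloneqq |\boldk^{n^{l-1}+1}|+\ldots+|\boldk^{n^l}|$. These glue into a single global shuffle $\tilde\sigma$ of $\boldk^1\cdots\boldk^n$; since each $\sigma_l$ already orders the terminal positions within group $l$ and $\tau$ is increasing on each of its $m$ blocks, the global permutation $\tilde\sigma$ lies in $\mathcal S$. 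Conversely, any $\tilde\sigma \in \mathcal S$ decomposes uniquely: $\sigma_l$ is the induced shuffle of the positions of group $l$ (the order constraint on the group-terminal positions turning it into an ordered shuffle), and $\tau$ encodes how the $m$ groups' outputs are interleaved globally.

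For the RHS, a contribution is specified by $\pi \in \mathrm{Sh}(n_1,\ldots,n_m)$ together with $\rho \in \overline{\mathrm{Sh}}(|\boldk^{\pi^{-1}(1)}|,\ldots,|\boldk^{\pi^{-1}(n)}|)$. The action of $\rho$ on the tuple $\boldk^{\pi^{-1}(1)}\cdots\boldk^{\pi^{-1}(n)}$ can be rewritten as the action of a global shuffle $\tilde\sigma$ on $\boldk^1\cdots\boldk^n$ by composing with the block-level permutation induced by $\pi$. The ordered-shuffle property of $\rho$ says that the terminal positions of $\boldk^{\pi^{-1}(1)},\ldots,\boldk^{\pi^{-1}(n)}$ appear globally in order; combined with $\pi$ preserving the within-group order of the block indices, this is exactly the condition $\tilde\sigma \in \mathcal S$. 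Conversely, $\tilde\sigma \in \mathcal S$ determines $\pi$ uniquely as the element of $\mathrm{Sh}(n_1,\ldots,n_m)$ that records the global order of all $n$ terminal positions (well-defined because within each group they are already ordered), and $\rho$ is then the residual permutation, which is automatically an ordered shuffle.

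The main obstacle, rather than any deep combinatorics, is bookkeeping: cleanly translating between permutations of blocks (in $\mathfrak S_n$) and permutations of elements (in $\mathfrak S_{|\boldk^1|+\ldots+|\boldk^n|}$), and verifying that both bijections above preserve the produced tuple and count multiplicities correctly, so that the disjoint union on the right (which sums multiplicities over distinct $\pi$) matches the combined multiplicity on the left coming from variable choices of the $\sigma_l$'s and $\tau$. The whole argument ultimately rests on the trivial observation that an ordered shuffle is exactly an (unordered) shuffle subject to a linear order on the terminal positions of its constituent tuples, which is precisely what is encoded in \eqref{eq:oShufflePerm}.
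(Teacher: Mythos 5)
Your proposal is correct and follows essentially the same route as the paper's proof: your set $\mathcal S$ is precisely the paper's $\bigsqcup_{\pi}\overline{\mathrm{Sh}^\pi}(|\boldk^1|,\ldots,|\boldk^n|)$, your unique gluing/decomposition of $\tilde\sigma$ into the inner ordered shuffles $\sigma_l$ and the outer block shuffle $\tau$ is the paper's unique factorisation $\tau = \sigma\circ(\rho_1,\ldots,\rho_m)$, and your extraction of $\pi$ from the global order of the terminal positions is exactly how the paper defines $\pi$ as the restriction to the set $T$ of terminal indices. The only difference is presentational (you route both sides through an explicit intermediate multiset $\mathcal M$ rather than transforming the left-hand side into the right-hand side), which does not change the argument.
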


\begin{center}
	\begin{tikzpicture}
		\tikzset{
			position label/.style={
				below = 3pt,
				text height = 2ex,
				text depth = 1ex
			}
		}
		\tikzset{
			position label/.style={
				below = 3pt,
				text height = 1.5ex,
				text depth = 1ex
			},
			brace/.style={
				decoration={brace, mirror},
				decorate
			}
		}
		
		\tikzset{
			position label/.style={
				below = 3pt,
				text height = 1.5ex,
				text depth = 1ex
			},
			braceup/.style={
				decoration={brace},
				decorate
			}
		}

		\draw[black, thick] (-2,1.2) -- (-2,3);
		\filldraw[red] (-2,1.2) circle (2pt) node[anchor=north]{};
		\draw[black, thick] (-1.75,0) -- (-1.75,3.25);
		\filldraw[red] (-1.75,0) circle (2pt) node[anchor=north]{};
		\draw[black, thick] (-1.5,-1) -- (-1.5,2.5);
		\filldraw[red] (-1.5,-1) circle (2pt) node[anchor=north]{};
		
		
		\draw[black, thick] (0,9.75) -- (0,5.5) ;
		\filldraw[red] (0,5.5) circle (2pt) node[anchor=north]{};
		\filldraw[red] (0,6.5) circle (2pt) node[anchor=north]{};
		\filldraw[red] (0,7.5) circle (2pt) node[anchor=north]{};
		
		
		\draw[black, thick] (0.5,0.5) -- (0.5,3) ;
		\filldraw[red] (0.5,0.5) circle (2pt) node[anchor=north]{};
		\draw[black, thick] (0.75,-0.5) -- (0.75,2.5) ;
		\filldraw[red] (0.75,-0.5) circle (2pt) node[anchor=north]{};
		\draw[black, thick] (1,-1.4) -- (1,2.75) ;
		\filldraw[red] (1,-1.4) circle (2pt) node[anchor=north]{};
		\draw[black, thick] (1.25,-2.5) -- (1.25,0.5) ;
		\filldraw[red] (1.25,-2.5) circle (2pt) node[anchor=north]{};
		
		\draw[black, thick] (2.75,4) -- (2.75,9.5) ;
		\filldraw[red] (2.75,4) circle (2pt) node[anchor=north]{};
		\filldraw[red] (2.75,5.1) circle (2pt) node[anchor=north]{};
		\filldraw[red] (2.75,6) circle (2pt) node[anchor=north]{};
		\filldraw[red] (2.75,7) circle (2pt) node[anchor=north]{};
		
		\draw[black, thick] (3.25,2.4) -- (3.25,-0.75) ;
		\filldraw[red] (3.25,-0.75) circle (2pt) node[anchor=north]{};
		\draw[black, thick] (3.5,2.9) -- (3.5,-2) ;
		\filldraw[red] (3.5,-2) circle (2pt) node[anchor=north]{};
		
		\draw[black, thick] (5.25,4.5) -- (5.25,9.75) ;
		\filldraw[red] (5.25,4.5) circle (2pt) node[anchor=north]{};
		\filldraw[red] (5.25,5.75) circle (2pt) node[anchor=north]{};
		
		\draw[black, thick] (9,-2.5) -- (9,3) ;
		\filldraw[red] (9,1.2) circle (2pt) node[anchor=north]{};
		\filldraw[red] (9,0) circle (2pt) node[anchor=north]{};
		\filldraw[red] (9,-1) circle (2pt) node[anchor=north]{};
		\filldraw[red] (9,0.5) circle (2pt) node[anchor=north]{};
		\filldraw[red] (9,-0.5) circle (2pt) node[anchor=north]{};
		\filldraw[red] (9,-1.4) circle (2pt) node[anchor=north]{};
		\filldraw[red] (9,-2.5) circle (2pt) node[anchor=north]{};
		\filldraw[red] (9,-0.75) circle (2pt) node[anchor=north]{};
		\filldraw[red] (9,-2) circle (2pt) node[anchor=north]{};

		\draw[black, thick] (-1.75,-4.6) -- (-1.75,-7.75) ;
		\filldraw[red] (-1.75,-7.75) circle (2pt) node[anchor=north]{};
		\draw[black, thick] (-1.5,-4.1) -- (-1.5,-9) ;
		\filldraw[red] (-1.5,-9) circle (2pt) node[anchor=north]{};
		
		\draw[black, thick] (0.5,-5.8) -- (0.5,-4);
		\filldraw[red] (0.5,-5.8) circle (2pt) node[anchor=north]{};
		\draw[black, thick] (0.75,-7) -- (0.75,-3.75);
		\filldraw[red] (0.75,-7) circle (2pt) node[anchor=north]{};
		\draw[black, thick] (1,-8) -- (1,-4.5);
		\filldraw[red] (1,-8) circle (2pt) node[anchor=north]{};
		
		\draw[black, thick] (3.25,-6.5) -- (3.25,-4) ;
		\filldraw[red] (3.25,-6.5) circle (2pt) node[anchor=north]{};
		\draw[black, thick] (3.5,-7.5) -- (3.5,-4.5) ;
		\filldraw[red] (3.5,-7.5) circle (2pt) node[anchor=north]{};
		\draw[black, thick] (3.75,-8.4) -- (3.75,-4.25) ;
		\filldraw[red] (3.75,-8.4) circle (2pt) node[anchor=north]{};
		\draw[black, thick] (4,-9.5) -- (4,-6.5) ;
		\filldraw[red] (4,-9.5) circle (2pt) node[anchor=north]{};

		\draw [->,black,thick] (-1.25,3.5) -- (-0.5,4.75) node [position label, pos=0.5 , yshift=2.5ex, xshift=-2.5ex] {$\overline{\shuffle}$};
		
		\draw [->,black,thick] (1.25,3.5) -- (2,4.75) node [position label, pos=0.5 , yshift=2.5ex, xshift=-2.5ex] {$\overline{\shuffle}$};
		
		\draw [->,black,thick] (3.75,3.5) -- (4.5,4.75) node [position label, pos=0.5 , yshift=2.5ex, xshift=-2.5ex] {$\overline{\shuffle}$};
		
		\draw [->,black,thick] (6,5) -- (8.5,2) node [position label, pos=0.5 , yshift=2.5ex, xshift=2.5ex] {$\sigma$};
		
		\draw [->,black,thick] (4.5,-5) -- (8.5,-2) node [position label, pos=0.5 , yshift=2.5ex, xshift=-2.5ex] {$\overline{\shuffle}$};
		
		\draw[->, thick,blue]
		(-1.75,-1.5)
		.. controls (-1.5,-3)  and (0.25,-1.5) ..
		(0.75,-3.5) ;
		
		\draw[->, thick,blue]
		(3.5,-2.5)
		.. controls (2.5,-4)  and (-0.75,-2) ..
		(-1.75,-3.5)  node [position label, pos=0.5 , yshift=5ex, xshift=-0.5ex] {$\pi$}; ;
		
		\draw[->, thick,blue]
		(1.23,-2.7)
		.. controls (1.73,-4)  and (3,-2.8) ..
		(3.5,-3.7) ;
		
	\end{tikzpicture}
\end{center}

\begin{proof}[Proof of \autoref{lem:shsh}]
	Let $N_l \coloneqq |\boldk^{n^{l-1} + 1}| + \ldots + |\boldk^{n^l}|$ for $l = 1,\ldots,m$, and $N^l \coloneqq N_1 + \ldots + N_l$ for $l = 1,\ldots, m$, and $N \coloneqq N^m$. We have
	\begin{align*}
		&\sh(\osh(\boldk^1,\ldots,\boldk^{n_1}), \ldots, \osh(\boldk^{n^{m-1}+1},\ldots,\boldk^{n})) \\
		=&\{\!\{ \sigma^{-1}_*(\boldh^1\ldots\boldh^m) \mid \sigma \in \sh(N_1,\ldots,N_m);\\ &\boldh^l \in \osh(\boldk^{n^{l-1}+1},\ldots,\boldk^{n^l}), \ l = 1,\ldots,m\}\!\} \\
		=&\{\!\{ \sigma^{-1}_*(\boldh^1\ldots\boldh^m) \mid \sigma \in \sh(N_1,\ldots,N_m);\\
		&\boldh^l = \rho_{l*}^{-1}(\boldk^{n^{l-1}+1} \ldots \boldk^{n^l}); \\
		&\rho_l \in \osh(|\boldk^{n^{l-1} + 1}|, \ldots, |\boldk^{n^l}|), \ l = 1,\ldots,m\}\!\}.
	\end{align*}
	Now, denoting $(\rho_{1}, \ldots,\rho_{m})$ the element of $\mathfrak S_N$ which acts on $\{N^{l-1}+1,\ldots,N^l\}$ with $\rho_l$, we continue the chain of identities
	\begin{align*}
		=&\{\!\{ \sigma^{-1}_*(\rho_{1}, \ldots,\rho_{m})^{-1}_* (\boldk^1\ldots\boldk^{n_m}) \mid \sigma \in \sh(N_1,\ldots,N_m);\\ 
		&\rho_l \in \osh(|\boldk^{n^{l-1}+1}|,\ldots,|\boldk^{n^l}|), \ l = 1,\ldots,m\}\!\}\\
		=&\{\!\{ (\sigma \circ (\rho_{1}, \ldots,\rho_{m}))^{-1}_* (\boldk^1\ldots\boldk^{n_m}) \mid \sigma \in \sh(N_1,\ldots,N_m);\\ 
		&\rho_l \in \osh(|\boldk^{n^{l-1}+1}|,\ldots,|\boldk^{n^l}|), \ l = 1,\ldots,m\}\!\}
	\end{align*}
	since $(\rho_{1}^{-1}, \ldots,\rho_{m}^{-1}) = (\rho_{1}, \ldots,\rho_{m})^{-1}$ and thanks to the composition rule \eqref{eq:compRule}. Let $\pi$ denote the restriction of $\sigma \circ (\rho_{1}, \ldots,\rho_{m})$ to the set 
	\[
	T \coloneqq \{t_1,\ldots,t_n\}, \quad t_l \coloneqq |\boldk^1| +\ldots+ |\boldk^l|.
	\]
	Since the $\boldk^h$'s are all non-empty, $T$ is a subset of $\{1,\ldots,N\}$ of cardinality $n$, so after renumbering it we can consider $\pi$ as an element of $\mathfrak S_n$. Now, since $\rho_l$ is an ordered shuffle, it preserves the ordering of $\{|\boldk^{n^{l-1}+1}|,\ldots,|\boldk^{n^{l-1}+1}|+ \ldots+|\boldk^{n^l}|\}$, and since $\sigma$ is a shuffle it preserves the ordering $\{\rho_l(|\boldk^{n^{l-1}+1}|),\ldots,\rho_l(|\boldk^{n^l}|)\} \subseteq \{N^{l-1}+1,\ldots,N^l\}$. These two facts imply $\pi \in \sh(n_1,\ldots,n_m)$, and we have
	\begin{align*}
		&\mathrel{\phantom{=}}\{ \sigma \circ (\rho_{1}, \ldots,\rho_{m}) \mid \rho_l \in \osh(|\boldk^{n^{l-1}+1}|,\ldots,|\boldk^{n^l}|), \ l = 1,\ldots,m\} \\
		&= \{ \tau \in \sh(|\boldk^1|,\ldots,|\boldk^n|) \mid \tau(t_{\pi^{-1}(1)}) < \ldots < \tau(t_{\pi^{-1}(n)}) \}\\
		&\eqqcolon \overline{\sh^\pi}(|\boldk^1|,\ldots,|\boldk^n|)
	\end{align*}
	since any $ \tau \in \sh(|\boldk^1|,\ldots,|\boldk^n|)$ with $\tau(t_{\pi^{-1}(1)}) < \ldots < \tau(t_{\pi^{-1}(n)})$ for some $\pi \in \mathfrak S_n$ factors uniquely as $ \sigma \circ (\rho_{1}, \ldots,\rho_{m})$ with $\sigma$ acting on $T$ with $\pi$: this is evident from the fact that each $\rho_l$ acts on the segment $[N^{l-1}+1,N^l]$ and $\sigma$ acts on the whole segment $[1,N]$ but without altering the order in each $[N^{l-1}+1,N^l]$. This implies
	\begin{align*}
		&\{ \sigma \circ (\rho_{1}, \ldots,\rho_{m}) \mid \sigma \in \sh(N_1,\ldots,N_m);\\ &\rho_l \in \osh(|\boldk^{n^{l-1}+1}|,\ldots,|\boldk^{n^l}|), \ l = 1,\ldots,m\} \\
		= &\bigsqcup_{\pi \in \sh(n_1,\ldots,n_m)} \overline{\sh^\pi}(|\boldk^1|,\ldots,|\boldk^n|)
	\end{align*}
	because as $\sigma$ ranges over $\sh(N_1,\ldots,N_m)$ all $\sh(n_1,\ldots,n_m) \ni \pi$'s are obtained, and the $\overline{\sh^\pi}$'s are mutually disjoint since the ordered shuffle relations imposed by different $\pi$'s are mutually exclusive. Since 
	\[
	\osh(\boldk^{\pi^{-1}(1)},\ldots,\boldk^{\pi^{-1}(n)})  = \{\!\{ \tau^{-1}_* (\boldk^1\ldots\boldk^n) \mid \tau \in \overline{\sh^\pi}(|\boldk^1|,\ldots,|\boldk^n|) \}\!\}
	\]
	the proof is concluded. 
\end{proof}
\begin{expl}
	We illustrate the idea behind this lemma with an example. Let 
	\begin{equation}\label{eq:numbers}
		n_1 = 2, \ n_2 = 1; \quad |\boldk^1| = 3, \ |\boldk^2| = 2, \ |\boldk^3| = 3
	\end{equation}
	and with the notations of the proof let
	\begin{align*}
		&\rho_1 = \begin{pmatrix}
			1 & 2 & 3 & 4 & 5 \\ 1 & 3 & 4 & 2 & 5
		\end{pmatrix} \in \osh(3,2), \ \rho_2 = \begin{pmatrix} 1 & 2 & 3 \\ 1 & 2 & 3 \end{pmatrix} \in \osh(3), \\ \Rightarrow \ (\rho_1, \rho_2) &= \begin{pmatrix}
			1 & 2 & 3 & 4 & 5 & 6 & 7 & 8 \\ 1 & 3 & 4 & 2 & 5 & 6 & 7 & 8 
		\end{pmatrix} \\
		\sigma &= \begin{pmatrix}1 & 2 & 3 & 4 & 5 & 6 & 7 & 8 \\ 2 & 3 & 6 & 7 & 8 & 1 & 4 & 5 \end{pmatrix} \in \sh(5,3) \\
		\Rightarrow \ \tau \coloneqq \sigma \circ (\rho_1,\rho_2) &= \begin{pmatrix}1 & 2 & 3 & 4 & 5 & 6 & 7 & 8 \\ 2 & 6 & 7 & 3 & 8 & 1 & 4 & 5 \end{pmatrix} \in \overline{\sh^\pi}(3,2,3) \\
		\text{with } \pi &= \begin{pmatrix} 1 & 2 & 3 \\ 2 & 3 & 1 \end{pmatrix} \in \sh(2,1)
	\end{align*}
	because the restriction of $\sigma \circ (\rho_1, \rho_2)$ to $\{|\boldk^1|, |\boldk^1 + \boldk^2|, |\boldk^1 + \boldk^2 + \boldk^3|\}$ is $\big( \!\begin{smallmatrix}
		3 & 5 & 8 \\ 7 & 8 & 5
	\end{smallmatrix}\!\big)$, which coincides with $\pi$ after renumbering domain and codomain. Also note how, given $\tau$ and the numbers \eqref{eq:numbers} one can recover $\sigma, \rho_1,\rho_2$: first obtain $\rho_1,\rho_2$ considering how $\tau$ orders the segments $[1,5]$, $[6,8]$ (and renumbering) and $\sigma = \tau \circ (\rho_{1}, \ldots,\rho_{m})$. We therefore have, writing terminal elements in red
	\begin{align*}
		(\sigma \circ (\rho_1,\rho_2))^{-1}_* (\boldk_1 \boldk_2 \boldk_3) &= (k^3_1,k^1_1, k^2_1, k^3_2, {\color{red}k^3_3}, k^1_2, {\color{red}k^1_3}, {\color{red}k^2_2}) \\
		&\in \sh(\osh(\boldk^1,\boldk^2),\osh(\boldk^3)) \\
		&\in \osh(\boldk^3,\boldk^1,\boldk^2) = \osh(\boldk^{\pi^{-1}(1)},\boldk^{\pi^{-1}(2)},\boldk^{\pi^{-1}(3)})
	\end{align*}
	which can also be written as
	\begin{align*}
		(k^3_1,k^1_1, k^2_1, k^3_2, {\color{red}k^3_3}, k^1_2, {\color{red}k^1_3}, {\color{red}k^2_2}) &= \eta^{-1}_*(\boldk^3, \boldk^1, \boldk^2) \in \osh(\boldk^3,\boldk^1,\boldk^2) \\ \text{with }
		\eta &= \begin{pmatrix} 1 & 2 & 3 & 4 & 5 & 6 & 7 & 8 \\ 1 & 4 & 5 & 2 & 6 & 7 & 3 & 8  \end{pmatrix} \in \osh(3,3,2).
	\end{align*}
\end{expl}

We primarily use \autoref{lem:shsh} in the two cases $n_1 = \ldots = n_m = 1$ with $m$ arbitrary, and $m = 2$ with $n_1,n_2$ arbitrary; the former admits the following concise reformulation. Given a vector space $W$, let $\odot$ denote symmetric tensor product, and
\begin{equation}
	\odot_m \coloneqq \frac{1}{m!} \sum_{\pi \in \mathfrak S_m} \pi_* \colon W^{\otimes m} \twoheadrightarrow W^{\odot m},\quad w_1 \otimes \cdots \otimes w_m \mapsto w_1 \odot \cdots \odot w_m
\end{equation}
denote the symmetrisation map. When referring to the external tensor product we will replace the symbol $\odot$ with $\boxdot$.
\begin{cor}\label{thm:osym}
	The diagram
	\begin{equation}\label{diag:osym}
		\begin{tikzcd}[column sep = large]
			& T(V)^{\boxtimes m} \arrow[dd,"m!\boxdot_m"] \\
			T(V) \arrow[ru,"\wDOshuffle^m"]\arrow[rd,swap,"\wDshuffle^m"] \\
			& T(V)^{\boxtimes m}
		\end{tikzcd}
	\end{equation}
	commutes.
	\begin{proof}
		The statement in coordinates reads
		\begin{equation*}
			\sum_{\substack{\pi \in \mathfrak S_m \\ \boldh \in \osh(\boldk^{\pi(1)}, \ldots, \boldk^{\pi(m)})}} a^\boldh = \sum_{\boldk \in \sh(\boldk^1,\ldots,\boldk^m)} a^\boldk 
		\end{equation*}
		for non-empty tuples $\boldk^1,\ldots,\boldk^m$. Indeed, we have
		\begin{equation*}
			\begin{split}
				(m! \boxdot \wDOshuffle a)^{\boldk^1,\ldots,\boldk^m} &= \Big( \sum_{\substack{\pi \in \mathfrak S_m \\ (\widetilde a)^m_{\oshuffle}}} \pi_* (a_{(1)} \boxtimes \cdots \boxtimes a_{(m)}) \Big)^{\boldk^1,\ldots,\boldk^m} \\
				&= \sum_{\pi \in \mathfrak S_m} \sum_{(\widetilde a)^m_{\oshuffle}} a_{(\pi(1))}^{\boldk^1} \cdots a_{(\pi(m))}^{\boldk^m} \\
				&= \sum_{\pi \in \mathfrak S_m} \sum_{(\widetilde a)^m_{\oshuffle}} a_{(1)}^{\boldk^{\pi(1)}} \cdots a_{(m)}^{\boldk^{\pi(m)}} \\
				&=  \sum_{\pi \in \mathfrak S_m} (\wDOshuffle^m a)^{\boldk^{\pi(1)}, \ldots, \boldk^{\pi(m)}} \\
				&= \sum_{\substack{\pi \in \mathfrak S_m \\ \boldh \in \overline{\sh}(\boldk^{\pi(1)}, \ldots, \boldk^{\pi(m)})}} a^\boldh.
			\end{split}
		\end{equation*}
		To prove the claim in coordinates we must show the identity of sets
		\begin{equation*}
			\bigsqcup_{\pi \in \mathfrak S_m} \osh(\boldk^{\pi(1)}, \ldots, \boldk^{\pi(m)}) = \sh(\boldk^1,\ldots,\boldk^m)
		\end{equation*}
		for tuples $\boldk^1,\ldots,\boldk^m$ of positive order. This is precisely \autoref{lem:shsh} with $n_1 = \ldots = n_m = 1$, since $\sh(n_1,\ldots,n_m) = \mathfrak S_m$ and we may replace $\pi^{-1}$ with $\pi$.
	\end{proof}
\end{cor}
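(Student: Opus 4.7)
The plan is to translate the diagram \eqref{diag:osym} into a coordinate identity and then recognise it as a special case of \autoref{lem:shsh}. Concretely, evaluating both sides on an arbitrary $a \in T(V)$ against multi-indices $(\boldk^1,\ldots,\boldk^m)$ with each $|\boldk^h| \geq 1$ (the reduced coproducts kill the boundary terms), the commutativity of \eqref{diag:osym} is equivalent to
\begin{equation*}
\sum_{\substack{\pi \in \mathfrak S_m \\ \boldh \in \osh(\boldk^{\pi(1)}, \ldots, \boldk^{\pi(m)})}} a^\boldh \;=\; \sum_{\boldk \in \sh(\boldk^1,\ldots,\boldk^m)} a^\boldk.
\end{equation*}
Since this must hold for all $a$, it in turn reduces to the multiset identity
\begin{equation*}
\bigsqcup_{\pi \in \mathfrak S_m} \osh(\boldk^{\pi(1)}, \ldots, \boldk^{\pi(m)}) \;=\; \sh(\boldk^1,\ldots,\boldk^m).
\end{equation*}

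First I would expand the left-hand side of \eqref{diag:osym}. Using Sweedler-style notation for the iterated reduced ordered-shuffle coproduct $\wDOshuffle^m a = \sum_{(\widetilde a)_{\oshuffle}^m} a_{(1)} \boxtimes \cdots \boxtimes a_{(m)}$ and the definition of $m! \boxdot_m$ as $\sum_{\pi \in \mathfrak S_m} \pi_*$, one computes coordinate-by-coordinate that
\begin{equation*}
(m!\boxdot_m \wDOshuffle^m a)^{\boldk^1,\ldots,\boldk^m} = \sum_{\pi \in \mathfrak S_m} (\wDOshuffle^m a)^{\boldk^{\pi(1)},\ldots,\boldk^{\pi(m)}},
\end{equation*}
by re-indexing the summation variable $a_{(\pi(i))} \mapsto a_{(i)}$ in each term. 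Applying \eqref{eq:wDOshuffleCoords} then yields precisely the left-hand side of the displayed coordinate identity. The right-hand side is immediate from \eqref{eq:DshuffleCoords} extended inductively to $\wDshuffle^m$.

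The combinatorial identity is now exactly the case $n_1 = \cdots = n_m = 1$ of \autoref{lem:shsh}: with all $n_i = 1$ we have $n = m$, $\sh(n_1,\ldots,n_m) = \sh(1,\ldots,1) = \mathfrak S_m$, and the inner ordered shuffles $\osh(\boldk^{n^{l-1}+1},\ldots,\boldk^{n^l}) = \osh(\boldk^l)$ are trivial, so the outer $\sh$ on the left of \autoref{lem:shsh} becomes the ordinary shuffle $\sh(\boldk^1,\ldots,\boldk^m)$, while the right-hand side is exactly $\bigsqcup_{\pi \in \mathfrak S_m} \osh(\boldk^{\pi^{-1}(1)},\ldots,\boldk^{\pi^{-1}(m)})$; replacing $\pi$ by $\pi^{-1}$ (which is a bijection of $\mathfrak S_m$) gives the required equality.

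I do not anticipate a genuine obstacle: the only place where care is needed is the Sweedler-style manipulation that turns the symmetrisation $\sum_\pi \pi_*$ acting after $\wDOshuffle^m$ into a sum of $\wDOshuffle^m$'s evaluated on permuted index tuples, since this is where the combinatorial setup of \autoref{lem:shsh} is matched. Once that bookkeeping is in place, the corollary follows immediately by invoking the lemma.
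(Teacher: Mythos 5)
Your proposal is correct and follows essentially the same route as the paper: translate the diagram into the coordinate identity via the Sweedler re-indexing $a_{(\pi(i))} \mapsto a_{(i)}$, reduce to the multiset identity $\bigsqcup_{\pi \in \mathfrak S_m} \osh(\boldk^{\pi(1)},\ldots,\boldk^{\pi(m)}) = \sh(\boldk^1,\ldots,\boldk^m)$, and invoke \autoref{lem:shsh} with $n_1 = \cdots = n_m = 1$. No gaps.
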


\section{Weakly geometric rough paths}\label{sec:grps}

We denote $T^N(V)$ the vector subspace of $T(V)$ given by all tensors of degree $\leq N$, and super/subscripts of $\leq N$, $\geq M$ denote truncations of the algebra to tensors of the degrees expressed (e.g.\ for $a \in T(V)$ to belong to $T^N(V)$ it means that $a^n = 0$ for $n > N$, or equivalently $a = a^{\leq N}$). We will similarly use $[M,N]$ as a super/subscript to denote tensors of degrees $n$ with $M \leq n \leq N$. A \emph{control} on $[0,T]$ is a continuous function $\omega$ defined on the simplex $\Delta_T \coloneqq \{(s,t) \in [0,T]^2 \mid s \leq t\}$, s.t.\ $\omega(t,t) = 0$ for $0 \leq t \leq T$ and is superadditive, i.e.\ $\omega(s,u) + \omega(u,t) \leq \omega(s,t)$ for $0 \leq s \leq u \leq t \leq T$.
\begin{defn}[Weakly geometric rough path]\label{def:rp}
	Let $T > 0$, $p \geq 1$ and $\omega$ be a control on $[0,T]$. A $p$-\emph{weakly geometric rough path} $\bfX$ controlled by $\omega$, defined on $[0,T]$ and with values in $V$ may be defined as a continuous map 
	\begin{equation}
		\bfX \colon \Delta_T \to T^{\lfloor p \rfloor}(V)
	\end{equation}
	with $\bfX^0 = 1$ and satisfying the following properties, which we first present in coordinate-free form and subsequently in coordinates w.r.t.\ a basis of $V$:
	\begin{description}
		\item[Regularity.] $\displaystyle \sup_{0 \leq s < t \leq T}\frac{\lvert \bfX_{st}^n\rvert}{\omega(s,t)^{n/p}} < \infty$, or $\displaystyle \sup_{0 \leq s < t \leq T}\frac{\lvert \bfX_{st}^{\boldk}\rvert}{\omega(s,t)^{|\boldk|/p}} < \infty$ for $n = |\boldk| = 1,\ldots \p$;
		\item[Multiplicativity.] $(\bfX_{su} \otimes \bfX_{ut})^{\leq \p} = \bfX_{st}$, i.e.\ $ \bfX_{st}^{\boldk} = \displaystyle \sum_{(\boldi, \boldj) = \boldk} \bfX_{su}^{\boldi} \bfX_{ut}^{\boldj}$ for $|\boldk| \leq \p$ and $0 \leq s \leq u \leq t \leq T$;
		\item[Integration by parts.] $\bfX_{st} \boxtimes \bfX_{st} = \Dshuffle \bfX_{st}$, or $\displaystyle \bfX_{st}^\boldi \bfX_{st}^\boldj = \sum_{\boldk \in \sh(\boldi, \boldj)} \bfX_{st}^\boldk$ for all $0 \leq s \leq t \leq T$.
	\end{description}
	Let $\mathscr C^p_\omega([0,T],V)$ denote the set of all such maps.
\end{defn}
In the following we will refer to such objects as \say{rough paths}, dropping the \say{weakly geometric}, since these are the only rough paths that we will be considering in this paper. We will sometimes refer to the third property above as geometricity, since it distinguishes weakly geometric rough paths among the more general branched rough paths, which we do not treat here. We will always denote rough paths in bold. The last condition is usually stated by saying that $\bfX$ takes values in the group $G^{\p}(V)$, defined in \cite[Definition 2.9]{CDLL16}. We will denote $X \coloneqq \bfX^1$ the \emph{trace} of X: when equipped with an initial value $X_0$ (which will often be provided) this is an element of $\mathcal C_\omega^p ([0, T ], V )$ defined as the set of continuous paths $Y \colon [0, T ] \to V$ s.t., denoting $Y_{st} \coloneqq Y_t - Y_s$ (a notation that will be used for paths throughout)
\begin{equation}
	\sup_{0 \leq s < t \leq T} \frac{|Y_{st}|}{\omega(s,t)^{1/p}}< \infty.
\end{equation}
It is sufficient to define $\bfX$ to take values in $T^{\p}(V)$, as \cite[Theorem 2.2.1]{Lyo98} shows that there exists a unique extension of $\widehat{\bfX}$ of $\bfX$ to $T(V)$ which satisfies the above three properties, and for $m >\p$ is given by
\begin{equation}\label{eq:lyonsLim}
	\widehat{\bfX}^m_{st} = \lim_{n \to \infty} \Big(\bigotimes_{[u,v] \in \pi_n} \bfX_{uv}\Big)^m
\end{equation}
where $(\pi_n)_n$ is any sequence of partitions on $[s,t]$ with vanishing step size as $n \to \infty$.

The following proposition states that the symmetric part of a weakly geometric rough path is entirely determined by its trace. Given $\ell \in \mathcal L(T^N(V),U)$ ($\mathcal L$ denotes the space of linear maps) and $a \in T(V)$ we will denote $\langle \ell, a \rangle = \ell(a)$ the evaluation of $\ell$ on $a$. We will always identify $\mathcal L(\mathbb R, U) = U$ by setting $\ell \mapsto \ell(1)$.
\begin{prop}\label{prop:sym}
	For $\bfX \in \mathscr C^p_\omega([0,T],V)$ we have $n!\odot_n \bfX^n_{st} = X^{\otimes n}_{st}$.
	\begin{proof}
		We proceed by induction on $n$. For $n = 0,1$ there is nothing to prove. For the inductive step we will need the following fact: each $\pi \in \mathfrak S_{n+1}$ can be expressed uniquely as $\sigma \circ \rho$ with $\rho$ in the stabiliser of $n+1$ (a subgroup of $\mathfrak S_{n+1}$ isomorphic to $\mathfrak S_n$) and $\sigma \in \sh(n,1)$: indeed, if $m \coloneqq \pi(n+1)$ we may set
		\[
		\sigma(k) \coloneqq \begin{cases} k &1 \leq k \leq m-1 \\ k+1 & m \leq k \leq n \\ m &k = n+1
		\end{cases} \quad
		\rho(k) \coloneqq \begin{cases} \pi(k) &1 \leq \pi(k) \leq m-1 \\ \pi(k)-1 &m \leq \pi(k) \leq n\\ n+1 &k = n+1
		\end{cases}.
		\]
		Uniqueness follows from a counting argument, since there are $n!$ choices for $\rho$ and $n+1$ for $\sigma$. We then compute
		\begin{align*}
			(n+1)! \odot_{n+1} \bfX_{st}^{n+1} &= \sum_{\pi \in \mathfrak S_{n+1}} \pi_* \bfX^{n+1}_{st} \\
			&= \sum_{\rho \in \mathfrak S_n} \rho_* \sum_{\sigma \in \sh(n,1)} \sigma_* \bfX_{st}^{n+1} \\
			&=  \sum_{\rho \in \mathfrak S_n} \rho_*( \Dshuffle\bfX_{st})^{n,1} \\
			&= \sum_{\rho \in \mathfrak S_n} \rho_*\bfX_{st}^n \otimes X_{st} \\
			&= X_{st}^{\otimes (n+1)}
		\end{align*}
		where we have used the geometricity axiom and the inductive hypothesis.
	\end{proof}
\end{prop}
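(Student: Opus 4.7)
The plan is to proceed by induction on $n$. The base cases $n=0$ and $n=1$ are immediate, since $\bfX^0 = 1$ and $\bfX^1 = X$ already satisfy the identity trivially. For the inductive step $n \to n+1$, the strategy is to massage the symmetrisation
\[
(n+1)!\,\odot_{n+1} \bfX^{n+1}_{st} = \sum_{\pi \in \mathfrak{S}_{n+1}} \pi_* \bfX^{n+1}_{st}
\]
into a form where the shuffle (integration by parts) axiom peels off one factor of $X_{st}$, after which the inductive hypothesis can be invoked on $\bfX^n_{st}$.

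The crucial combinatorial input is a unique factorisation $\pi = \sigma \circ \rho$ for each $\pi \in \mathfrak{S}_{n+1}$, with $\rho$ in the stabiliser of $n+1$ (canonically identified with $\mathfrak{S}_n$) and $\sigma \in \mathrm{Sh}(n,1)$. Existence is established by setting $m \coloneqq \pi(n+1)$, taking $\sigma$ to be the shuffle which slides the last entry into slot $m$, and letting $\rho$ be $\pi$ with the $(n+1)$-st position suppressed; uniqueness follows from the count $n! \cdot (n+1) = (n+1)!$. Combined with the composition rule $\pi_* = \rho_* \sigma_*$ from \eqref{eq:compRule}, the symmetrisation becomes
\[
\sum_{\rho \in \mathfrak{S}_n} \rho_* \sum_{\sigma \in \mathrm{Sh}(n,1)} \sigma_* \bfX^{n+1}_{st}.
\]

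The inner sum is precisely the multi-degree $(n,1)$ component of $\Dshuffle \bfX_{st}$, which by the integration by parts axiom equals the same component of $\bfX_{st} \boxtimes \bfX_{st}$, namely $\bfX^n_{st} \otimes X_{st}$ under the identification of $V^{\otimes n} \boxtimes V$ with $V^{\otimes (n+1)}$. Since every $\rho \in \mathfrak{S}_n$ fixes the last tensor slot, $\rho_*(\bfX^n_{st} \otimes X_{st}) = (\rho_* \bfX^n_{st}) \otimes X_{st}$, so summing over $\rho$ yields $(n!\,\odot_n \bfX^n_{st}) \otimes X_{st}$, which by the inductive hypothesis equals $X^{\otimes n}_{st} \otimes X_{st} = X^{\otimes (n+1)}_{st}$.

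I do not anticipate a serious obstacle. The only nontrivial ingredient is the coset decomposition $\mathfrak{S}_{n+1} = \mathrm{Sh}(n,1)\cdot \mathfrak{S}_n$, and this is a short and standard verification. Everything else is a bookkeeping application of the shuffle axiom and the composition rule. As a sanity check, one can verify the identity directly in the conceptual form $\Dshuffle^n \bfX_{st} = \bfX_{st}^{\boxtimes n}$ (obtained by iterating geometricity) and then projecting onto multi-degree $(1,\ldots,1)$; both routes agree.
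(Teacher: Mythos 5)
Your proposal is correct and follows essentially the same route as the paper's proof: the same coset factorisation $\pi = \sigma \circ \rho$ with $\sigma \in \mathrm{Sh}(n,1)$ and $\rho$ in the stabiliser of $n+1$, the same application of the integration-by-parts axiom to identify the inner sum with $\bfX^n_{st} \otimes X_{st}$, and the same use of the inductive hypothesis. No gaps.
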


We proceed to define the objects that can be regarded as dual to rough paths, original to \cite{Gub04}. In what follows we will write $\approx_m$ between two real-valued quantities dependent on $0 \leq s \leq t \leq T$ to mean that their difference lies in $O(\omega(s,t)^{m/p})$ as $t \searrow s$, and simply $\approx$ to mean $\approx_{\p + 1}$. We will frequently use the following properties, which are trivial to check:
\begin{equation}\label{eq:approxProp}
	\begin{split}
		a_{st} \approx_m b_{st} \approx_n c_{st} &\Rightarrow a_{st} \approx_{n\wedge m} c_{st}\\
		a_{st} \approx_m b_{st}, \ c_{st} \approx_n 0 &\Rightarrow a_{st}  c_{st} \approx_{m+n} b_{st}  c_{st}
	\end{split}
\end{equation}
from which we deduce more generally
\begin{equation}\label{eq:generalApprox}
	\begin{split}
		&\mathrel{\phantom{\Rightarrow}}a_{st} \approx_{m_1} b_{st}, \ a_{st},b_{st} \approx_{n_1} 0, \quad c_{st} \approx_{m_2} d_{st},\ c_{st},d_{st} \approx_{n_2} 0 \\
		&\Rightarrow a_{st}  c_{st} \approx_{m_1 + n_2} b_{st}  c_{st} \approx_{m_2+n_1} b_{st}  d_{st} \\
		&\Rightarrow a_{st}  c_{st} \approx_{(m_1+n_2) \wedge (m_2+n_1)} b_{st}  d_{st}.
	\end{split}
\end{equation}
If a continuous map $\widetilde\bfX \colon \Delta_T \to T^{\lfloor p \rfloor}(V)$ satisfies the regularity and integration by parts conditions, and satisfies the multiplicativity condition with a \say{$\approx$} replacing the \say{$=$} (\emph{almost multiplicative}), it defines a rough path by \cite[Theorem 3.3.1]{Lyo98}, by taking the limit \eqref{eq:lyonsLim} (w.r.t.\ $\widetilde \bfX$), and this rough path $\bfX$ is unique with the property that $\widetilde \bfX_{st} \approx \bfX_{st}$. The following lemma tells us that this is also true if the integration by parts condition only holds with an $\approx$ (\emph{almost geometric}). In light of this, we will break with the literature in defining an \emph{almost rough path} as an $\widetilde\bfX$ that satisfies the regularity condition in \autoref{def:rp} and is almost multiplicative and almost geometric.
\begin{prop}[Almost rough paths]\label{prop:geomAlm}
	Let $\widetilde \bfX$ be a $V$-valued almost $p$-rough path. Then there exists a unique $p$-rough path $\bfX$ with the property that $\bfX_{st} \approx \widetilde \bfX_{st}$.
	\begin{proof}
		We use that the shuffle algebra is free abelian over the Lyndon words \cite[Theorem 6.1]{Reu93} to define an intermediate $\overline \bfX$: set $\overline\bfX{}^\boldh \coloneqq \widetilde\bfX{}^\boldh$ if $\boldh$ is a Lyndon word with $|\boldh| \leq \p$, and for a tuple $\boldk$ with $|\boldk| < \p$ expressed (uniquely up to order of factors) as $\sum_\lambda c_\lambda \boldk^1_\lambda \shuffle \cdots \shuffle \boldk^{n_\lambda}_\lambda$ with the $\boldk^j_\lambda$'s (not necessarily distinct) Lyndon words set
		\[
		\overline \bfX{}^{\boldk} \coloneqq \sum_\lambda c_\lambda \overline \bfX{}^{\boldk^1_\lambda} \cdots \overline \bfX{}^{\boldk^{n_{\scaleto{\lambda}{2.7pt}}}_\lambda} =  \sum_\lambda c_\lambda \widetilde \bfX{}^{\boldk^1_\lambda} \cdots \widetilde \bfX{}^{\boldk^{n_{\scaleto{\lambda}{2.7pt}}}_\lambda} \approx \sum_{\lambda} c_\lambda \langle \boldk^1_\lambda\shuffle \cdots \shuffle\boldk^{n_\lambda}_\lambda , \widetilde \bfX \rangle = 
		\widetilde\bfX{}^\boldk
		\]
		since $\widetilde\bfX$ is almost geometric. $\overline \bfX$ is then $\approx \widetilde\bfX$, it satisfies integration by parts (exactly) by construction since if $\boldi = \sum_\mu a_\mu \boldi_\mu^1 \shuffle \cdots \shuffle \boldi^{m_\mu}_\mu$, $\boldj = \sum_\nu b_\nu \boldj_\nu^1 \shuffle \cdots \shuffle \boldj^{n_\nu}_\nu$ with $a_\mu, b_\nu \in \mathbb Q$ and $\boldi^q_\mu, \boldj^r_\nu$ Lyndon
		\[
		\overline \bfX{}^{\boldi} \overline \bfX{}^{\boldj} = \sum_{\mu, \nu} a_\mu b_\nu \overline \bfX{}^{\boldi_\mu^{1}} \cdots \overline \bfX{}^{\boldi^{m_{\scaleto{\mu}{2.7pt}}}_\mu} \overline\bfX{}^{\boldj_\nu^1} \cdots \bfX^{\boldj^{n_{\scaleto{\nu}{1.9pt}}}_\nu} =  \langle \sum_{\mu, \nu} a_\mu b_\nu \boldi_\mu^{1} \shuffle \cdots \shuffle \boldj^{m_\mu}_\mu \shuffle \boldj_\nu^{1} \shuffle \cdots \shuffle \boldj^{n_\nu}_\nu, \overline \bfX\rangle = \langle \boldi \shuffle \boldj , \overline \bfX \rangle
		\]
		and is still almost multiplicative since 
		\[
		\overline \bfX{}^{\boldk} \approx  \bfX^{\boldk}_{st} = \sum_{(\boldi, \boldj) = \boldk} \bfX_{su}^{\boldi} \bfX_{ut}^{\boldj} \approx \sum_{(\boldi, \boldj) = \boldk} \overline\bfX{}_{su}^{\boldi} \overline\bfX{}_{ut}^{\boldj}
		\]
		Existence of $\bfX$ then follows immediately by applying the above-referenced result to $\overline\bfX$ and uniqueness follows from the fact that if $\bfX'$ is a second $p$-rough path satisfying the statement of this proposition, then we have $\bfX' \approx \widetilde \bfX \approx \overline \bfX \ \Rightarrow \ \bfX' = \bfX$ again by the same result.
	\end{proof}
\end{prop}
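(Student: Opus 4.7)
The plan is to reduce the statement to the existing Lyons result cited just before the proposition, which handles the case where the integration by parts axiom holds exactly (rather than up to $\approx$). The only thing we need to supply is a procedure that, starting from $\widetilde{\bfX}$, produces an intermediate object $\overline{\bfX}$ which is still close to $\widetilde{\bfX}$ (in the $\approx$ sense) and still almost multiplicative, but which satisfies the integration by parts relation on the nose. The classical Lyons extension theorem then converts $\overline{\bfX}$ into a genuine $p$-rough path $\bfX$ with $\bfX_{st} \approx \overline{\bfX}_{st} \approx \widetilde{\bfX}_{st}$.

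The algebraic input that enables the construction of $\overline{\bfX}$ is the Radford/Reutenauer theorem: the shuffle algebra is free commutative over the Lyndon words. Concretely, I would define $\overline{\bfX}^{\boldh} := \widetilde{\bfX}^{\boldh}$ whenever $\boldh$ is a Lyndon word of length $\leq \lfloor p \rfloor$, and extend to all tuples $\boldk$ of length $\leq \lfloor p \rfloor$ by first writing
\[
\boldk = \sum_\lambda c_\lambda\, \boldk^1_\lambda \shuffle \cdots \shuffle \boldk^{n_\lambda}_\lambda
\]
in the shuffle polynomial basis of Lyndon words (this expression is unique up to reordering the factors), and then setting
\[
\overline{\bfX}^{\boldk} := \sum_\lambda c_\lambda\, \overline{\bfX}^{\boldk^1_\lambda} \cdots \overline{\bfX}^{\boldk^{n_\lambda}_\lambda}.
\]
By almost geometricity of $\widetilde{\bfX}$, each summand $\overline{\bfX}^{\boldk^1_\lambda}\cdots\overline{\bfX}^{\boldk^{n_\lambda}_\lambda} = \widetilde{\bfX}^{\boldk^1_\lambda}\cdots\widetilde{\bfX}^{\boldk^{n_\lambda}_\lambda} \approx \langle \boldk^1_\lambda \shuffle \cdots \shuffle \boldk^{n_\lambda}_\lambda, \widetilde{\bfX}\rangle$, so summing gives $\overline{\bfX}^{\boldk} \approx \widetilde{\bfX}^{\boldk}$.

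Next I would verify the three required properties of $\overline{\bfX}$. Exact integration by parts is immediate from the construction, because for $\boldi, \boldj$ written as shuffle polynomials of Lyndon words the product $\overline{\bfX}^{\boldi}\overline{\bfX}^{\boldj}$ expands, by definition of $\overline{\bfX}$ on arbitrary tuples, into $\langle \boldi \shuffle \boldj, \overline{\bfX}\rangle$. Regularity and almost multiplicativity are then inherited from $\widetilde{\bfX}$ via the $\approx$ relation and the product rules in \eqref{eq:approxProp}--\eqref{eq:generalApprox}. The existence of $\bfX$ then follows by applying the Lyons extension theorem to $\overline{\bfX}$. For uniqueness, any other rough path $\bfX'$ with $\bfX'_{st}\approx \widetilde{\bfX}_{st}$ also satisfies $\bfX'_{st}\approx \overline{\bfX}_{st}$, and the uniqueness clause of the same theorem forces $\bfX' = \bfX$.

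The main obstacle is the algebraic bookkeeping in steps defining $\overline{\bfX}$ on non-Lyndon tuples and verifying exact integration by parts: one must be careful that the map $\boldk \mapsto \overline{\bfX}^{\boldk}$, initially defined via a choice of shuffle-polynomial expansion, actually satisfies $\overline{\bfX}^{\boldi}\overline{\bfX}^{\boldj} = \langle \boldi \shuffle \boldj, \overline{\bfX}\rangle$ for all $\boldi,\boldj$ with $|\boldi|+|\boldj|\leq \lfloor p \rfloor$. This is where freeness of the shuffle algebra over Lyndon words is essential: it guarantees both well-definedness of $\overline{\bfX}$ and that the shuffle product is computed correctly on the extension. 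Once this is in place, everything else is a routine application of the rules \eqref{eq:approxProp}--\eqref{eq:generalApprox} and the previously cited Lyons theorem.
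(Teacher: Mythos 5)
Your proposal is correct and follows essentially the same route as the paper's proof: define an intermediate $\overline{\bfX}$ by fixing its values on Lyndon words to equal those of $\widetilde{\bfX}$ and extending multiplicatively via the shuffle-polynomial decomposition (using freeness of the shuffle algebra over Lyndon words), verify that $\overline{\bfX}$ is $\approx\widetilde{\bfX}$, exactly geometric, and almost multiplicative, and then invoke the Lyons extension theorem for existence and uniqueness. The paper spells out the exact-integration-by-parts and almost-multiplicativity verifications slightly more explicitly, but the structure and key idea are identical.
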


\begin{defn}
	Let $\bfX$ be as above and $U$ another vector space. An $U$-valued $\bfX$-\emph{controlled path} $\bfH$ is an element of $\mathcal C^p_\omega([0,T],\mathcal L(T^{\p - 1}(V),U))$ (where $\omega$ is the control for $\bfX$) s.t.\ for $n = 0,\ldots, \lfloor p \rfloor -2$ and each $a \in V^{\otimes n}$
	\begin{equation}\label{eq:controlled}
		\langle \bfH_{n;t}, a \rangle \approx_{\p - n} \langle \bfH_{[n,\p-1];s}, \bfX_{st}^{\leq\p-1-n} \otimes a \rangle.
	\end{equation}
	Here $\bfH_n$ denotes the $n$-th level of $\bfH$. Denote $\mathscr D_{\bfX}(U)$ the vector space of all $U$-valued $\bfX$-controlled paths.
\end{defn}
We will always denote controlled paths with an overline. The maps $\bfH_n \coloneqq \bfH|_{V^{\otimes n}}$ are known as the \emph{Gubinelli derivatives} of $H$ and $H \coloneqq \bfH_0 \in U$ is called the \emph{trace} of $\bfH$ (note the discrepancy with rough paths: for these the trace is the order-1 component). Note that the defining condition only involves $\bfX^{\leq \lfloor p \rfloor - 1}$, and that it holds automatically at level $\p - 1$ by regularity of $\bfH$. In coordinates it reads
\begin{equation}\label{eq:contrDefCoords}
	\bfH_{\boldb;t}^{k} \approx_{\p - |\boldb|} \sum_{|\bolda| = 0}^{\p - 1 - |\boldb|} \bfH^k_{(\bolda,\boldb);s} \bfX^{\bolda}_{st}, \quad 0 \leq |\boldb| \leq \p - 2.
\end{equation}
Here the superscript $k$ refers to the value of $\bfH$ in $U$ (and will often be omitted when unnecessary), and the sum is not only on the length $|\bolda|$ of the tuple $\bolda$, but on the tuple itself. For the branched version of this definition, see \cite{HK15}.

An important case is when $U = \mathcal L(V,W)$ for another vector space $W$: by the tensor-hom adjunction we then have
\begin{equation}\label{eq:adj}
	\mathcal L(T^{\lfloor p \rfloor - 1}(V),\mathcal L(V,W)) = \mathcal L(T^{\lfloor p \rfloor - 1}(V) \otimes V,W) = \mathcal L \Big(\bigoplus_{n = 1}^\p V^{\otimes n},W \Big).
\end{equation}
We will use angle brackets and coordinate notation for linear maps accordingly, i.e.\ the last slot in a bracket or in a tuple will refer to the copy of $V$ in the target space of the original linear map. We will call controlled paths valued in $\mathcal L(V,W)$ \emph{$W$-valued controlled integrands}, and we may rewrite \eqref{eq:controlled} as
\begin{equation}\label{eq:contrInt}
	\langle \bfH_t, b \rangle \approx_{\p - n +1} \langle \bfH_{[n,\p];s}, \bfX_{st}^{[0,\p - n]} \otimes b \rangle \in W, \quad b \in V^{\otimes n}, \quad n = 1,\ldots,\p-1
\end{equation}
or in coordinates
\begin{equation}\label{eq:contrIntCoords}
	\bfH^k_{\boldb;t} \approx_{\p - |\boldb|+1} \sum_{|\bolda| = 0}^{\p - |\boldb|} \bfH^k_{(\bolda,\boldb);s} \bfX^{\bolda}_{st}, \quad 1 \leq |\boldb| \leq \p - 1.
\end{equation}
The next example contains a very important example of controlled path.
\begin{expl}[Smooth functions of $X$]\label{ex:FX}
	Let $F \in C^\infty(V,U)$, then
	\begin{equation}
		t \mapsto (F(X_t), DF(X_t), \ldots, D^{\p-1} F(X_t)) \in \mathcal L(T^{\p -1}(V), U)
	\end{equation}
	is an $\bfX$-controlled path, which we denote $\overline{F}(X)$. Indeed, denoting by $\partial_{\boldc}F$ the order-$|\boldc|$ partial derivative of $F$ in the directions of the chosen basis determined by the tuple $\boldc$, we have, for $0 \leq |\boldb| \leq \p - 2$
	\begin{align*}
		&\mathrel{\phantom{=}}\overline{F}(X_t)_{\boldb} - \sum_{|\bolda| = 0}^{\p - 1 - |\boldb|} \overline{F}(X_s)_{(\bolda,\boldb)} \bfX^{\bolda}_{st}\\
		&= \partial_\boldb F(X_t) - \sum_{|\bolda| = 0}^{\p - 1 - |\boldb|} \partial_{\bolda,\boldb} F(X_s) \bfX^{\bolda}_{st} \\
		&= \partial_\boldb F(X_t) - \sum_{n = 0}^{\p - 1 - |\boldb|} \frac{1}{n!} \partial_{\bolda,\boldb} F(X_s) X_{st}^{\alpha_1} \cdots X_{st}^{\alpha_n} \\
		&\approx_{\p - |\boldb|} 0
	\end{align*}
	where we have used \autoref{prop:sym} together with the symmetry of higher differentials and Taylor's approximation. Note that the symmetry of Gubinelli derivatives is a special feature of this kind of controlled path, and cannot be expected to hold in general. When $U = \mathcal L(V,W)$ we shall call $F$ an \emph{$W$-valued 1-form}, and we adopt the convention
	\[
	\langle \overline F(X), v_1 \otimes \cdots \otimes v_{n+1} \rangle = D^nF(X)(v_1, \ldots, v_n)(v_{n+1}) \in W
	\]
	or in coordinates $\overline F(X)_{\bolda,\beta} = \partial_{\bolda} F_\beta(X)$.
\end{expl}
The next lemma is necessary for the definition of rough integral.
\begin{lem}
	Let $\bfX \in \mathscr C^p_\omega([0,T],V)$ and $\bfH \in \mathscr D_{\bfX}(\mathcal L(V,W))$. Define, for $0 \leq s \leq t \leq T$
	\begin{equation}
		\Xi_{st} \coloneqq \langle \bfH_s, \bfX_{st}^{\geq 1} \rangle \in W
	\end{equation}
	where the evaluation is taken under the identification \eqref{eq:adj}. Then $\Xi$ is almost additive: for all $0\leq s \leq u \leq t \leq T$
	\begin{equation}
		\Xi_{st} -  \Xi_{su} -  \Xi_{ut} \approx 0
	\end{equation}
	\begin{proof}
		Using the multiplicativity axiom, the regularity of $\bfH$ and \eqref{eq:contrIntCoords} (together with \eqref{eq:approxProp}) we may write
		\begin{align*}
			&\mathrel{\phantom{=}} \Xi_{st} -  \Xi_{su} -  \Xi_{ut} \\
			&= \sum_{|\boldc| = 1}^\p (\bfH_{\boldc;s} \bfX_{st}^\boldc  - \bfH_{\boldc;s} \bfX_{su}^\boldc  - \bfH_{\boldc;u} \bfX_{ut}^\boldc ) \\
			&= \sum_{|\boldc| = 1}^\p (\bfH_{\boldc;s} (\bfX_{st}^\boldc  - \bfX_{su}^\boldc - \bfX_{ut}^\boldc ) - \bfH_{\boldc;su} \bfX_{ut}^\boldc ) \\
			&= \sum_{|\boldc| = 1}^\p \Big(\bfH_{\boldc;s} \sum_{\substack{(\bolda,\boldb) = \boldc \\ |\bolda|, |\boldb| \geq 1}} \bfX_{su}^\bolda \bfX_{ut}^\boldb - \bfH_{\boldc;su} \bfX_{ut}^\boldc \Big) \\
			&\approx \sum_{\substack{|\bolda|, |\boldb| \geq 1 \\ |\bolda| + |\boldb| \leq \p}} \bfH_{(\bolda, \boldb);s} \bfX_{su}^\bolda \bfX_{ut}^\boldb - \sum_{|\boldsymbol \varepsilon| = 1}^{\p - 1} \bfH_{\boldsymbol{\varepsilon};su} \bfX_{ut}^{\boldsymbol{\varepsilon}} \\
			&\approx \sum_{\substack{|\bolda|, |\boldb| \geq 1 \\ |\bolda| + |\boldb| \leq \p}} \bfH_{(\bolda, \boldb);s} \bfX_{su}^\bolda \bfX_{ut}^\boldb - \sum_{|\boldsymbol \varepsilon| = 1}^{\p - 1} \sum_{ |\boldsymbol{\delta}| = 1}^{\p - |\boldsymbol \varepsilon|}\bfH_{(\boldsymbol{\delta},\boldsymbol{\varepsilon});s} \bfX_{su}^{\boldsymbol{\delta}} \bfX_{ut}^{\boldsymbol{\varepsilon}} \\
			&= 0
		\end{align*}
	\end{proof}
\end{lem}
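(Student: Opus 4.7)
The plan is to expand the difference $\Xi_{st} - \Xi_{su} - \Xi_{ut}$ as a sum over tuples $\boldc$ with $1 \leq |\boldc| \leq \p$, and then use the three key ingredients: (i) the multiplicativity of $\bfX$, (ii) the controlled integrand condition \eqref{eq:contrIntCoords} applied to $\bfH$, and (iii) the regularity estimates on $\bfX$ and $\bfH$ to control the error order.

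First I would write $\Xi_{st} - \Xi_{su} - \Xi_{ut} = \sum_{|\boldc|=1}^{\p}(\bfH_{\boldc;s}\bfX_{st}^\boldc - \bfH_{\boldc;s}\bfX_{su}^\boldc - \bfH_{\boldc;u}\bfX_{ut}^\boldc)$ and split the last term using $\bfH_{\boldc;u} = \bfH_{\boldc;s} + \bfH_{\boldc;su}$. The $\bfH_{\boldc;s}$ terms combine into $\bfH_{\boldc;s}(\bfX^\boldc_{st} - \bfX^\boldc_{su} - \bfX^\boldc_{ut})$, and by the multiplicativity axiom this equals $\bfH_{\boldc;s} \sum_{(\bolda,\boldb)=\boldc,\,|\bolda|,|\boldb|\geq 1}\bfX_{su}^\bolda \bfX_{ut}^\boldb$, since the boundary terms $(\bolda,\boldb) = (\boldc,())$ and $(\bolda,\boldb)=((),\boldc)$ cancel with $\bfX_{su}^\boldc$ and $\bfX_{ut}^\boldc$.

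At this point the remaining expression is
\[
\sum_{\substack{|\bolda|,|\boldb|\geq 1 \\ |\bolda|+|\boldb|\leq \p}} \bfH_{(\bolda,\boldb);s}\bfX_{su}^\bolda \bfX_{ut}^\boldb \; - \; \sum_{|\boldsymbol\varepsilon|=1}^{\p} \bfH_{\boldsymbol\varepsilon;su}\bfX_{ut}^{\boldsymbol\varepsilon}.
\]
For the second sum, the $|\boldsymbol\varepsilon| = \p$ contribution is absorbed into the error term because $\bfH_{\boldsymbol\varepsilon;su} \approx_{1/p} 0$ by regularity of $\bfH$ while $\bfX_{ut}^{\boldsymbol\varepsilon}\approx_{\p/p} 0$, giving a combined order of $(\p+1)/p$. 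For the remaining terms with $1 \leq |\boldsymbol\varepsilon| \leq \p-1$, I apply the controlled integrand condition \eqref{eq:contrIntCoords} to replace $\bfH_{\boldsymbol\varepsilon;su} = \bfH_{\boldsymbol\varepsilon;u} - \bfH_{\boldsymbol\varepsilon;s}$ by $\sum_{|\boldsymbol\delta|=1}^{\p-|\boldsymbol\varepsilon|}\bfH_{(\boldsymbol\delta,\boldsymbol\varepsilon);s}\bfX_{su}^{\boldsymbol\delta}$ with error of order $\p - |\boldsymbol\varepsilon|+1$; multiplying by $\bfX_{ut}^{\boldsymbol\varepsilon}$ of order $|\boldsymbol\varepsilon|$ gives a total error of order $\p+1$, as desired.

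The two sums now match exactly after the relabelling $(\bolda,\boldb) \leftrightarrow (\boldsymbol\delta,\boldsymbol\varepsilon)$, so the difference is $\approx 0$. The main obstacle is purely bookkeeping: one must ensure that each approximation $\approx_m$ used has $m \geq \p+1$ after accounting for the prefactor multiplying it, which follows from the elementary rule \eqref{eq:generalApprox} once the orders of $\bfH_{\boldsymbol\varepsilon;su}$, $\bfX_{ut}^{\boldsymbol\varepsilon}$, and the controlled-path remainder are combined.
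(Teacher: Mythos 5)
Your proposal is correct and reproduces the paper's own proof step for step: split $\bfH_{\boldc;u}=\bfH_{\boldc;s}+\bfH_{\boldc;su}$, apply multiplicativity to the $\bfH_{\boldc;s}$ block, discard the $|\boldsymbol\varepsilon|=\p$ remainder by regularity, expand $\bfH_{\boldsymbol\varepsilon;su}$ via \eqref{eq:contrIntCoords}, and observe the two resulting sums cancel after relabelling. The only blemish is notational: in the paper's convention $\approx_m$ already means $O(\omega(s,t)^{m/p})$, so you should write $\bfH_{\boldsymbol\varepsilon;su}\approx_1 0$ and $\bfX^{\boldsymbol\varepsilon}_{ut}\approx_{\p}0$ rather than $\approx_{1/p}$ and $\approx_{\p/p}$; the magnitudes you invoke are nonetheless the right ones.
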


\begin{defn}[Rough integral]\label{def:rint}
	Let $\bfX \in \mathscr C^p_\omega([0,T],V)$, $\bfH \in \mathscr D_{\bfX}(\mathcal L(V,W))$ be as above. We define
	\begin{equation}
		\int_0^\cdot \bfH \text{d} \bfX \colon [0,T] \to W
	\end{equation}
	to be the unique path $I \in \mathcal C_\omega([0,T],W)$ with the property that $I_{st} \approx \Xi_{st}$, which exists by \cite[Theorem 3.3.1]{Lyo98}, and is given by
	\[
	I_{st} = \lim_{n \to \infty}\sum_{[u,v] \in \pi_n} \Xi_{uv}	
	\]
	for a sequence of partitions $(\pi_n)_n$ with vanishing step size. We can make $\int \bfH \text{d} \bfX$ into an $\bfX$-controlled path by defining, for $1 \leq n \leq \p -1$
	\[
	\bigg(\overline \int \bfH \text{d} \bfX \bigg)_n \coloneqq \bfH_{n-1} \in \mathcal L(V^{\otimes n - 1},\mathcal L(V,W)) = \mathcal L(V^{\otimes n},W).
	\]
	Note the presence of the bar above the integral sign, which emphasises membership to $\mathscr D_{\bfX}(W)$.
\end{defn}

An $\bfX$-controlled path can be made into a rough path in its own right. We use \eqref{eq:liftMotivation} as a blueprint for the following definition, where we truncate at the correct order to avoid infinite sums.
\begin{defn}[Lift of a controlled path]\label{def:lift}
	Let $\bfX \in \mathscr C^p_\omega([0,T],V)$, $\bfH \in \mathscr D_{\bfX}(U)$. Define $\upharpoonleft_{\bfX} \!\! \bfH \colon \Delta_T \to T^{\p}(U)$ (notice the partial arrow notation to indicate almost multiplicativity \& geometricity) by
	\begin{equation}
		(\upharpoonleft_{\bfX} \!\! \bfH)^0_{st} \coloneqq 1,\quad (\upharpoonleft_{\bfX} \!\! \bfH)^1_{st} \coloneqq H_{st}
	\end{equation}
	and for $2\leq m \leq \p$
	\begin{equation}\label{eq:partialArrow}
		\begin{split}
			(\upharpoonleft_{\bfX} \!\! \bfH)^m_{st} &\coloneqq \langle \bfH^{\boxtimes m}_s, \wDOshuffle^m \bfX_{st} \rangle \\
			&=  \sum_{\substack{n_1, \ldots, n_m \geq 1 \\ n \coloneqq n_1 + \ldots + n_m \leq \p}} \langle \bfH_{n_1;s} \boxtimes \cdots \boxtimes \bfH_{n_m;s}, (\wDOshuffle \bfX)^{n_1,\ldots,n_m}_{st} \rangle.
		\end{split}
	\end{equation}
	As it is shown in \autoref{thm:almost} below, \cite[Theorem 3.3.1]{Lyo98} applies to this functional, and given any sequence of partitions $(\pi_n)_n$ with vanishing step size
	\begin{equation}\label{eq:truelift}
		(\xhtrue)_{st} \coloneqq \lim_{n \to \infty} \Big( \bigotimes_{[u,v] \in \pi_n} (\xhapprox)_{uv} \Big)
	\end{equation}
	defines an element of $\mathscr C^p_\omega([0,T],U)$, which we call the \emph{lift} of $\bfH$ to rough path w.r.t.\ $\bfX$.
\end{defn}

Note how it was necessary to distinguish the case $m = 1$ above: this is due to the fact that we do not have the $\p^\text{th}$ Gubinelli derivative, and therefore \eqref{eq:partialArrow} would only be accurate at order $\p$ (though in all explicit cases presented here these are known, and \eqref{eq:partialArrow} is applicable for $m = 1$ too; an example where the case distinction is essential would be \autoref{ex:FX} with $F$ only $(\p -1)$-times differentiable). \eqref{eq:partialArrow} can be written dually as
\begin{equation}
	(\xhapprox)^m_{st} = \langle \bfH_{\geq 1; s}^{\oshuffle m}, \bfX_{st} \rangle = \sum_{\substack{n_1, \ldots, n_m \geq 1 \\ n \coloneqq n_1 + \ldots + n_m \leq \p}} \langle \bfH_s^{n_1} \oshuffle \cdots \oshuffle \bfH_s^{n_m}, \bfX^n_{st} \rangle
\end{equation}
and in coordinates as
\begin{equation}\label{eq:lastSumIn}
	\begin{split}
		(\upharpoonleft_{\bfX} \!\! \bfH)^{()} &= 1, \quad (\upharpoonleft_{\bfX} \!\! \bfH)^k = H^k \\
		(\upharpoonleft_{\bfX} \!\! \bfH)^{(k_1,\ldots,k_m)}_{st} &= \sum_{\substack{|\boldc^1|, \ldots , |\boldc^m| \geq 1 \\ |\boldc^1|+ \ldots + |\boldc^m| \leq \p \\ \boldc \in \osh(\boldc^1,\ldots,\boldc^m)}} \bfH^{k_1}_{\boldc^1;s} \cdots \bfH^{k_m}_{\boldc^m;s} \bfX^\boldc_{st}
	\end{split}
\end{equation}
In explicit calculations we will use $\xhapprox$, for which we have a combinatorial expression, as a proxy for the true lift $\xhtrue$.

The next result is one of the main theorems in this article. It can be compared with \cite[Theorem 4.6]{LCL07}, which applies to the special case of integrals of $\text{Lip}(\gamma)$ forms, covered in \autoref{expl:roughIntLift} below. Their proof makes use of the symmetry of $\text{Lip}(\gamma)$ forms, while the lemma below does not require it. Since some of the indexing is quite complex, it will be helpful to denote
\begin{equation}
	\begin{split}
		A_1 &\coloneqq \bigcup_{m=1}^\infty (I^\bullet_*)^m  \\
		A^p_1 &\coloneqq \bigcup_{m=1}^\infty \big\{ (\boldc^1,\ldots,\boldc^m) \in (I^\bullet_*)^m \ \big\mid \ |\boldc^1| + \ldots + |\boldc^m| \leq \p \big\}
	\end{split}
\end{equation}
so for example the last sum in \eqref{eq:lastSumIn} can be written
\[
\sum_{\substack{(\boldc^1,\ldots,\boldc^m) \in A^p_1 \\ \boldc \in \osh(\boldc^1,\ldots,\boldc^m)}} \bfH^{k_1}_{\boldc^1;s} \cdots \bfH^{k_m}_{\boldc^m;s} \bfX^\boldc_{st}
\]
where $m$ is intended as fixed.
\begin{thm}\label{thm:almost}
	$\xhapprox$ is an almost rough path. Therefore the limit taken in \eqref{eq:truelift} exists and defines a $U$-valued $p$-weakly geometric rough path, controlled by $\omega$ on $[0,T]$, with trace $H$.
	\begin{proof}
		We begin by showing almost multiplicativity, i.e.\ that for $|\boldk| = 0,\ldots,\p$ and $0 \leq s \leq u \leq t \leq T$
		\[
		\sum_{(\boldi, \boldj) = \boldk} (\xhapprox)^\boldi_{su} (\xhapprox)^\boldj_{ut} \approx (\upharpoonleft_{\bfX} \!\! \bfH)^\boldk_{st}.
		\]
		For $|\boldk| = 0$ this is trivial and for $|\boldk| = 1$ it coincides with the statement that $H$ is a path. For $|\boldk| = 2$ (which presupposes $\p \geq 2$) we have
		\begin{equation}\label{eq:k2}
			\begin{split}
				&\mathrel{\phantom{=}} \sum_{(\boldi, \boldj) = (k_1,k_2)} (\xhapprox)^\boldi_{su} (\xhapprox)^\boldj_{ut} \\
				&= (\xhapprox)^{(k_1,k_2)}_{su} + (\xhapprox)^{k_1}_{su} (\xhapprox)^{k_2}_{ut} + (\xhapprox)^{(k_1,k_2)}_{ut} \\
				&= \!\!\!\! \sum_{\substack{(\bolda^1,\bolda^2) \in A^p_1 \\ \bolda \in \osh(\bolda^1,\bolda^2)}} \!\!\!\!(\bfH^{k_1}_{\bolda^1;s} \bfH^{k_2}_{\bolda^2;s} \bfX^\bolda_{su}) + H^{k_1}_{su} H^{k_2}_{ut} + \!\!\!\! \sum_{\substack{(\boldb^1,\boldb^2) \in A^p_1 \\ \boldb \in \osh (\boldb^1,\boldb^2)}} \!\!\!\!(\bfH^{k_1}_{\boldb^1;u} \bfH^{k_2}_{\boldb^2;u} \bfX^\boldb_{ut}).
			\end{split}
		\end{equation}
		We continue the calculation by re-expanding all the $\bfH$ terms at $s$ and using \eqref{eq:approxProp}:
		\[
		H^{k_1}_{su} \approx_\p \sum_{|\bolda^1| = 1}^{\p - 1} \bfH_{\bolda^1;s}^{k_1} \bfX^{\bolda^1}_{su}
		\]
		\[
		H^{k_2}_{ut} \approx_\p \sum_{|\boldb| = 1}^{\p - 1} \bfH_{\boldb;u} \bfX^{\boldb}_{ut} \approx_\p \sum_{\substack{|\boldb| = 1,\ldots, \p -1 \\ |\bolda^2| = 0, \ldots,\p - 1-|\boldb|}}\bfH^{k_2}_{(\bolda^2,\boldb);s} \bfX_{su}^{\bolda^2} \bfX^{\boldb}_{ut}.
		\]
		These two identities, the fact that $H_{su},H_{ut} \approx_1 0$ and \eqref{eq:generalApprox} imply
		\[
		H^{k_1}_{su}H^{k_2}_{ut} \approx \sum_{\substack{|\boldb| = 1,\ldots, \p -1 \\|\bolda^1| = 1,\ldots, \p -1 \\ |\bolda^2| = 0, \ldots,\p - 1-|\boldb| \\ \bolda \in \sh(\bolda^1,\bolda^2)}} \bfH^{k_1}_{\bolda^1;s} \bfH^{k_2}_{(\bolda^2,\boldb);s} \bfX^\bolda_{su}\bfX^{\boldb}_{ut}.
		\]
		Similarly
		\[
		\bfH^{k_1}_{\boldb^1;u} \bfH^{k_2}_{\boldb^2;u} \bfX^\boldb_{ut} \approx \sum_{\substack{|\bolda^1| = 0,\ldots,\p - 1 - |\boldb^1| \\ |\bolda^2| = 0,\ldots,\p - 1 - |\boldb^2| \\ \bolda \in \sh(\bolda^1,\bolda^2)}} \bfH^{k_1}_{(\bolda^1,\boldb^1);s} \bfH^{k_2}_{(\bolda^2,\boldb^2);s} \bfX^\bolda_{su} \bfX^\boldb_{ut}.
		\]
		Incorporating these computations in \eqref{eq:k2} we obtain
		\begin{align*}
			&\mathrel{\phantom{=}} \!\!\!\! \sum_{(\boldi, \boldj) = (k_1,k_2)} \!\!\!\! (\xhapprox)^\boldi_{su} (\xhapprox)^\boldj_{ut} \\
			&\approx \!\!\!\! \sum_{\substack{(\bolda^1,\bolda^2) \in A^p_1 \\ \bolda \in \osh(\bolda^1,\bolda^2)}} \!\!\!\! (\bfH^{k_1}_{\bolda^1;s} \bfH^{k_2}_{\bolda^2;s} \bfX^\bolda_{su}) + \!\!\!\! \sum_{\substack{|\boldb| = 1,\ldots, \p -1 \\|\bolda^1| = 1,\ldots, \p -1 \\ |\bolda^2| = 1, \ldots,\p - 1-|\boldb| \\ \bolda \in \sh(\bolda^1,\bolda^2)}}\!\!\!\! (\bfH^{k_1}_{\bolda^1;s} \bfH^{k_2}_{(\bolda^2,\boldb);s} \bfX^\bolda_{su}\bfX^\boldb_{ut}) \\
			&\mathrel{\phantom{=}} + \!\!\!\! \sum_{\substack{(\boldb^1,\boldb^2) \in A^p_1 \\ |\bolda^1| = 0,\ldots,\p - 1 - |\boldb^1| \\ |\bolda^2| = 0,\ldots,\p - 1 - |\boldb^2| \\ \bolda \in \sh(\bolda^1,\bolda^2) \\\boldb \in \osh (\boldb^1,\boldb^2) }} \!\!\!\! (\bfH^{k_1}_{(\bolda^1,\boldb^1);s} \bfH^{k_2}_{(\bolda^2,\boldb^2);s} \bfX^\bolda_{su} \bfX^\boldb_{ut}) \\
			&= \!\!\!\! \sum_{(\boldc^1,\boldc^2) \in A^p_1 } \!\!\!\! \bfH^{k_1}_{\boldc^1;s} \bfH^{k_2}_{\boldc^2;s} \!\!\!\! \sum_{\substack{l = 0,1,2 \\ \bolda^h = \boldc^h, \ h \leq l \\ (\bolda^h, \boldb^h) = \boldc^h, \ |\boldb^h| \geq 1, \ h \geq l+1 \\ \bolda \in \sh(\osh(\bolda^1,\ldots,\bolda^l),\sh(\bolda^{l+1},\ldots,\bolda^2))\\ \boldb \in \osh(\boldb^{l+1},\ldots,\boldb^2)}}\!\!\!\!\bfX^\bolda_{su}\bfX^{\boldb}_{ut} \\
			&= \mathrel{\phantom{=}} \!\!\!\! \sum_{(\boldc^1,\boldc^2) \in A^p_1} \!\!\!\!  \bfH^{k_1}_{\boldc^1;s} \bfH^{k_2}_{\boldc^2;s} \sum_{\substack{\boldc \in \osh(\boldc^1,\boldc^2) \\ (\bolda, \boldb) = \boldc}}\bfX^\bolda_{su}\bfX^{\boldb}_{ut} \\
			&= (\xhapprox)_{st}^{(k_1,k_2)}.
		\end{align*}
		where we have used \autoref{lem:orderedCompat} in the second-last identity and multiplicativity of $\bfX$ in the last. The case of $m \coloneqq |\boldk| \geq 3$ (which presupposes $\p \geq 3$) is handled similarly, but has to be distinguished from the previous case since the middle terms are not the same.
		\begin{align*}
			&\mathrel{\phantom{=}} \sum_{(\boldi, \boldj) = \boldk} (\xhapprox)^\boldi_{su} (\xhapprox)^\boldj_{ut} \\
			&= \!\!\!\!\sum_{\substack{(\bolda^1,\ldots,\bolda^m) \in A^p_1 \\ \bolda \in \osh(\bolda^1,\ldots,\bolda^m)}}\!\!\!\! \bfH^{k_1}_{\bolda^1;s} \cdots \bfH^{k_m}_{\bolda^m;s} \bfX^\bolda_{su} + \Big(\!\!\!\!\sum_{\substack{(\bolda^1,\ldots,\bolda^{m-1}) \in A^p_1 \\ \bolda \in \osh(\bolda^1,\ldots,\bolda^{m-1})}}\!\!\!\!\bfH^{k_1}_{\bolda^1;s} \cdots \bfH^{k_{m-1}}_{\bolda^{m-1};s} \bfX^\bolda_{su} \Big) H^{k_m}_{ut}  \\
			&\mathrel{\phantom{=}} + \sum_{l = 2}^{m-2} \Big(\!\!\!\!\sum_{\substack{(\bolda^1,\ldots,\bolda^l) \in A^p_1 \\ \bolda \in \osh(\bolda^1,\ldots,\bolda^l)}}\!\!\!\! \bfH^{k_1}_{\bolda^1;s} \cdots \bfH^{k_l}_{\bolda^l;s} \bfX^\bolda_{su} \Big) \Big(\!\!\!\!\sum_{\substack{(\boldb^{l+1},\ldots,\boldb^m) \in A^p_1 \\ \boldb \in \osh(\boldb^{l+1},\ldots,\boldb^m)}}\!\!\!\! \bfH^{k_{l+1}}_{\boldb^{l+1};u} \cdots \bfH^{k_m}_{\boldb^m;u} \bfX^\boldb_{ut} \Big) \\
			&\mathrel{\phantom{=}}+  H^{k_1}_{su} \Big(\!\!\!\!\sum_{\substack{(\boldb^2,\ldots,\boldb^m) \in A^p_1 \\ \boldb \in \osh(\boldb^2,\ldots,\boldb^m)}}\!\!\!\!\bfH^{k_2}_{\boldb^2;u} \cdots \bfH^{k_m}_{\boldb^m;u} \bfX^\boldb_{ut} \Big) + \!\!\!\!\sum_{\substack{(\boldb^1,\ldots,\boldb^m) \in A^p_1 \\ \boldb \in \osh(\boldb^1,\ldots,\boldb^m)}}\!\!\!\! \bfH^{k_1}_{\boldb^1;u} \cdots \bfH^{k_m}_{\boldb^m;u} \bfX^\boldb_{ut} \\
			&\approx \!\!\!\!\sum_{\substack{(\bolda^1,\ldots,\bolda^m) \in A^p_1 \\ \bolda \in \osh(\bolda^1,\ldots,\bolda^m)}}\!\!\!\! \bfH^{k_1}_{\bolda^1;s} \cdots \bfH^{k_m}_{\bolda^m;s} \bfX^\bolda_{su} \\
			&\mathrel{\phantom{=}}+ \Big(\!\!\!\!\sum_{\substack{(\bolda^1,\ldots,\bolda^{m-1}) \in A^p_1 \\ \bolda \in \osh(\bolda^1,\ldots,\bolda^{m-1})}}\!\!\!\!\bfH^{k_1}_{\bolda^1;s} \cdots \bfH^{k_{m-1}}_{\bolda^{m-1};s} \bfX^\bolda_{su} \Big) \Big(\!\!\!\!\sum_{\substack{|\boldb^m| = 1,\ldots, \p -1 \\ |\bolda^m| = 0, \ldots,\p - 1-|\boldb|}}\!\!\!\!\bfH^{k_m}_{(\bolda^m,\boldb^m);s} \bfX_{su}^{\bolda^m} \bfX^{\boldb^m}_{ut} \Big)  \\
			&\mathrel{\phantom{=}} + \sum_{l = 2}^{m-2} \Big[ \Big(\!\!\!\!\sum_{\substack{(\bolda^1,\ldots,\bolda^l) \in A^p_1 \\ \bolda \in \osh(\bolda^1,\ldots,\bolda^l)}}\!\!\!\! \bfH^{k_1}_{\bolda^1;s} \cdots \bfH^{k_l}_{\bolda^l;s} \bfX^\bolda_{su} \Big) \\
			&\mathrel{\phantom{=}} \cdot \Big(\!\!\!\!\sum_{\substack{(\boldb^{l+1},\ldots,\boldb^m) \in A^p_1 \\ |\bolda^h| = 0,\ldots,\p - 1 - |\boldb^h|, \ h \geq l+1  \\ \bolda \in \sh(\bolda^{l+1},\ldots,\bolda^m) \\ \boldb \in \osh(\boldb^{l+1},\ldots,\boldb^m)}}\!\!\!\! \bfH^{k_{l+1}}_{(\bolda^{l+1},\boldb^{l+1});s} \cdots \bfH^{k_m}_{(\bolda^m,\boldb^m);s} \bfX^\bolda_{su} \bfX^\boldb_{ut} \Big)\Big] \\
			&\mathrel{\phantom{=}}+  \Big( \sum_{|\bolda^1| = 1}^{\p - 1} \bfH_{\bolda^1;s}^{k_1} \bfX^{\bolda^1}_{su} \Big) \Big(\!\!\!\!\sum_{\substack{(\boldb^2,\ldots,\boldb^m) \in A^p_1 \\ |\bolda^h| = 0,\ldots,\p - 1 - |\boldb^h|, \ h \geq 2  \\ \bolda \in \sh(\bolda^{2},\ldots,\bolda^m) \\ \boldb \in \osh(\boldb^{2},\ldots,\boldb^m)}}\!\!\!\! \bfH^{k_{2}}_{(\bolda^{2},\boldb^{2});s} \cdots \bfH^{k_m}_{(\bolda^m,\boldb^m);s} \bfX^\bolda_{su} \bfX^\boldb_{ut} \Big) + \\
			&+\sum_{\substack{(\boldb^1,\ldots,\boldb^m) \in A^p_1 \\ |\bolda^h| = 0,\ldots,\p - 1 - |\boldb^h|  \\ \bolda \in \sh(\bolda^1,\ldots,\bolda^m) \\ \boldb \in \osh(\boldb^1,\ldots,\boldb^m)}}\!\!\!\! \bfH^{k_1}_{(\bolda^1,\boldb^1);s} \cdots \bfH^{k_m}_{(\bolda^m,\boldb^m);s} \bfX^\bolda_{su} \bfX^\boldb_{ut} \\
			&= \!\!\!\! \sum_{(\boldc^1,\ldots,\boldc^m) \in A^p_1} \!\!\!\! \bfH^{k_1}_{\boldc^1;s} \cdots \bfH^{k_m}_{\boldc^m;s} \!\!\!\! \sum_{\substack{l = 0,\ldots,m \\ \bolda^h = \boldc^h, \ h \leq l \\ (\bolda^h, \boldb^h) = \boldc^h, \ |\boldb^h| \geq 1, \ h \geq l+1 \\ \bolda \in \sh(\osh(\bolda^1,\ldots,\bolda^l),\sh(\bolda^{l+1},\ldots,\bolda^m))\\ \boldb \in \osh(\boldb^{l+1},\ldots,\boldb^m)}}\!\!\!\!\bfX^\bolda_{su}\bfX^{\boldb}_{ut} \\
			&= \!\!\!\! \sum_{(\boldc^1,\ldots,\boldc^m) \in A^p_1} \!\!\!\! \bfH^{k_1}_{\boldc^1;s} \cdots \bfH^{k_m}_{\boldc^m;s} \sum_{\substack{\boldc \in \osh(\boldc^1,\ldots,\boldc^m) \\ (\bolda, \boldb) = \boldc}}\bfX^\bolda_{su}\bfX^{\boldb}_{ut} \\
			&= (\xhapprox)_{st}^{\boldk}.
		\end{align*}
		
		We proceed with the proof of geometricity. Again using \eqref{eq:generalApprox} we have
		\begin{align*}
			&\mathrel{\phantom{\approx}}(\xhapprox)^\boldi_{st} (\xhapprox)^\boldj_{st} \\
			&\approx \!\!\!\!\sum_{\substack{|\bolda^1|, \ldots , |\bolda^m|,|\boldb^1|, \ldots , |\boldb^n| \geq 1 \\ |\bolda^1|+ \ldots + |\bolda^m| + |\boldb^1|+ \ldots + |\boldb^n| \leq \p \\ \bolda \in \osh(\bolda^1,\ldots,\bolda^m) \\ \boldb \in \osh(\boldb^1,\ldots,\boldb^n) }}\!\!\!\!\bfH^{i_1}_{\bolda^1;s} \cdots \bfH^{i_m}_{\bolda^m;s} \bfH^{j_1}_{\boldb^1;s} \cdots \bfH^{j_n}_{\boldb^n;s} \bfX^\bolda_{st} \bfX^\boldb_{st} \\
			&= \!\!\!\!\sum_{\substack{|\bolda^1|, \ldots , |\bolda^m|,|\boldb^1|, \ldots , |\boldb^n| \geq 1 \\ |\bolda^1|+ \ldots + |\bolda^m| + |\boldb^1|+ \ldots + |\boldb^n| \leq \p }}\!\!\!\!\bfH^{i_1}_{\bolda^1;s} \cdots \bfH^{i_m}_{\bolda^m;s} \bfH^{j_1}_{\boldb^1;s} \cdots \bfH^{j_n}_{\boldb^n;s} \\
			&\mathrel{\phantom{=}} \cdot \sum_{\boldc \in \sh(\osh(\bolda^1,\ldots,\bolda^m), \osh(\boldb^1,\ldots,\boldb^n))}\!\!\!\! \bfX^\boldc_{st} \\
			&= \sum_{\boldk \in \sh(\boldi, \boldj)} \sum_{(\boldc^1,\ldots,\boldc^{m+n}) \in A^p_1}\!\!\!\! \bfH^{k_1}_{\boldc^1;s} \cdots \bfH^{k_{m+n}}_{\boldc^{m+n};s} \!\!\!\!\sum_{\boldc \in \osh(\boldc^1,\ldots,\boldc^{m+n})}\!\!\!\! \bfX^\boldc_{st} \\
			&= \sum_{\boldk \in \sh(\boldi,\boldj)} (\xhapprox)_{st}^\boldk
		\end{align*}
		where we have used \autoref{lem:shsh} (with $m = 2$) in the second last identity. We may therefore apply \autoref{prop:geomAlm} to conclude the proof.
	\end{proof}
\end{thm}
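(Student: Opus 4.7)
My plan is to show that $\xhapprox$ is an almost rough path, so that \autoref{prop:geomAlm} produces the genuine weakly geometric rough path $\xhtrue$ via the extension theorem. Regularity in the sense of \autoref{def:rp} is immediate from the coordinate formula \eqref{eq:lastSumIn}, the regularity of $\bfX$, and the boundedness of the Gubinelli derivatives of $\bfH$. There are therefore two non-trivial properties to establish: almost multiplicativity and almost geometricity. In both, the strategy is to start from the explicit combinatorial expression \eqref{eq:lastSumIn}, reduce everything to expansions based at the left endpoint (using the defining controlled-path relation \eqref{eq:contrIntCoords} and the $\approx$-calculus \eqref{eq:approxProp}--\eqref{eq:generalApprox}), and then invoke exactly one of the combinatorial identities from \autoref{sec:alg} to recombine the resulting double-indexed sums back into the lift on $[s,t]$.

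For almost multiplicativity I would expand $\sum_{(\boldi,\boldj) = \boldk}(\xhapprox)^\boldi_{su}(\xhapprox)^\boldj_{ut}$ by splitting into three ranges according to which of $\boldi,\boldj$ is empty. The extremal terms ($\boldj = ()$ or $\boldi = ()$) contribute copies of $(\xhapprox)^\boldk_{su}$ and $(\xhapprox)^\boldk_{ut}$, while the genuinely mixed terms are products of Gubinelli derivatives evaluated at $s$ and at $u$, multiplied by $\bfX$-increments over $[s,u]$ and $[u,t]$. I would then use \eqref{eq:contrIntCoords} to re-expand every $\bfH_{\cdot;u}$ around $s$; the additional $\bfX$-increments over $[s,u]$ that this introduces combine with those already present via an unordered shuffle, because multiplication of $\bfX$-increments does not come with any ordering constraint. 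The result is a sum whose index set is precisely the right-hand side of \autoref{lem:orderedCompat}, where the integer $l$ records the number of fully-left blocks. Applying that lemma rewrites the sum with index set $\{(\boldi,\boldj)\,:\,\boldi\boldj \in \osh(\boldc^1,\dots,\boldc^m)\}$, after which the innermost $\sum_{(\boldi,\boldj) = \boldc}\bfX^\boldi_{su}\bfX^\boldj_{ut}$ collapses to $\bfX^\boldc_{st}$ by multiplicativity of $\bfX$. This matches $(\xhapprox)^\boldk_{st}$ from \eqref{eq:lastSumIn}.

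For almost geometricity I would directly form $(\xhapprox)^\boldi_{st}(\xhapprox)^\boldj_{st}$; here there is no expansion in time to perform, but the product of two $\bfX$-increments $\bfX^\bolda_{st}\bfX^\boldb_{st}$ must be linearised via the integration by parts axiom of $\bfX$, yielding $\sum_{\boldc \in \sh(\bolda,\boldb)}\bfX^\boldc_{st}$. Since the two factors $\bolda,\boldb$ were themselves ordered shuffles, the resulting indexing set takes the form $\sh(\osh(\boldc^1,\dots,\boldc^m),\osh(\boldc^{m+1},\dots,\boldc^{m+n}))$. This is exactly the left-hand side of \autoref{lem:shsh} in the case $m = 2$, $n_1 = |\boldi|$, $n_2 = |\boldj|$, and the lemma decomposes it as a disjoint union over $\pi \in \sh(|\boldi|,|\boldj|)$. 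The shuffles $\pi$ on positions correspond bijectively to the shuffles $\boldk \in \sh(\boldi,\boldj)$ of the external indices, and recollecting the Gubinelli derivatives with the new ordering reconstructs $\sum_{\boldk \in \sh(\boldi,\boldj)}(\xhapprox)^\boldk_{st}$ as required.

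The principal technical obstacle is bookkeeping: tracking the analytic $\approx_m$-order at every expansion step to confirm overall accuracy $\approx_{\p+1}$, and verifying that the combinatorial index sets produced by the two proofs literally coincide with the canonical sides of \autoref{lem:orderedCompat} and \autoref{lem:shsh}. A minor structural issue is the separate treatment of low-order components: $|\boldk| = 0,1$ are trivial, $|\boldk| = 2$ has no ``interior'' terms in the decomposition of the product, and $|\boldk| \geq 3$ introduces genuinely middle ($1 < l < m$) summands, so these cases deserve brief individual discussion. Once both properties are verified, \autoref{prop:geomAlm} yields the unique weakly geometric lift $\xhtrue$, with trace $H$ by construction from $(\xhapprox)^1_{st} = H_{st}$.
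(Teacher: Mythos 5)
Your proposal is correct and follows essentially the same route as the paper: decompose the deconcatenation sum by which blocks lie entirely in $[s,u]$, re-expand all Gubinelli derivatives at $s$ via \eqref{eq:contrIntCoords}, recombine the index sets using \autoref{lem:orderedCompat} and the multiplicativity of $\bfX$ for almost multiplicativity, and use the integration-by-parts axiom together with \autoref{lem:shsh} (with $m=2$) for almost geometricity, concluding with \autoref{prop:geomAlm}. The case distinctions you flag ($|\boldk|=0,1$ trivial, $|\boldk|=2$ without interior terms, $|\boldk|\geq 3$ with middle summands) are exactly those the paper treats separately.
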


This construction immediately yields a couple of important examples of rough path:
\begin{expl}[Pushforward of rough paths]
	Let $\bfX, F$ be as in \autoref{ex:FX}. We denote
	\begin{equation}
		F_*\bfX \coloneqq \uparrow_{\!\bfX} \!\! \overline{F}(X)
	\end{equation}
	and call it the \emph{pushforward} of $\bfX$ through $F$. This is a rough path with trace $F(X)$.
\end{expl}
\begin{expl}[Rough integrals as rough paths]\label{expl:roughIntLift}
	Let $\bfX, \bfH$ be as in \autoref{def:rint}. We denote $\boldsymbol \int \bfH \dif \bfX \coloneqq \uparrow_{\!\bfX} \!\! \overline \int \bfH \dif \bfX$.
\end{expl}

We can use these notions to reinterpret the following well-known fact about weakly geometric rough paths. Notice how, in particular, this implies that the rough integral of an exact 1-form is entirely determined by its trace. (Incidentally, arbitrary 1-forms do not require the whole rough path for the integral to be defined either, just the terms $(\odot_{n-1} \otimes \mathbbm 1)\bfX^n$ for $n = 1,\ldots,\p$.)
\begin{prop}[Change of variable formula]\label{prop:changeVarSimple}
	Let $\bfX$ be as above, $F \in C^\infty(V,W)$, then the following identity
	\begin{equation}
		\overline F(X) = F(X_0) + \overline \int \overline{DF} (X) \emph{d} \bfX
	\end{equation}
	holds in $\mathscr D_{\bfX}(W)$. Therefore, the corresponding identity in $\mathscr C^p_\omega([0,T],W)$ holds as well:
	\begin{equation}
		F_*\bfX = F(X_0) + \boldsymbol \int \overline{DF}(X) \emph{d} \bfX
	\end{equation}
	where in both cases the constant $F(X_0)$ is only added to the trace of the integral.
	\begin{proof}
		For the trace we have, by Taylor's formula and \autoref{prop:sym}
		\begin{align*}
			F(X)_{st} &\approx \sum_{n = 1}^\p \frac{1}{n!} \langle  D^nF(X_s), X^{\otimes n}_{st} \rangle \\ 
			&= \sum_{n = 1}^\p \langle  D^nF(X_s), \bfX^n_{st} \rangle \\
			&\approx \int_s^t \overline{DF}(X) \dif \bfX
		\end{align*}
		and therefore \cite[Theorem 3.3.1]{Lyo98} implies $F(X)_{0t} = \int_0^t \overline{DF}(X) \dif \bfX$. The other claims follow trivially.
	\end{proof}
\end{prop}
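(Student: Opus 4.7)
The plan is to verify the controlled-path identity in $\mathscr D_{\bfX}(W)$ by checking equality level by level, and then to read off the rough-path identity via the lift. At each level $n$ with $1 \leq n \leq \p - 1$, the Gubinelli derivative on the left is $\overline F(X)_n = D^n F(X)$ from \autoref{ex:FX}, while on the right, by \autoref{def:rint} applied to $\overline{DF}(X) \in \mathscr D_{\bfX}(\mathcal L(V,W))$ combined with \autoref{ex:FX} applied to $DF$, it is $\overline{DF}(X)_{n-1} = D^{n-1}(DF)(X) = D^n F(X)$. These agree immediately, so the only substantive check is at level $0$: namely, the trace identity $F(X)_{0t} = F(X_0) + \int_0^t \overline{DF}(X) \dif \bfX$.

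To establish this, I would invoke the uniqueness clause of \cite[Theorem 3.3.1]{Lyo98} recalled in \autoref{def:rint} and reduce to showing $F(X)_{st} \approx \Xi_{st}$, where $\Xi_{st} = \langle \overline{DF}(X)_s, \bfX^{\geq 1}_{st} \rangle$ is the defining compensator of the rough integral. A Taylor expansion of $F$ at $X_s$ up to order $\p$ gives
$$F(X_t) - F(X_s) = \sum_{n=1}^{\p} \frac{1}{n!} \langle D^n F(X_s), X_{st}^{\otimes n} \rangle + R_{st},$$
with $|R_{st}| \lesssim |X_{st}|^{\p+1} \lesssim \omega(s,t)^{(\p+1)/p}$, so $R_{st} \approx 0$. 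Since $D^n F(X_s)$ is a symmetric $n$-multilinear form, it factors through the symmetrisation map $\odot_n$, and \autoref{prop:sym} gives $X_{st}^{\otimes n} = n!\,\odot_n \bfX_{st}^n$; hence $\frac{1}{n!}\langle D^n F(X_s), X_{st}^{\otimes n} \rangle = \langle D^n F(X_s), \bfX_{st}^n \rangle$, and summing over $n = 1, \ldots, \p$ yields precisely $\Xi_{st}$ under the identification $\overline{DF}(X)_{n-1;s} = D^n F(X_s)$. The uniqueness of the rough integral then gives the trace identity.

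The rough-path identity $F_*\bfX = F(X_0) + \boldsymbol \int \overline{DF}(X) \dif \bfX$ follows at once from \autoref{def:lift}: both sides are constructed by applying $\uparrow_{\bfX}$ to the same element of $\mathscr D_{\bfX}(W)$, and the additive constant $F(X_0)$ is invisible to the lift, which sees the trace only through its increments and otherwise depends only on the Gubinelli derivatives. I do not anticipate any significant obstacle here; the only delicate point is deploying \autoref{prop:sym} together with the symmetry of $D^n F(X_s)$ to convert Taylor expansion in $X$ into evaluation against $\bfX$, which is routine.
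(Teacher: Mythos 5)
Your proposal is correct and follows essentially the same route as the paper: the trace identity is obtained by Taylor expansion at $X_s$, conversion of $X_{st}^{\otimes n}/n!$ into $\bfX^n_{st}$ via \autoref{prop:sym} and the symmetry of $D^nF$, and the uniqueness clause of \cite[Theorem 3.3.1]{Lyo98}. Your explicit level-by-level check of the Gubinelli derivatives and the remark that the lift only sees the trace through its increments are exactly the content the paper compresses into \say{the other claims follow trivially}.
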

\begin{rem}
	A similar formula would hold for more general controlled paths, i.e.\
	\begin{equation}
		\bfH = H_0 + \int \bfH' \dif \bfX
	\end{equation}
	(together with its rough path counterpart, given by passing to the $\uparrow_{\!\bfX}$ on both sides), provided that we have a $\p$-th Gubinelli derivative, needed to define the controlled integrand $\bfH'$.
\end{rem}

We would now like to show that a path controlled by the lift of a controlled path is controlled by the original rough path in a canonical fashion. 
\begin{defn}[Change of controlling rough path]\label{def:change}
	Let $\bfX \in \mathscr C^p_\omega([0,T],V)$, $\bfH \in \mathscr D_{\bfX}(U)$, $S$ another vector space, $\boldsymbol H \coloneqq \xhtrue$ and $\bfK \in \mathscr D_{\boldsymbol H}(S)$. We then define
	\begin{equation}\label{eq:k*h}
		(\bfK * \bfH)_n \coloneqq \sum_{m = 1}^{n} \bfK_m \circ \bfH{}^{\boxtimes m} \circ \wDOshuffle^m|_{V^{\otimes n}}
	\end{equation}
\end{defn}
which for $n = 0$ reduces to $(\bfK * \bfH)_0 \coloneqq K$. In coordinates this means
\begin{equation}
	(\bfK * \bfH)^c_\boldc \coloneqq \!\!\!\! \sum_{\substack{m = 1,\ldots,|\boldc| \\  (\boldc^1,\ldots,\boldc^m) \in \osh^{-1}(\boldc) \\ |\boldc^1|,\ldots,|\boldc^m| \geq 1  }} \!\!\!\!  \bfK^c_\boldk \bfH^{k_1}_{\boldc^1} \cdots \bfH^{k_m}_{\boldc^m}, \quad |\boldc| \geq 1
\end{equation}
and $(\bfK * \bfH)^c_{()} = K^c$.

In \autoref{prop:K*H} below we show that this defines a controlled path. The next example features a case in which the reduced ordered shuffle coproduct can be replaced with its unordered counterpart; this however is not the general case.
\begin{expl}\label{expl:FH}
	Let $\bfX,\bfH,\boldsymbol H, \bfK$ be as above, with $\bfK \coloneqq \overline F(H)$ for $F \in C^\infty(U,S)$. Then, since $\bfK_m$ is symmetric we may rewrite \eqref{eq:k*h} by using the unordered shuffle coproduct: by \autoref{thm:osym}
	\begin{equation}
		(\overline F(H) * \bfH)_n = \sum_{m \geq 1} \frac{1}{m!} D^m F(H) \circ \bfH^{\boxtimes m} \circ \wDshuffle^m|_{V^{\otimes n}}, \quad n \geq 1
	\end{equation}
	or in coordinates
	\begin{equation}
		(\overline F(H) * \bfH)^c_\boldc \coloneqq \!\!\!\! \sum_{\substack{m = 1,\ldots,|\boldc|  \\ (\boldc^1,\ldots,\boldc^m) \in \sh^{-1}(\boldc) \\ |\boldc^1|,\ldots,|\boldc^m| \geq 1 }} \!\!\!\! \frac{1}{m!}  \partial_\boldk F^c(H) \bfH^{k_1}_{\boldc^1} \cdots \bfH^{k_m}_{\boldc^m}, \quad |\boldc| \geq 1
	\end{equation}
	When $\bfH$ is also given by a smooth function this is known as the Faà di Bruno formula for the higher derivatives of a composition of functions
	\begin{equation}\label{eq:faa}
		\partial_{\boldc}(F \circ G)^c(X) \coloneqq \!\!\!\! \sum_{\substack{m = 1,\ldots,|\boldc|  \\ (\boldc^1,\ldots,\boldc^m) \in \sh^{-1}(\boldc) \\ |\boldc^1|,\ldots,|\boldc^m| \geq 1 }} \!\!\!\! \frac{1}{m!}  \partial_\boldk F^c(H) \partial_{\boldc^1}G^{k_1} \cdots \partial_{\boldc^m}G^{k_m}(X).
	\end{equation}
	We will denote the $\bfX$-controlled path $\overline F(H) * \bfH \coloneqq F_*\bfH \in \mathscr D_{\bfX}(S)$ and call it the \emph{pushforward} of $\bfH$ through $F$. Note how this is distinct from $\overline F(H) \in \mathscr D_{\boldsymbol H}(S)$, and note how $F_* \overline X = \overline F(X)$ where $\overline X$ denotes the controlled path $X$ with zero Gubinelli derivatives. An easy application of \autoref{prop:changeVarSimple} shows the following change of variable formula for controlled paths:
	\begin{equation}
		F_*\overline H = \bigg( F(H_0) + \overline \int \overline{DF}(H) \dif (\uparrow_{\!\bfX} \!\! \bfH ) \bigg) * \bfH.
	\end{equation}
\end{expl}

\begin{prop}\label{prop:K*H}
	The map $\bfK * \bfH \in \mathcal L(T^{\p-1}(V),S)$ of \autoref{def:change} is an element of $\mathscr D_{\bfX}(S)$.
	\begin{proof}
		Clearly $\bfK * \bfH \in \mathcal C^p_\omega([0,T],\mathcal L(T^{\p - 1}(V),U))$, so it remains to show \eqref{eq:contrDefCoords}. We preliminarily write
		\begin{equation}\label{eq:Kcj}
			\begin{split}
				\bfK^c_{\boldj;st} &\approx_{\p - |\boldj|} \sum_{|\boldi| = 1}^{\p - |\boldj| - 1} \bfK^c_{(\boldi, \boldj);s} \boldsymbol H^{\boldi}_{st} \\
				&\approx \!\!\!\!\sum_{\substack{m = 1,\ldots, \p - |\boldj| - 1 \\   (\bolda^1,\ldots,\bolda^m) \in A^p_1 \\ \bolda \in \osh(\bolda^1,\ldots,\bolda^m)}}\!\!\!\! \bfK^c_{(\boldi, \boldj);s} \bfH^{i_1}_{\bolda^1;s} \cdots \bfH^{i_m}_{\bolda^m;s} \bfX^\bolda_{st}
			\end{split}
		\end{equation}
		and
		\begin{equation}\label{eq:Hjb}
			\bfH^j_{\boldd;st} \approx_{\p - |\boldd|} \sum_{|\bolda| = 1}^{\p - |\boldd| - 1} \bfH^j_{(\bolda,\boldd);s} \bfX^\bolda_{st}.
		\end{equation}
		Now, let $1 \leq |\boldb| \leq \p - 2$:
		\begin{align*}
			&\mathrel{\phantom{=}}(\bfK * \bfH)^c_{\boldb;t} \\
			&= \!\!\!\! \sum_{\substack{n = 1,\ldots,|\boldb| \\ (\boldb^1,\ldots,\boldb^n) \in A_1 \cap \osh^{-1}(\boldb) }} \!\!\!\!  \bfK^c_{\boldj;t} \bfH^{j_1}_{\boldb^1;t} \cdots \bfH^{j_n}_{\boldb^n;t} \\
			&= \!\!\!\! \sum_{\substack{n = 1,\ldots,|\boldb| \\ (\boldb^1,\ldots,\boldb^n) \in A_1 \cap \osh^{-1}(\boldb) }} \!\!\!\!  (\bfK^c_{\boldj;s}+\bfK^c_{\boldj;st}) (\bfH^{j_1}_{\boldb^1;s} + \bfH^{j_1}_{\boldb^1;st}) \cdots (\bfH^{j_n}_{\boldb^n;s} + \bfH^{j_n}_{\boldb^n;st}) \\
			&\approx_{\p - |\boldb|} \!\!\!\! \sum_{\substack{n = 1,\ldots,|\boldb| \\ (\boldb^1,\ldots,\boldb^n) \in A_1 \cap \osh^{-1}(\boldb)}} \!\!\!\!  \Big( \bfK^c_{\boldj;s} \bfH^{j_1}_{\boldb^1;s} \cdots \bfH^{j_n}_{\boldb^n;s} + \!\!\!\!\sum_{\substack{\epsilon_0,\ldots,\epsilon_n = 0,1 \\ (\epsilon_0,\ldots,\epsilon_n) \neq (0,\ldots,0)}} \!\!\!\! \xi_{\epsilon_0} \eta^1_{\epsilon_1} \cdots \eta^n_{\epsilon_n} \Big)
		\end{align*}
		where $\xi_0 \coloneqq \bfK^c_{\boldj;s}$, $\eta^l_0 \coloneqq \bfH^{j_l}_{\boldb^l;s}$, and $\xi_1,\eta_1^l$ are given by the right hand sides of \eqref{eq:Kcj} and \eqref{eq:Hjb} (with $\boldd \coloneqq \boldb^l$) respectively; the $\approx_{\p - |\boldb|}$ in the last line above holds since $|\boldj|,|\boldb^l| \leq |\boldb|$ and the $\epsilon_l$'s are not all zero. We can rewrite this as
		\begin{align*}
			&\mathrel{\phantom{\approx}}(\bfK * \bfH)^c_{\boldb;t} \\
			&\approx_{\p - |\boldb|} \!\!\!\!\sum_{\substack{n = 1,\ldots,|\boldb| \\ (\boldb^1,\ldots,\boldb^n) \in A_1 \cap \osh^{-1}(\boldb) \\ m = 0,\ldots, \p - n - 1 \\ (\bolda^1,\ldots,\bolda^m) \in A^p_1 \\ \boldd \in \osh(\bolda^1,\ldots,\bolda^m) \\ \bolda^{m+l} = 0,\ldots,\p - |\boldb^l| -1}}\!\!\!\! (\bfK^c_{(\boldi, \boldj);s} \bfH^{i_1}_{\bolda^1;s} \cdots \bfH^{i_m}_{\bolda^m;s} \bfX^\boldd_{st}) \cdot \\
			&\mathrel{\hphantom{\approx_{\p - |\boldb|} \!\!\!\!\sum_{\bolda^{m+l} = 0,\ldots,\p - |\boldb^l| -1}\!\!\!\!}} \cdot(\bfH^{j_1}_{(\bolda^{m+1},\boldb^1);s} \bfX^{\bolda^{m+1}}_{st}) \cdots (\bfH^{j_n}_{(\bolda^{m+n},\boldb^n);s} \bfX^{\bolda^{m+n}}_{st}) \\[1em]
			&\approx_{\p - |\boldb|} \!\!\!\!\!\!\!\!\sum_{\substack{q = 1,\ldots,\p-1 \\ n = 1,\ldots,|\boldb| \\ |\bolda^1|, \ldots, |\bolda^{q-n}| \geq 1 \\ |\bolda^1|+ \ldots + |\bolda^q| \leq \p - |\boldb| - 1 \\ \bolda \in  \sh(\osh(\bolda^1,\ldots,\bolda^{q-n}), \sh(\bolda^{q-n+1},\ldots,\bolda^q)) \\ (\boldb^1,\ldots,\boldb^n) \in A_1 \cap \osh^{-1}(\boldb) \\  \boldc^l = \bolda^l, \ l \leq q-n; \ \boldc^l = (\bolda^l,\boldb^l), \ l \geq q-n + 1}}\!\!\!\!\!\!\!\! \bfK^c_{\boldk;s} \bfH^{k_1}_{\boldc^1;s} \cdots \bfH^{k_{q}}_{\boldc^{q};s} \bfX_{st}^\bolda \\
			&= \sum_{|\bolda| = 0}^{\p - |\boldb| - 1} \bfK^c_{\boldk;s} \Big( \!\!\!\!\!\!\!\!\!\!\!\!\sum_{\substack{q = 1,\ldots,|\bolda| + |\boldb| \\ l = 0,\ldots,q \\ ((\bolda^1,\ldots,\bolda^l),(\bolda^{l+1},\ldots,\bolda^q)) \in (\osh^{-1},\sh^{-1})(\sh^{-1}(\bolda)) \\ (\boldb^{l+1},\ldots,\boldb^q) \in A_1 \cap \osh^{-1}(\boldb) \\ \boldc^h = \bolda^h, \ |\bolda^h| \geq 1, \ h \leq l;\ \boldc^h = (\bolda^h,\boldb^h), \ |\boldb^h| \geq 1, \ h \geq l + 1 }}\!\!\!\!\!\!\!\!\!\!\!\! \bfH^{k_1}_{\boldc^1;s} \cdots \bfH^{k_{q}}_{\boldc^{q};s} \Big) \bfX_{st}^\bolda \\
			&=\sum_{|\bolda| = 0}^{\p - |\boldb| - 1} \Big(\!\!\!\!\sum_{(\boldc^1,\ldots,\boldc^q) \in A_1 \cap \osh^{-1}(\bolda,\boldb) } \!\!\!\! \bfK^c_{\boldk;s} \bfH^{k_1}_{\boldc^1;s} \cdots \bfH^{k_{q}}_{\boldc^{q};s} \Big) \bfX_{st}^\bolda \\
			&= \sum_{|\bolda| = 0}^{\p - |\boldb| - 1} (\bfK * \bfH)^c_{(\bolda,\boldb);s} \bfX^\bolda_{st}
		\end{align*}
		as needed. The second-last identity above is given by \autoref{cor:dualReduced}. The proof of \eqref{eq:contrDefCoords} for $(\bfK * \bfH)_0 = K$ is a much simplified version of the proof above.
	\end{proof}
\end{prop}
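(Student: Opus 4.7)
The plan is to verify the controlled path condition
\[
(\bfK * \bfH)^c_{\boldb;t} \approx_{\p - |\boldb|} \sum_{|\bolda| = 0}^{\p - 1 - |\boldb|} (\bfK * \bfH)^c_{(\bolda,\boldb);s} \bfX^{\bolda}_{st}
\]
by directly expanding the left-hand side using \autoref{def:change}, then substituting the defining approximations for $\bfK$ and $\bfH$, and finally applying the combinatorial identity of \autoref{cor:dualReduced} to group the result in the desired form. Regularity of $\bfK * \bfH$ as a path in $\mathcal L(T^{\p - 1}(V),S)$ is immediate from the regularity of $\bfK$, $\bfH$, and $\bfX$, so only the Taylor-type condition requires work.

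First I would write $(\bfK * \bfH)^c_{\boldb;t}$ as a sum of products of one $\bfK^c_{\boldj;t}$ and several $\bfH^{j_l}_{\boldb^l;t}$, indexed by ordered shuffle preimages of $\boldb$. I would then split each factor as its value at $s$ plus its $st$-increment. The product of all the $s$-values yields the $|\bolda|=0$ contribution on the right-hand side, while in every mixed term at least one $st$-increment appears. Using \eqref{eq:approxProp} and \eqref{eq:generalApprox}, only the first-order expansion of each increment (via the controlled path condition) contributes modulo $O(\omega(s,t)^{(\p - |\boldb|)/p})$: specifically, $\bfK^c_{\boldj;st}$ is expanded using that $\bfK \in \mathscr D_{\boldsymbol H}(S)$, and the appearing $\boldsymbol H^{\boldi}_{st}$ is then re-expanded via the explicit formula for $\xhapprox$ (which agrees with $\boldsymbol H$ up to $\approx$), contributing $\bfH^{i_1}_{\bolda^1;s} \cdots \bfH^{i_m}_{\bolda^m;s} \bfX^{\boldd}_{st}$ with $\boldd \in \osh(\bolda^1, \ldots, \bolda^m)$; meanwhile each $\bfH^{j_l}_{\boldb^l;st}$ is expanded using that $\bfH \in \mathscr D_{\bfX}(U)$, contributing $\bfH^{j_l}_{(\bolda^{m+l},\boldb^l);s} \bfX^{\bolda^{m+l}}_{st}$.

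At this point I would collect the resulting product of several $\bfX^{\cdot}_{st}$ coordinates into a single $\bfX^{\bolda}_{st}$ using the geometricity (integration by parts) axiom. The resulting indexing is exactly of the form $\bolda \in \sh(\osh(\bolda^1, \ldots, \bolda^{q-n}), \sh(\bolda^{q-n+1}, \ldots, \bolda^q))$, where the ordered-shuffle block comes from $\boldsymbol H^{\boldi}_{st}$ and the unordered-shuffle block from the standalone $\bfX^{\bolda^{m+l}}_{st}$ factors, together with the split index $l$ recording which of the original blocks $\boldb^l$ are untouched versus split into $(\bolda^{m+l}, \boldb^l)$. This is precisely the right-hand side of the multiset identity in \autoref{cor:dualReduced}, so applying that corollary turns the combined sum into a single sum over $(\boldc^1, \ldots, \boldc^q) \in A_1 \cap \osh^{-1}(\bolda, \boldb)$ of $\bfK^c_{\boldk;s} \bfH^{k_1}_{\boldc^1;s} \cdots \bfH^{k_q}_{\boldc^q;s} \bfX^{\bolda}_{st}$, which is exactly $(\bfK * \bfH)^c_{(\bolda,\boldb);s} \bfX^{\bolda}_{st}$.

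The analytic part, tracking which cross-terms survive the $\approx_{\p - |\boldb|}$ threshold, is entirely handled by observing that every factor appearing with an $st$-increment contributes a power of $\omega(s,t)^{1/p}$ at least equal to the number of reduced indices lost, and the bookkeeping is identical in spirit to the analogous step in the proof of \autoref{thm:almost}. The main obstacle is purely combinatorial: ensuring that the shuffle structure produced by the geometricity of $\bfX$, combined with the ordered-shuffle structure inherent to $\boldsymbol H$, reassembles into the single ordered-shuffle sum that defines $(\bfK * \bfH)^c_{(\bolda,\boldb);s}$. This is precisely the content of \autoref{cor:dualReduced}, and identifying this as the key step is what makes the proof clean. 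The case $\boldb = ()$ (i.e.\ the condition for $(\bfK*\bfH)_0 = K$) is a special case obtained by skipping the block decomposition of $\boldb$ and runs identically but with fewer moving pieces.
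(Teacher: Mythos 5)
Your proposal is correct and follows essentially the same route as the paper's proof: expand $(\bfK*\bfH)^c_{\boldb;t}$ over ordered shuffle preimages of $\boldb$, split each factor into its value at $s$ plus increment, substitute the controlled-path expansions for $\bfK$ (w.r.t.\ $\boldsymbol H$, then re-expanded via the explicit lift formula) and for $\bfH$ (w.r.t.\ $\bfX$), recombine the $\bfX$-factors via geometricity into a single $\bfX^{\bolda}_{st}$ indexed by $\sh(\osh(\cdot),\sh(\cdot))$, and close with \autoref{cor:dualReduced}. You have correctly identified both the key combinatorial lemma and the analytic bookkeeping, so nothing further is needed.
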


Another application of the change of controlling path construction is a Leibniz rule for controlled paths.
\begin{defn}[Leibniz rule for controlled paths]
	Let $\bfX \in \mathscr C^p_\omega([0,T],V)$, $U_i$ be vector spaces for $i = 1,2,3$, $\bfH \in \mathscr D_{\bfX}(\mathcal L(U_1,U_2))$, $\bfK \in \mathscr D_{\bfX}(\mathcal L(U_2,U_3))$. Define $(\bfK \cdot \bfH)_0 \coloneqq K \circ H$ and for $n = 1,\ldots,\p - 1$
	\begin{equation}
		(\bfK \cdot \bfH)_n \coloneqq \times \circ (\bfK \boxtimes \bfH) \circ \Dshuffle
	\end{equation}
	where $\times \colon \mathcal L(U_2,U_3) \boxtimes \mathcal L(U_1,U_2) \to \mathcal L(U_1,U_3)$ is ordinary composition of linear maps.
\end{defn}
In coordinates
\begin{equation}\label{eq:leibnizCoords}
	(\bfK \cdot \bfH)_{\boldc}^{i \choose j} = \sum_{(\bolda,\boldb) \in \sh^{-1}(\boldc)} \bfK^{i \choose k}_{\bolda} \bfH^{k \choose j}_{\boldb} .
\end{equation}
Here we are using the notation $h \choose l$ to denote indices in spaces of linear maps, the upper index referring to the codomain and the lower to the domain; this allows us to use the Einstein convention on such superscripts. When the domain coincides with $V$ (i.e.\ it is a controlled integrand) we will often place it after the bottom tuple, e.g.\ $\bfH^{k \choose \delta}_\boldc = \bfH^k_{(\boldc, \delta)}$, as done previously. The presence of the unreduced $\Dshuffle$ in the previous definition might seem strange at first, but it is easily justified as follows:
\begin{prop}\label{prop:leibniz}
	$\bfK \cdot \bfH$ defines an element of $\mathscr D_{\bfX}(\mathcal L(U_1,U_3))$ which coincides with $ \overline\times(K,H) * (\bfK,\bfH)$. In particular, if $\bfH = \overline A(X)$, $\bfK = \overline B(X)$ for smooth functions $A,B$ then 
	\begin{equation}
		\bfH \cdot \bfK = \overline{A(\cdot)B(\cdot)}(X)
	\end{equation}
	with $A(\cdot)B(\cdot)$ denoting the function $x \mapsto A(x)B(x)$.
	\begin{proof}
		We apply \autoref{expl:FH} with the function $\times$ and the controlled path $(\bfK,\bfH) \in \mathscr D_{\bfX} (\mathcal L(U_2,U_3) \oplus \mathcal L(U_1,U_2))$. Denoting by $h \choose \cdot$ coordinates for the second direct summand and with $\cdot \choose k$ those for the first, we have
		\begin{align*}
			\times^{i \choose j} (\kappa,\eta) &= \eta^{i \choose l} \kappa^{l \choose j}\\
			\partial_{h \choose p} \times^{i \choose j} (\kappa,\eta) &= \delta^{ih} \kappa^{p \choose j}, \quad \partial_{q \choose k} \times^{i \choose j} (\kappa,\eta) = \delta^{jk} \eta^{i \choose q} \\
			\partial_{{h \choose p},{p \choose k}}\times^{i \choose j} (\kappa,\eta) &= \delta^{jk}\delta^{hi} = \partial_{{p \choose k},{h \choose p}}\times^{i \choose j} (\kappa,\eta)
		\end{align*}
		and all other derivatives vanish. Therefore
		\begin{align*}
			( \overline\times(K,H) * (\bfK,\bfH))^{i \choose j}_{\boldc} = \bfK^{i \choose l}_\boldc H^{l \choose j} + K^{i \choose l} \bfH^{l \choose j}_\boldc + \!\!\sum_{\substack{(\bolda, \boldb) \in \sh^{-1}(\boldc) \\ |\bolda|, |\boldb| \geq 1}}\!\! \bfK^{i \choose l}_\bolda \bfH^{l \choose k}_\boldb.
		\end{align*}
		The factor $1/2!$ is not present in the sum, since each non-vanishing second derivative is counted twice, as emphasised above. This expression coincides with \eqref{eq:leibnizCoords}. The last statement holds since $A(\cdot)B(\cdot) = \times (A,B)$.
	\end{proof}
\end{prop}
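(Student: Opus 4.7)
The plan is to identify $\bfK \cdot \bfH$ with the pushforward construction $\overline\times(K,H) * (\bfK,\bfH)$, where $(\bfK,\bfH)$ is regarded as a controlled path in $\mathscr D_{\bfX}(\mathcal L(U_2,U_3) \oplus \mathcal L(U_1,U_2))$ and $\times$ is the bilinear composition map of the statement. Once this identification is made, membership in $\mathscr D_{\bfX}(\mathcal L(U_1,U_3))$ follows immediately from \autoref{prop:K*H}, because $\overline\times(K,H) * (\bfK,\bfH)$ is a pushforward as in \autoref{expl:FH}.

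The key observation driving the identification is that $\times$ is bilinear, so every partial derivative of $\times$ of order $\geq 3$ vanishes, and only the first-order and second-order derivatives contribute to the Faà di Bruno-type formula of \autoref{expl:FH}. I would first compute these derivatives in coordinates: using the index convention $\binom{\cdot}{\cdot}$ from the excerpt, only $\partial_{h\choose p}\times^{i\choose j}$, $\partial_{q\choose k}\times^{i\choose j}$ and the two (equal) mixed second derivatives $\partial_{{h\choose p}{p\choose k}}\times^{i\choose j}$, $\partial_{{p\choose k}{h\choose p}}\times^{i\choose j}$ are non-zero, and each is a product of Kronecker deltas. Substituting these into the formula for $(\overline\times(K,H) * (\bfK,\bfH))^{i\choose j}_{\boldc}$, the $m=1$ terms contribute the two boundary terms $K^{i\choose \ell}\bfH^{\ell\choose j}_{\boldc}$ and $\bfK^{i\choose \ell}_{\boldc} H^{\ell\choose j}$ (corresponding to $\bolda=()$ or $\boldb=()$), while the $m=2$ term produces $\sum_{(\bolda,\boldb)\in\sh^{-1}(\boldc),\, |\bolda|,|\boldb|\geq 1} \bfK^{i\choose \ell}_{\bolda}\bfH^{\ell\choose j}_{\boldb}$, with the prefactor $1/2!$ being absorbed by the symmetry of the second derivative (both index orderings producing the same value). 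Combining these recovers exactly $\sum_{(\bolda,\boldb)\in\sh^{-1}(\boldc)} \bfK^{i\choose \ell}_{\bolda}\bfH^{\ell\choose j}_{\boldb}$, which is $(\bfK\cdot\bfH)^{i\choose j}_{\boldc}$ by \eqref{eq:leibnizCoords}; this also clarifies why the unreduced $\Dshuffle$ is the natural choice in the definition.

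For the final assertion about $\bfH=\overline A(X)$, $\bfK=\overline B(X)$, the cleanest route is to observe that $\overline{A(\cdot)B(\cdot)}(X) = \overline{\times\circ(B,A)}(X) = \times_*(\overline B(X),\overline A(X))$ by the smooth change of variables in \autoref{ex:FX} and the naturality embodied in the pushforward construction of \autoref{expl:FH}; together with the identification just established, this gives $\overline B(X)\cdot\overline A(X) = \overline{A(\cdot)B(\cdot)}(X)$. Alternatively, one can verify the identity directly from \eqref{eq:faa}, recognising $\partial_{\boldc}(AB) = \sum_{(\bolda,\boldb)\in\sh^{-1}(\boldc)} \partial_{\bolda}B\,\partial_{\boldb}A$ as the higher-order Leibniz rule.

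The main obstacle is purely bookkeeping: confirming that the Kronecker deltas from the derivatives of $\times$ contract the summation indices in exactly the way needed to line up with \eqref{eq:leibnizCoords}, and that the $1/2!$ is neutralised precisely because both orderings of the mixed second derivative contribute identically. There is no genuine analytic difficulty, because the heavy lifting (showing that $*$ preserves controlled paths) has already been done in \autoref{prop:K*H}.
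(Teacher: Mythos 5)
Your proposal follows exactly the paper's argument: apply \autoref{expl:FH} to the bilinear map $\times$ and the joint controlled path $(\bfK,\bfH)$, note that only the first and the two (equal) mixed second derivatives of $\times$ survive so the $1/2!$ is absorbed, and match the result with \eqref{eq:leibnizCoords}, with the final claim following from $A(\cdot)B(\cdot)=\times(A,B)$. The proof is correct and takes essentially the same route as the paper.
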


Next we define a notion of pullback for controlled integrands.
\begin{defn}
	Let $\bfX$ be as above, $F \in C^{\infty}(V,W)$, $\bfH \in \mathscr D_{F_*\bfX}(\mathcal L(W,U))$. Let
	\begin{equation}
		F^*\bfH \coloneqq (\bfH * \overline F(X)) \cdot \overline{DF}(X) \in \mathscr D_{\bfX}(\mathcal L(V,U)).
	\end{equation}
\end{defn}
We will not need the coordinate expression of the pullback of a controlled path, although it can still be derived as done in other cases. The next proposition reassures us of the compatibility and associativity of some of the operations defined up to now.
\begin{prop}\label{prop:properties}
	\begin{enumerate}
		\item Let $\bfX,\bfH,\boldsymbol H,\bfK$ be as in \autoref{expl:FH}, then
		\begin{equation}
			\uparrow_{\!\bfX}\!\!(\bfK * \bfH) = \uparrow_{\!\boldsymbol H}\!\! \bfK \in \mathscr C^p_\omega([0,T],S).
		\end{equation}
		In particular lifting commutes with pushforwards 
		\begin{equation}
			F_* (\uparrow_{\!\bfX} \!\!\bfH) = \uparrow_{\!\bfX} \!\! (F_* \bfH)
		\end{equation}
		and furthermore $
		F_*(G_*\bfX) = (F \circ G)_* \bfX$ for appropriately valued smooth functions $F,G$;
		\item $(\overline J *\bfK) *\bfH \eqqcolon \overline J *\bfK *\bfH \coloneqq \overline J *(\bfK *\bfH)$ for appropriately valued controlled paths $\bfH,\bfK,\overline J$, and in particular $F_*G_* \bfH = (F \circ G)_* \bfH$ for appropriately valued smooth functions $F,G$;
		\item $(\overline J \cdot \bfK) * \bfH = (\overline J * \bfH) \cdot (\bfK * \bfH)$ for appropriately valued controlled paths $\bfH,\bfK,\overline J$; in particular, taking $\overline J = \overline A(H)$, $\bfK = \overline B(H)$ we have $(A(\cdot)B(\cdot))_*\bfH = A_*\bfH \cdot B_*\bfH$;
		\item $(\overline J \cdot \overline K) \cdot \overline H \eqqcolon \overline J \cdot \overline K \cdot \overline H \coloneqq \overline J \cdot (\overline K \cdot \overline H)$ for appropriately valued controlled paths $\bfH,\bfK,\overline J$;
		\item $F^*(G^* \bfH) = (G \circ F)^* \bfH$ for appropriately valued smooth maps $F,G$.
	\end{enumerate}
	\begin{proof}
		We begin with 1.:
		\begin{align*}
			&\mathrel{\phantom{=}}\upharpoonleft_{\!\bfX}\!\!(\bfK * \bfH)^{(c_1,\ldots,c_m)}_{st} \\
			&= \sum_{\substack{(\boldc^1,\ldots,\boldc^m) \in A^p_1 \\ \boldc \in \osh(\boldc^1,\ldots,\boldc^m)}} (\bfK * \bfH)^{c_1}_{\boldc^1;s} \cdots (\bfK * \bfH)^{c_m}_{\boldc^m;s} \bfX_{st}^\boldc \\
			&= \!\!\!\!\sum_{\substack{(\boldc^1,\ldots,\boldc^m) \in A^p_1 \\ \boldc \in \osh(\boldc^1,\ldots,\boldc^m) \\ n_l = 1,\ldots, |\boldc^l| \\ (\boldc^{l1}, \ldots, \boldc^{ln_{\scaleto{l}{2.5pt}}}) \in \osh^{-1}(\boldc^l)}} \!\!\!\! (\bfK^{c_1}_{\boldk^1;s} \bfH^{k^1_1}_{\boldc^{11};s} \cdots \bfH^{k^1_{n_{\scaleto{1}{2.5pt}}}}_{\boldc^{1{n_{\scaleto{1}{2.5pt}}}};s}) \cdots (\bfK^{c_m}_{\boldk^m;s} \bfH^{k^m_1}_{\boldc^{m1};s} \cdots \bfH^{k^m_{n_{\scaleto{m}{1.8pt}}}}_{\boldc^{m{n_{\scaleto{m}{1.8pt}}}};s})  \bfX_{st}^\boldc  \\
			&= \sum_{\substack{(\boldk^1,\ldots,\boldk^m) \in A^p_1 \\ \boldh \in \osh(\boldk^1,\ldots,\boldk^m)}} \bfK^{c_1}_{\boldk^1;s}\cdots\bfK^{c_m}_{\boldk^m;s} \sum_{\substack{(\boldd^1,\ldots,\boldd^q) \in A^p_1 \\ \boldd \in \osh(\boldd^1,\ldots,\boldd^q)}} \bfH^{h_1}_{\boldd^1;s}\cdots \bfH^{h_q}_{\boldd^q;s} \bfX^\boldd_{st} \\
			&= (\upharpoonleft_{\!{\boldsymbol H}}\!\!\bfK)^{c_1,\ldots,c_m}
		\end{align*}
		and the statement follows by \cite[Theorem 3.3.1]{Lyo98}. As for the second statement
		\[
		F_*(\uparrow_{\! \bfX} \!\! \bfH) = \uparrow_{\!\boldsymbol H}\!\! \overline F (H) = \uparrow_{\!\bfX} \!\!(\overline F(H) * \bfH) =  \uparrow_{\!\bfX} \!\! (F_*\bfH)
		\]
		and
		\[
		F_*(G_*\bfX) = F_*(\uparrow_{\!\bfX}\!\! \overline G(X)) = \uparrow_{\!\bfX}\!\!(F_* \overline G(X)) = \uparrow_{\!\bfX}\!\!(F_* G_* \overline X) = \uparrow_{\!\bfX}\!\!((F \circ G)_* \overline X) 
		\]
		(where $\overline X$ is the $\bfX$-controlled path with trace $X$ and zero Gubinelli derivatives) and the conclusion is implied by 2.\ below.
		
		The proof of 2.\ is straightforward (with the second claim deduced from the Faà di Bruno formula \eqref{eq:faa}).
		
		As for 3., we have
		\begin{align*}
			((\overline J \cdot \bfK) * \bfH))_\boldc^{a \choose b} &= \!\!\!\! \sum_{\substack{m = 1,\ldots,|\boldc| \\  (\boldc^1,\ldots,\boldc^m) \in A_1 \cap \osh^{-1}(\boldc)   }} \!\!\!\!  (\overline J \cdot \bfK)^{a \choose b}_\boldk \bfH^{k_1}_{\boldc^1} \cdots \bfH^{k_m}_{\boldc^m} \\
			&= \!\!\!\! \sum_{\substack{m = 1,\ldots,|\boldc| \\  (\boldc^1,\ldots,\boldc^m) \in A_1 \cap \osh^{-1}(\boldc) \\ (\boldi, \boldj) \in \sh^{-1}(\boldk)}} \!\!\!\!  \overline J^{a \choose c}_{\boldi} \bfK^{c \choose b}_\boldj \bfH^{k_1}_{\boldc^1} \cdots \bfH^{k_m}_{\boldc^m}\\
			&= \!\!\!\! \sum_{\substack{l+q = 1,\ldots,|\boldc| \\  (\bolda, \boldb) \in \sh^{-1}(\boldc)\\ (\bolda^1,\ldots,\bolda^l) \in A_1 \cap \osh^{-1}(\bolda) \\  (\boldb^1,\ldots,\boldb^q) \in A_1 \cap \osh^{-1}(\boldb) }} \!\!\!\!  \overline J^{a \choose c}_{\boldi} \bfK^{c \choose b}_\boldj \bfH^{i_1}_{\bolda^1} \cdots \bfH^{i_l}_{\bolda^l} \bfH^{j_1}_{\boldb^1} \cdots \bfH^{j_q}_{\boldb^q}\\
			&=(\overline J * \bfH) \cdot (\bfK * \bfH)
		\end{align*}
		where the second last identity follows from \autoref{lem:shsh} (with $m = 2$).
		
		As for 4., it is easy to show, using associativity of composition and of $\Dshuffle$, that both sides coincide with $\times^3 \circ ( \overline J \boxtimes \bfK \boxtimes \bfH) \circ \Dshuffle^3$ where $\times^3$ denotes composition of three linear maps.
		
		Finally, 5.\ is shown as follows:
		\begin{align*}
			F^*G^*\bfH &= (G^* \bfH * \overline F(X)) \cdot \overline{DF}(X) \\
			&= (((\bfH * \overline G(F(X))) \cdot \overline{DG}(F(X))) * \overline F(X)) \cdot \overline{DF}(X) \\
			&= (\bfH * \overline G(F(X))* \overline F(X)) \cdot (\overline{DG}(F(X)) * \overline F(X)) \cdot \overline{DF}(X) \\
			&= ((\bfH * \overline{G \circ F}(X)) \cdot \overline{D(G \circ F)}(X) \\
			&= (G \circ F)^*\overline H.
		\end{align*}
		Here, we have used the previous points 2., 3.\ and 4.\ in the proposition, as well as the fact that
		\[
		\overline G(F(X)) * \overline F(X) = G_*\overline F(X) = G_* F_* \overline X = (G \circ F)_*\overline X = \overline{G \circ F}(X) 
		\]
		and similarly that 
		\begin{align*}
			(\overline{DG}(F(X)) * \overline F(X)) \cdot \overline{DF}(X) &= \overline{DG \circ F}(X) \cdot \overline{DF}(X)\\
			&= \overline{DG \circ F(\cdot) DF(\cdot)}(X)\\
			&= \overline{D(G \circ F)}(X)
		\end{align*}
		where we have used \autoref{prop:leibniz}.
	\end{proof}
\end{prop}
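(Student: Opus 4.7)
The plan is to treat the five assertions in the order stated, since parts 2 through 5 reduce to algebraic manipulations once the combinatorial identity underlying part 1 is in place. Throughout, I work at the level of the almost rough paths $\upharpoonleft_{\bfX}$, since \cite[Theorem 3.3.1]{Lyo98} promotes any verified coordinate identity to the honest rough paths $\uparrow_{\bfX}$.

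For part 1, expanding $\upharpoonleft_{\bfX}(\bfK * \bfH)^{(c_1,\ldots,c_m)}_{st}$ produces an outer ordered shuffle decomposition $\boldc \in \osh(\boldc^1,\ldots,\boldc^m)$; substituting the coordinate formula for $(\bfK * \bfH)^{c_i}_{\boldc^i}$ from \autoref{def:change} introduces, for each $i$, an inner ordered shuffle decomposition $(\boldc^{i1},\ldots,\boldc^{in_i}) \in \osh^{-1}(\boldc^i)$. The combinatorial content of \autoref{lem:shsh} (with $n_l = 1$ throughout) is precisely that this nested pair of ordered shuffles is equivalent to a single ordered shuffle of the atomised tuples $\boldc^{ij}$; collecting the $\bfK$ and $\bfH$ coordinates then matches the expansion of $\upharpoonleft_{\boldsymbol H}\bfK$ term by term. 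The pushforward commutation $F_*(\uparrow_{\bfX}\bfH) = \uparrow_{\bfX}(F_*\bfH)$ is the special case $\bfK = \overline F(H)$, and $F_*(G_*\bfX) = (F \circ G)_* \bfX$ follows from this together with the composition rule proved in part 2.

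Parts 2, 3 and 4 are, respectively, the associativity of $*$, the distributivity of $*$ over $\cdot$, and the associativity of $\cdot$. Part 4 is the cleanest: both sides unfold via \eqref{eq:leibnizCoords} into $\times^{3} \circ (\overline J \boxtimes \bfK \boxtimes \overline H) \circ \Dshuffle^3$, which is well-defined by coassociativity of $\Dshuffle$. Part 3 reduces to expanding both sides: the left produces a shuffle $(\bolda,\boldb) \in \sh^{-1}(\boldk)$ nested inside an ordered shuffle of $\boldc$ with multi-index $\boldk$, while the right produces a shuffle $(\bolda,\boldb) \in \sh^{-1}(\boldc)$ with each factor independently ordered-shuffled, and the two descriptions coincide by \autoref{lem:shsh} in the case $m = 2$. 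Part 2 is handled analogously to part 1, with the two layers of ordered shuffle decompositions in $(\overline J * \bfK) * \bfH$ and $\overline J * (\bfK * \bfH)$ reconciled again by \autoref{lem:shsh}; the composition rule $F_*G_*\bfH = (F \circ G)_*\bfH$ then follows from associativity applied to $F_*G_*\bfH = \overline F(G(H)) * (\overline G(H) * \bfH)$, combined with the Fa\`a di Bruno formula \eqref{eq:faa} to identify $\overline F(G(H)) * \overline G(H) = \overline{F \circ G}(H)$.

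Part 5 is then a purely formal consequence. Starting from $F^*G^*\bfH = ((G^*\bfH) * \overline F(X)) \cdot \overline{DF}(X)$ and expanding the inner $G^*\bfH$, part 3 distributes the outer $*\,\overline F(X)$ across the $\cdot$, part 2 re-associates the resulting three-fold $*$-product into $\overline{G \circ F}(X)$, part 4 rebrackets the $\cdot$-product, and the Leibniz rule \autoref{prop:leibniz} combined with the chain-rule identity $D(G \circ F) = DG(F(\cdot)) \cdot DF$ collapses everything into $(G \circ F)^*\bfH$. The main obstacle throughout is the coordinate bookkeeping in part 1: faithfully collapsing a nested ordered-shuffle-of-ordered-shuffles into a single ordered shuffle of atomised indices is where \autoref{lem:shsh} does its essential work, and in hindsight this is precisely the motivation for introducing that lemma in \autoref{sec:alg}.
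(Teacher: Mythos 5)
Your overall architecture coincides with the paper's: part 1 by coordinate expansion and collapsing the nested ordered shuffles, part 3 via \autoref{lem:shsh} with $m=2$, part 4 via coassociativity of $\Dshuffle$, and parts 2 and 5 by formal manipulation of the earlier identities. The treatment of parts 3, 4 and 5 matches the paper's proof essentially verbatim.

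There is, however, a concrete misstep in your justification of the key step of part 1. You claim that ``\autoref{lem:shsh} with $n_l = 1$ throughout'' is what collapses the nested pair of ordered shuffles. With $n_1 = \ldots = n_m = 1$ that lemma degenerates to \autoref{thm:osym}, i.e.\ to the statement $\sh(\boldk^1,\ldots,\boldk^m) = \bigsqcup_{\pi \in \mathfrak S_m}\osh(\boldk^{\pi(1)},\ldots,\boldk^{\pi(m)})$: each inner $\osh$ contains a single tuple, so there is nothing nested to collapse, and the identity says nothing about an ordered shuffle of ordered shuffles. What part 1 actually requires is the opposite regime -- general $n_l$ -- and moreover a variant of \autoref{lem:shsh} in which the \emph{outer} shuffle is also ordered: one needs $\osh(\osh(\boldc^{11},\ldots,\boldc^{1n_1}),\ldots,\osh(\boldc^{m1},\ldots,\boldc^{mn_m}))$ to decompose as a disjoint union, over the interleavings $\boldh \in \osh(\boldk^1,\ldots,\boldk^m)$, of single ordered shuffles of the correspondingly reordered atoms; a single ordered shuffle of the atoms in their original order, as your phrasing suggests, would miss most terms. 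This ordered-outer variant does follow from the bijection $\tau = \sigma\circ(\rho_1,\ldots,\rho_m) \leftrightarrow \pi$ constructed in the proof of \autoref{lem:shsh} (restricting to ordered $\sigma$ restricts to ordered $\pi$), but it is not the statement you invoke, and the case you invoke would not close the argument. The same caveat applies to your sketch of part 2, where you again appeal to \autoref{lem:shsh} for a nested ordered-shuffle identity that is really the coassociativity of the iterated reduced ordered shuffle coproduct (equivalently the dual statement underlying \autoref{cor:dualReduced}). Everything else is sound.
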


In the next theorem we prove the property, well-known in both ordinary and stochastic calculus, which allows to \say{substitute the differential}. This will be especially convenient when manipulating RDEs. We have only introduced the theory necessary to handle weakly geometric rough paths, and the theorem is therefore stated in this context, but one can expect this type of result to also hold true in other settings, such as It\^o calculus and branched rough paths.
\begin{thm}[Associativity of the rough integral]\label{thm:assoc}
	Let $\bfX \in \mathscr C^p_\omega([0,T],V)$, $\bfH \in \mathscr D_{\bfX}(\mathcal L(V,W))$, $I \coloneqq \int \bfH \emph{d} \bfX$, $\overline I$ and $\boldsymbol I$ respectively the canonical controlled and rough paths above $I$, $\bfK \in \mathscr D_{\boldsymbol I}(\mathcal L(W,U))$. Then
	\begin{equation}
		\bigg(\overline \int \bfK \emph{d} \boldsymbol I \bigg) * \overline I = \overline \int (\bfK * \overline I) \cdot \bfH \emph{d} \bfX
	\end{equation}
	and therefore 
	\begin{equation}
		\boldsymbol \int \bfK \emph{d} \boldsymbol I = \boldsymbol \int (\bfK * \overline I) \cdot \bfH \emph{d} \bfX.
	\end{equation}
	\begin{proof}
		In this proof we will denote, for a tuple $\boldc$, $\gamma^\cdot$ its last entry and $\boldc^-$ the tuple obtained by removing $\gamma^\cdot$, so $\boldc = (\boldc^-,\gamma^\cdot)$. Moreover, we will interchangeably use the two indexing notations for controlled integrands, e.g.\ $\bfH^k_\boldc = \bfH^{k \choose \gamma^\cdot}_{\boldc^-}$. For $|\boldc| \geq 1$ we then have
		\begin{align*}
			\bigg( \overline \int (\bfK * \overline I) \cdot \bfH \dif \bfX \bigg)^c_\boldc &= ((\bfK * \overline I) \cdot \bfH )^c_\boldc \\ &= \sum_{(\bolda, \boldb) \in  \sh^{-1}(\boldc^-)} (\bfK * \overline I)^{c \choose h}_\bolda \bfH^{h \choose \gamma^\cdot}_\boldb \\
			&=\!\! \sum_{\substack{(\bolda, \boldb) \in \sh^{-1}(\boldc^-) \\ (\bolda^1,\ldots,\bolda^n) \in A_1 \cap \osh^{-1}(\bolda)}}\!\! \bfK^{c \choose h}_\boldk \bfH^{k_1}_{\bolda^1} \cdots \bfH^{k_n}_{\bolda^n} \bfH^{h \choose \gamma^\cdot}_\boldb \\
			&=\!\! \sum_{\substack{(\bolda, \boldb) \in \sh^{-1}(\boldc^-) \\ (\bolda^1,\ldots,\bolda^n) \in A_1 \cap \osh^{-1}(\bolda)}}\!\! \bfK^c_{(\boldk,h)} \bfH^{k_1}_{\bolda^1} \cdots \bfH^{k_n}_{\bolda^n} \bfH^h_{(\boldb,\gamma^\cdot)} \\ 
			&= \!\! \sum_{\substack{m = 1,\ldots,|\boldc| \\ (\boldc^1,\ldots,\boldc^m) \in A_1 \cap \osh^{-1}(\boldc) }} \!\! \bfK^c_{\boldk} \bfH^{k_1}_{\boldc^1} \cdots  \bfH^{k_m}_{\boldc^m} \\
			&= \!\! \sum_{\substack{m = 1,\ldots,|\boldc| \\ (\boldc^1,\ldots,\boldc^m) \in A_1 \cap \osh^{-1}(\boldc) }} \!\! \bigg(\overline \int \bfK \dif \boldsymbol I \bigg)^c_\boldk \overline I^{k_1}_{\boldc^1} \cdots \overline I^{k_m}_{\boldc^m} \\
			&= \bigg(\bigg(\overline \int \bfK \dif \boldsymbol I \bigg) * \overline I \bigg)_\boldc^c.
		\end{align*}
		At the trace level, we have, through a similar argument
		\begin{align*}
			\bigg( \overline \int (\bfK * \overline I) \cdot \bfH \dif \bfX \bigg)^c_{\boldc;st} &\approx \sum_{|\boldc| = 1}^\p ((\bfK * \overline I) \cdot \bfH )^c_{\boldc;s} \bfX^\boldc_{st} \\
			&= \!\! \sum_{\substack{|\boldc| = 1,\ldots \p \\ m = 1,\ldots,|\boldc| \\ (\boldc^1,\ldots,\boldc^m) \in A_1 \cap \osh^{-1}(\boldc) }} \!\! \bfK^c_{\boldk;s} \bfH^{k_1}_{\boldc^1;s} \cdots  \bfH^{k_m}_{\boldc^m;s} \bfX^\boldc_{st} \\
			&\approx \int \bfK^c \dif \boldsymbol I.
		\end{align*}
		As for the statement at the level of rough paths, we have
		\begin{align*}
			\boldsymbol\int \bfK \dif \boldsymbol I &= \Big\uparrow_{\!\boldsymbol I} \overline \int \bfK \dif \boldsymbol I \\
			&= \Big\uparrow_{\!\bfX} \bigg[ \bigg(\overline \int \bfK \dif \boldsymbol I \bigg) * \overline I \bigg] \\
			&= \Big\uparrow_{\!\bfX} \bigg[ \overline \int (\bfK * \overline I) \cdot \bfH \dif \bfX \bigg] \\
			&= \boldsymbol \int (\bfK * \overline I) \cdot \bfH \dif \bfX
		\end{align*}
		where we have used 1.\ in \autoref{prop:properties} and the previous statement.
	\end{proof}
\end{thm}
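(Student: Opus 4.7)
The plan is to prove the first identity in $\mathscr D_{\bfX}(U)$ by matching Gubinelli derivatives index-by-index, then extract the trace equality via the uniqueness clause of Lyons' extension theorem, and finally deduce the rough path statement using part 1 of Proposition \ref{prop:properties}. Throughout, the key structural fact used is the one spelled out in Definition \ref{def:rint}: the Gubinelli derivative at level $n$ of a rough integral equals the controlled integrand at level $n-1$, so that in the notation $\boldc = (\boldc^-, \gamma^\cdot)$ one has $(\overline{\int} \bfK \dif \boldsymbol I)^c_\boldk = \bfK^c_\boldk$ and $\overline I^h_{(\boldb, \gamma^\cdot)} = \bfH^{h \choose \gamma^\cdot}_\boldb$.

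For the Gubinelli derivatives, fix $\boldc$ with $1 \leq |\boldc| \leq \p - 1$. Expanding the RHS at index $\boldc$ via Definition \ref{def:rint}, then the Leibniz rule \eqref{eq:leibnizCoords}, and finally the $*$-formula for $\bfK * \overline I$, produces a sum
\[
\sum_{(\bolda, \boldb) \in \sh^{-1}(\boldc^-)} \sum_{\substack{n \geq 0 \\ (\bolda^1,\ldots,\bolda^n) \in A_1 \cap \osh^{-1}(\bolda)}} \bfK^c_{(\boldk, h)}\, \overline I^{k_1}_{\bolda^1} \cdots \overline I^{k_n}_{\bolda^n}\, \overline I^h_{(\boldb, \gamma^\cdot)}
\]
(with $n = 0$ forcing $\bolda = ()$, in which case the $*$-factor reduces to the trace $K^c_h$). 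Expanding the LHS via the $*$-formula applied to $\overline{\int} \bfK \dif \boldsymbol I$ produces $\sum_{(\boldc^1, \ldots, \boldc^m) \in A_1 \cap \osh^{-1}(\boldc)} \bfK^c_\boldk \overline I^{k_1}_{\boldc^1} \cdots \overline I^{k_m}_{\boldc^m}$. The two sums agree via the bijection that sends $((\bolda, \boldb), (\bolda^1, \ldots, \bolda^n))$ to $(\boldc^1, \ldots, \boldc^{n+1})$ with $\boldc^l = \bolda^l$ for $l \leq n$, $\boldc^{n+1} = (\boldb, \gamma^\cdot)$, $k_{n+1} = h$. The underlying combinatorial observation is that in any ordered unshuffle $(\boldc^1, \ldots, \boldc^m)$ of $\boldc$ with non-empty blocks, the terminal element of $\boldc^m$ is forced to be $\gamma^\cdot$ (by \eqref{eq:oShufflePerm}, the last terminal element coincides with the last entry of $\boldc$), so stripping $\gamma^\cdot$ off the last block and identifying $\boldb$ with the remainder recovers the left-hand indexing uniquely.

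For the trace, the path $t \mapsto \int_0^t \bfK \dif \boldsymbol I$ is characterised by the almost-additive expansion $\sum_{1 \leq |\boldk| \leq \p} \bfK^c_{\boldk; s} \boldsymbol I^\boldk_{st}$. Substituting $\boldsymbol I^\boldk_{st} \approx (\xhapprox)^\boldk_{st}$ with $\bfH$ replaced by $\overline I$ (from \eqref{eq:partialArrow} and Theorem \ref{thm:almost}), interchanging sums, and reapplying the same bijection as in the previous paragraph (with $\bfX^\boldc_{st}$ playing the role occupied by the indexing pattern on $\boldc$), one obtains $\sum_{1 \leq |\boldc| \leq \p} ((\bfK * \overline I) \cdot \bfH)^{c \choose \gamma^\cdot}_{\boldc^-; s} \bfX^\boldc_{st}$, which is precisely the almost-additive functional defining $\int (\bfK * \overline I) \cdot \bfH \dif \bfX$. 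Uniqueness in \cite[Theorem 3.3.1]{Lyo98} then yields equality of the traces, completing the identity in $\mathscr D_{\bfX}(U)$.

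For the second identity, apply $\uparrow_{\!\bfX}$ to both sides of the first, invoke Proposition \ref{prop:properties}.1 (which gives $\uparrow_{\!\bfX}(\bfK' * \overline I) = \uparrow_{\!\boldsymbol I} \bfK'$ for any $\boldsymbol I$-controlled $\bfK'$, applied here to $\bfK' = \overline{\int}\bfK \dif \boldsymbol I$), and unwind the definition of $\boldsymbol \int$ to obtain $\boldsymbol \int \bfK \dif \boldsymbol I = \boldsymbol \int (\bfK * \overline I) \cdot \bfH \dif \bfX$. The main obstacle in the whole argument is the combinatorial bijection in the first step; once that is in hand the remaining steps are bookkeeping, with the only subtlety being the correct bookkeeping of the $n = 0$ / $m = 1$ boundary case where the $*$-operation contributes its trace factor $K$.
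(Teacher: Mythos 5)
Your proposal is correct and follows essentially the same route as the paper: it matches Gubinelli derivatives via the observation that in any ordered unshuffle of $\boldc$ into non-empty blocks the last block must terminate in $\gamma^\cdot$, handles the trace by comparing the two almost-additive functionals and invoking uniqueness in \cite[Theorem 3.3.1]{Lyo98}, and deduces the rough-path identity from part 1 of \autoref{prop:properties}. The bookkeeping of the $n=0$ boundary case you flag is likewise consistent with the paper's treatment.
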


The next result, for which geometricity is essential, tells us that $F_*$ and $F^*$ behave as adjoint operators under the rough integral pairing. Its proof is an immediate consequence of  \autoref{prop:changeVarSimple} and \autoref{thm:assoc}.
\begin{thm}[Pushforward-pullback adjunction]\label{thm:pushPull}
	Let $\bfX, \bfH, F$ be as above, then
	\begin{equation}
		\bigg(\overline\int \bfH \emph{d} F_*\bfX\bigg) * \overline{F}(X) = \overline \int F^*\bfH \emph{d} \bfX
	\end{equation}
	and therefore
	\begin{equation}
		\boldsymbol{\int} \bfH \emph{d} F_*\bfX = \boldsymbol \int F^*\bfH \emph{d} \bfX.
	\end{equation}
\end{thm}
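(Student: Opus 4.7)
The strategy is to recognise this as an immediate corollary of \autoref{thm:assoc} (associativity of the rough integral), once \autoref{prop:changeVarSimple} (change of variable) has been used to identify $F_*\bfX$ and $\overline F(X)$ as a rough integral and its canonical controlled path, both with respect to $\bfX$. By \autoref{prop:changeVarSimple}, up to the trace-level constant $F(X_0)$, we have $\overline F(X) = \overline{\int} \overline{DF}(X)\,\mathrm{d}\bfX$ in $\mathscr D_\bfX(W)$ and $F_*\bfX = \boldsymbol{\int}\overline{DF}(X)\,\mathrm{d}\bfX$ in $\mathscr C^p_\omega([0,T],W)$. That additive constant is inert: the rough integral $\overline{\int}\bfH\,\mathrm{d}F_*\bfX$ depends only on increments of the trace and on the Gubinelli derivatives of $F_*\bfX$, none of which see $F(X_0)$.

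With this identification, I would apply \autoref{thm:assoc} in the following configuration: take its "$\bfH$" to be $\overline{DF}(X)\in\mathscr D_\bfX(\mathcal L(V,W))$, so that the associated "$\overline I$" and "$\boldsymbol I$" are precisely $\overline F(X)$ and $F_*\bfX$, and take its "$\bfK$" to be the present $\bfH\in\mathscr D_{F_*\bfX}(\mathcal L(W,U))$. \autoref{thm:assoc} then reads
\[
\left(\overline{\int}\bfH\,\mathrm{d}F_*\bfX\right)*\overline F(X) \;=\; \overline{\int}\bigl((\bfH*\overline F(X))\cdot \overline{DF}(X)\bigr)\,\mathrm{d}\bfX,
\]
and by the very definition $F^*\bfH = (\bfH*\overline F(X))\cdot\overline{DF}(X)$ the right-hand side is $\overline{\int}F^*\bfH\,\mathrm{d}\bfX$, establishing the first identity.

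For the identity at the level of rough paths I would apply $\uparrow_\bfX$ to both sides. Point 1 of \autoref{prop:properties} gives $\uparrow_\bfX\!(\bfK*\bfK')=\uparrow_{\uparrow_\bfX\!\bfK'}\!\bfK$; applied with $\bfK'=\overline F(X)$ and $\bfK = \overline\int\bfH\,\mathrm{d}F_*\bfX$ this yields
\[
\uparrow_\bfX\!\Bigl[\bigl(\textstyle\overline\int\bfH\,\mathrm{d}F_*\bfX\bigr)*\overline F(X)\Bigr] \;=\; \uparrow_{F_*\bfX}\!\Bigl(\textstyle\overline\int\bfH\,\mathrm{d}F_*\bfX\Bigr) \;=\; \boldsymbol\int\bfH\,\mathrm{d}F_*\bfX,
\]
while by \autoref{expl:roughIntLift} the other side becomes $\boldsymbol\int F^*\bfH\,\mathrm{d}\bfX$. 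There is no serious obstacle, combinatorial or analytic: the combinatorial work was already carried out in \autoref{thm:almost} and \autoref{prop:K*H}, and is packaged in \autoref{thm:assoc} and the definition of $F^*$. The only point to handle with a sentence of care is the inert constant $F(X_0)$ arising from \autoref{prop:changeVarSimple}.
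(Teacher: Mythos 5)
Your proposal is correct and follows exactly the route the paper takes: the paper's proof is precisely the observation that the claim is an immediate consequence of \autoref{prop:changeVarSimple} (identifying $\overline F(X)$ and $F_*\bfX$, up to the inert constant $F(X_0)$, as the canonical controlled and rough paths above $\int \overline{DF}(X)\,\mathrm{d}\bfX$) combined with \autoref{thm:assoc} and the definition of $F^*\bfH$. Your extra remarks on why the constant is harmless and on lifting via point 1 of \autoref{prop:properties} are accurate and consistent with the paper's conventions.
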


Next we move on to the topic of rough differential equations (RDEs). We will introduce two equivalent notions of solution to an RDE. Given a field of linear maps $F \in C^\infty(W,\mathcal L(V,W))$ and a smooth map $g \in C^\infty(W,U)$ we define, for $y \in W$
\begin{equation}
	Fg(y) \coloneqq Dg(y) \circ F(y) \in \mathcal L(V,U)
\end{equation}
and inductively
\begin{equation}
	F^ng(y) \coloneqq F(\eta \mapsto F^{n-1} g(\eta))|_{\eta = y} \in \mathcal L(V,\mathcal L(V^{\otimes n-1},U)) = \mathcal  L(V^{\otimes n},U).
\end{equation}
In coordinates we denote 
\[
F_\gamma g^c(y) \coloneqq Fg(y)^c_\gamma = \partial_k g(y) F^k_\gamma(y) \ \Rightarrow \ F^n g(y)_{(\gamma_1,\ldots,\gamma_n)}^c = F_{\gamma_1} \cdots F_{\gamma_n}g^c(y).
\]
We will also use the compact notation $F_\boldc g^c(y)$ for the latter.
\begin{rem}\label{rem:rightToLeft}
	Note that $F_{\gamma_1} \cdots F_{\gamma_n}g^c(y)$ can be read right to left as well as left to right, i.e.\ it is equal to $F_{\gamma_1} \cdots F_{\gamma_{n-1}}(F_{\gamma_n}g^c)(y)$ for $n \geq 2$. This can be seen by induction on $n$ (with the quantifier $\forall g$ inside the inductive hypothesis). For $n = 2$ the statement is tautological. For the inductive step we have
	\begin{align*}
		F_{\gamma_1} \cdots F_{\gamma_{n+1}}g^c(y) &= F_{\gamma_1}(F_{\gamma_2} \cdots F_{\gamma_{n+1}}g^c)(y) \\
		&= F_{\gamma_1}(F_{\gamma_2} \cdots F_{\gamma_n}(F_{\gamma_{n+1}}g^c))(y) \\
		&=F_{\gamma_1} \cdots F_{\gamma_n}(F_{\gamma_{n+1}}g^c)(y)
	\end{align*}
	where in the second identity we have used the inductive hypothesis.
\end{rem}
\begin{defn}[Davie solution to an RDE]\label{def:davie}
	Let $F \in C^\infty(W,\mathcal L(V,W))$. A \emph{solution} to the RDE 
	\begin{equation}\label{eq:RDE}
		\dif Y = F(Y) \dif \bfX, \quad Y_0 = y_0
	\end{equation}
	is a path $Y \in C([0,T],W)$ starting at $y_0$ with the property that for all $g \in C^\infty (W)$.
	\begin{equation}\label{eq:davieSol}
		g(Y)_{st} \approx \sum_{n = 1}^\p \langle F^ng(Y_s), \bfX^n_{st} \rangle.
	\end{equation}
\end{defn}
\begin{prop}[Gubinelli solution to an RDE]\label{prop:gubDavie}
	Let $Y$ be a solution to \eqref{eq:RDE}. Then
	\begin{equation}\label{eq:gubY}
		\overline Y \coloneqq (Y,F(Y) = F\mathbbm 1 (Y), \ldots, F^{\p-1}\mathbbm 1 (Y)) \in \mathscr D_{\bfX}(W)
	\end{equation}
	and moreover
	\begin{equation}\label{eq:gubSol}
		\overline Y = y_0 + \overline\int F_*\overline Y \edif \bfX.
	\end{equation}
	Conversely, if an $\bfX$-controlled controlled path satisfying the above identity, its trace satisfies \autoref{def:davie}.
\end{prop}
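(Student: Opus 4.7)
The pivotal ingredient, and the main obstacle, is the combinatorial identity
\[
F_\bolda g(y) = \sum_{\substack{m \geq 1 \\ (\bolda^1, \ldots, \bolda^m) \in \osh^{-1}(\bolda) \\ |\bolda^i| \geq 1}} \partial_\boldh g(y) \, F^{h_1}_{\bolda^1}(y) \cdots F^{h_m}_{\bolda^m}(y)
\]
valid for any smooth $g$ and any non-empty tuple $\bolda$, where $F^{h}_\boldc(y) \coloneqq F_\boldc \mathbbm 1^h(y)$. I would prove this by induction on $|\bolda|$: the base case is the definition of $Fg$; for the inductive step, write $\bolda = (\gamma_1,\bolda')$, apply $F_{\gamma_1}$ to the inductive formula for $F_{\bolda'} g$ using \autoref{rem:rightToLeft}, and distribute $\partial_\ell$ across the resulting product by Leibniz. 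The summands produced are in bijection with $\osh^{-1}(\bolda)$ according to whether $\gamma_1$ forms its own singleton block $\bolda^1 = (\gamma_1)$---which the ordering constraint forces to be the first block---or is prepended to some longer block $\bolda^i = (\gamma_1, \bolda'^i)$, paralleling the case analysis behind \autoref{lem:orderedCompat}. Specialising $g = \mathbbm 1^k$ evaluated at $(\boldc,\beta)$ then yields $(F_*\overline Y)^{k \choose \beta}_{\boldc} = F^{|\boldc|+1}\mathbbm 1^k(Y)_{(\boldc,\beta)}$, i.e.\ that $(F_*\overline Y)_{n-1}$ and $F^n\mathbbm 1(Y)$ coincide as elements of $\mathcal L(V^{\otimes n},W)$.

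For the forward direction, assume $Y$ satisfies \autoref{def:davie}. Testing Davie against the smooth function $g = F_\boldb \mathbbm 1^k$ and retaining only terms of order $\leq \p - |\boldb|$ yields the controlled-path condition \eqref{eq:contrDefCoords} for the candidate derivatives $\overline Y^k_\boldb \coloneqq F^k_\boldb(Y)$, so $\overline Y \in \mathscr D_\bfX(W)$ (continuity and $p$-variation being inherited from $Y$ via smoothness of $F^n\mathbbm 1$). At each level $n \geq 1$, the identity \eqref{eq:gubSol} reduces, via \autoref{def:rint}, to $\overline Y_n = (F_*\overline Y)_{n-1}$, which is precisely the combinatorial identity above. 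At the trace level, Davie applied to coordinate functions $g = \mathbbm 1^k$ gives $Y_{st} \approx \sum_{n=1}^\p\langle F^n\mathbbm 1(Y_s),\bfX^n_{st}\rangle$; since $\langle (F_*\overline Y)_s, \bfX^{\geq 1}_{st}\rangle$ expands to exactly the same sum by the combinatorial identity, uniqueness of the rough integral in \autoref{def:rint} yields $Y_t = y_0 + \int_0^t F_*\overline Y \, \dif \bfX$.

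Conversely, suppose $\overline Y \in \mathscr D_\bfX(W)$ satisfies $\overline Y = y_0 + \overline\int F_*\overline Y \, \dif\bfX$. Since $(F_*\overline Y)_{n-1}$ depends only on $\overline Y_0,\ldots,\overline Y_{n-1}$ through the formula for $\bfK * \bfH$, induction on $n$ using the level-$n$ equation $\overline Y_n = (F_*\overline Y)_{n-1}$ together with the combinatorial identity forces $\overline Y_n = F^n\mathbbm 1(Y)$; the trace-level equation then gives $Y^k_{st} \approx \sum_{|\bolda|=1}^{\p} F^k_\bolda(Y_s)\bfX^\bolda_{st}$. To upgrade this to \autoref{def:davie} for arbitrary smooth $g$, I would Taylor-expand $g(Y_t)$ around $Y_s$ to order $\p$, substitute the above expansion for each $Y^{k_i}_{st}$, invoke the integration-by-parts axiom to rewrite $\bfX^{\bolda^1}_{st}\cdots\bfX^{\bolda^m}_{st} = \sum_{\boldc \in \sh(\bolda^1,\ldots,\bolda^m)}\bfX^\boldc_{st}$, and then convert the unordered shuffle into an ordered one via \autoref{thm:osym}; the $1/m!$ from Taylor is absorbed by the symmetry of $\partial^m g$ and of the product of the $F^{k_i}_{\bolda^i}$, so that summing over permutations yields $m!$ identical contributions. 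The inner sum over $\osh^{-1}(\boldc)$ then collapses to $F_\boldc g(Y_s)$ by the combinatorial identity, producing $g(Y)_{st} \approx \sum_{n=1}^\p\langle F^n g(Y_s), \bfX^n_{st}\rangle$ as required.
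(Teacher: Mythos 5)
Your proposal is correct. The key combinatorial identity you isolate is exactly the paper's identity \eqref{eq:FFF}, proved inside \autoref{lem:FY} by the same induction on the length of the tuple via \autoref{rem:rightToLeft} and the same case analysis (the new index either forms a singleton block, forced to be first by the ordering constraint, or is prepended to an existing block); your forward direction (testing Davie against $g=F_{\boldb}\mathbbm 1^k$, then matching levels and invoking uniqueness from \cite[Theorem 3.3.1]{Lyo98} at the trace) is also the paper's argument. Where you genuinely diverge is the converse: to show that a Gubinelli solution satisfies \eqref{eq:davieSol} for arbitrary $g$, the paper chains together the machinery it has already built --- \autoref{prop:changeVarSimple}, the associativity \autoref{thm:assoc}, the Leibniz rule \autoref{prop:leibniz} and \autoref{prop:properties} --- to rewrite $g(Y)_{st}$ as $\int_s^t (Fg)_*\overline Y\,\dif\bfX$ and then conclude by \autoref{lem:FY}. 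You instead give a direct computation: Taylor-expand $g(Y_t)$, substitute the level-one expansion of $Y_{st}$, apply the integration-by-parts axiom, symmetrise the unordered shuffle into ordered shuffles via \autoref{thm:osym} (absorbing the $1/m!$), and collapse with \eqref{eq:FFF}. This is a valid alternative; the error bookkeeping works out since the Taylor remainder is $O(\omega^{(\p+1)/p})$ and cross terms of total degree $>\p$ are negligible. Your route is more elementary and self-contained (it essentially re-derives the Davie condition by hand, in the spirit of the proof of \autoref{thm:almost}), whereas the paper's route is shorter given the already-established calculus and makes transparent that the statement is an instance of the associativity and pushforward identities rather than a fresh combinatorial fact.
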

In order to prove this proposition we will make use of the following
\begin{lem}\label{lem:FY}
	The $\bfX$-controlled path $\overline Y$ has the form \eqref{eq:gubY} if and only if $\overline Y_n = (F_*\overline Y)_{n-1}$ for $n \geq 1$. Moreover, in this case
	\[
	((Fg)_*\overline Y)_{n-1} = F^n g(Y),\quad n \geq 1
	\]
	with $g$ as above.
	\begin{proof}
		If we prove 
		\[
		(Fg)_*(Y,F(Y), \ldots, F^{\p-1}\mathbbm 1 (Y))_{n-1} = F^ng(Y)
		\]
		we will have shown both the second statement and the \say{only if} part of the first (just choose $g = \mathbbm 1$). In order to show this, we first establish the identity
		\begin{equation}\label{eq:FFF}
			F_\boldc g^c(y) = \sum_{\substack{(\boldc^1,\ldots,\boldc^m) \in \osh^{-1}(\boldc) \\ |\boldc^1|, \ldots, |\boldc^m| \geq 1}} \partial_\boldk g^c F_{\boldc^1} \mathbbm 1^{k_1} \cdots F_{\boldc^m} \mathbbm 1^{k_m}(y).
		\end{equation}
		Note that this is not a closed form formula for $F^n g$, since iterated compositions of the vector fields $F_\boldc$ also appear on the right hand side, but it will be useful for us nonetheless.  We proceed by induction on $|\boldc|$. For $|\boldc| = 1$ there is nothing to show. For the inductive step, using \autoref{rem:rightToLeft} we have
		\begin{align*}
			&\mathrel{\phantom{=}}F_{(\gamma_1,\ldots,\gamma_{n+1})} g^c (y) \\
			&= F_{\gamma_1}(F_{\gamma_2,\ldots,\gamma_{n+1}} g^c)(y) \\
			&= F_{\gamma_1} \!\!\!\!  \!\!\!\!  \!\!\!\! \sum_{\substack{(\boldc^1,\ldots,\boldc^m) \in A_1 \cap \osh^{-1}(\gamma_2,\ldots,\gamma_{n+1})}}  \!\!\!\!  \!\!\!\! \!\!\!\! \partial_\boldk g^c F_{\boldc^1} \mathbbm 1^{k_1} \cdots F_{\boldc^m} \mathbbm 1^{k_m}(y) \\
			&= \!\!\!\!  \!\!\!\!  \!\!\!\! \sum_{\substack{(\boldc^1,\ldots,\boldc^m) \in A_1 \cap \osh^{-1}(\gamma_2,\ldots,\gamma_{n+1}) }}  \!\!\!\!  \!\!\!\! \!\!\!\! \big( \partial_{h,\boldk} g^c F_{\boldc^1} \mathbbm 1^{k_1} \cdots F_{\boldc^m} \mathbbm 1^{k_m}  \\
			&\mathrel{\phantom{= \!\!\!\!  \!\!\!\!  \!\!\!\! \sum_{(\boldc^1,\ldots,\boldc^m) \in A_1 \cap \osh^{-1}(\gamma_2,\ldots,\gamma_{n+1})}  \!\!\!\!  \!\!\!\! \!\!\!\!}} +\sum_{l = 1}^m \partial_{\boldk} g^c F_{\boldc^1} \mathbbm 1^{k_1} \cdots \partial_h (F_{\boldc^l} \mathbbm 1^{k_l}) \cdots F_{\boldc^m} \mathbbm 1^{k_m}  \big) F^h_{\gamma_1}(y) \\
			&= \!\!\!\!  \!\!\!\!  \!\!\!\! \sum_{\substack{(\boldc^1,\ldots,\boldc^m) \in A_1 \cap \osh^{-1}(\gamma_2,\ldots,\gamma_{n+1})}}  \!\!\!\!  \!\!\!\! \!\!\!\! \big( \partial_{h,\boldk} g^c F^h_{\gamma_1} F_{\boldc^1} \mathbbm 1^{k_1} \cdots F_{\boldc^m} \mathbbm 1^{k_m}  \\
			&\mathrel{\phantom{= \!\!\!\!  \!\!\!\!  \!\!\!\! \sum_{\substack{(\boldc^1,\ldots,\boldc^m) \in A_1 \cap  \osh^{-1}(\gamma_2,\ldots,\gamma_{n+1})}}  \!\!\!\!  \!\!\!\! \!\!\!\!}} +\sum_{l = 1}^m \partial_{\boldk} g^c F_{\boldc^1} \mathbbm 1^{k_1} \cdots 
			F_{\gamma_1,\boldc^l} \mathbbm 1^{k_l} \cdots F_{\boldc^m} \mathbbm 1^{k_m}  \big)(y) \\
			&=  \!\!\!\!  \!\!\!\! \!\!\!\! \sum_{\substack{(\boldc^1,\ldots,\boldc^m) \in A_1 \cap \osh^{-1}(\gamma_1,\ldots,\gamma_{n+1})}}  \!\!\!\!  \!\!\!\! \!\!\!\! \partial_\boldk g^c F_{\boldc^1} \mathbbm 1^{k_1} \cdots F_{\boldc^m} \mathbbm 1^{k_m}(y).
		\end{align*}
		Now, for $n = |\boldc| \geq 1$ we have
		\begin{align*}
			((Fg)_*\overline Y)^c_{\boldc} &= \!\!\!\! \sum_{(\boldc^1,\ldots,\boldc^m) \in \osh^{-1}(\gamma_1,\ldots,\gamma_{n-1})}\!\!\!\! \partial_\boldk(F_{\gamma_n}g^c) F_{\boldc^1} \mathbbm 1^{k_1} \cdots F_{\boldc^m} \mathbbm 1^{k_m}(Y) \\
			&= F_{(\gamma_1,\ldots,\gamma_{n-1})} (F_{\gamma_n}g^c)(y) \\
			&= F_\boldc g^c(y)
		\end{align*}
		where we have used \autoref{rem:rightToLeft} and \eqref{eq:FFF}.
		
		We now show the \say{if} implication of the first statement. Namely, we need to show that if $\overline Y \in \mathscr D_{\bfX}(W)$ has the property that $\overline Y_n = (F_* \overline Y)_{n-1}$ for $n = 1,\ldots, \p$ then $\overline Y_n = F^n\mathbbm 1 (Y)$. We show this by induction on $n$. For $n = 1$ the assertion is obvious. For the inductive step we have
		\begin{align*}
			\overline Y^h_\boldc &=\!\!\!\!  \!\!\!\! \!\!\!\! \sum_{\substack{(\boldc^1,\ldots,\boldc^m) \in A_1 \cap \osh^{-1}(\boldc) }}\!\!\!\!  \!\!\!\! \!\!\!\! \partial_\boldk F^h(Y) 	\overline Y^{k_1}_{\boldc^1} \cdots \overline Y^{k_1}_{\boldc^1} \\
			&=\!\!\!\!  \!\!\!\! \!\!\!\! \sum_{\substack{(\boldc^1,\ldots,\boldc^m) \in A_1 \cap \osh^{-1}(\boldc)}}\!\!\!\!  \!\!\!\! \!\!\!\! \partial_\boldk F^h F_{\boldc^1} \mathbbm 1^{k_1} \cdots F_{\boldc^1}\mathbbm 1^{k_1}(Y) \\
			&= F_\boldc \mathbbm 1^h(Y)
		\end{align*} 
		where we have used \eqref{eq:FFF} and the inductive hypothesis.
	\end{proof}
\end{lem}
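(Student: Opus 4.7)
The strategy is to first establish a closed-form expansion of iterated vector-field derivatives in terms of ordered unshuffles, and then leverage it for all three claims. Specifically, the key auxiliary identity to prove is
\[
F_\boldc g^c(y) \;=\; \sum_{\substack{(\boldc^1,\ldots,\boldc^m)\in\osh^{-1}(\boldc) \\ |\boldc^l|\geq 1}} \partial_\boldk g^c(y)\, F_{\boldc^1}\mathbbm 1^{k_1}(y)\cdots F_{\boldc^m}\mathbbm 1^{k_m}(y)
\]
for every scalar $g\in C^\infty(W)$. This is not a fully explicit formula for $F^n g$ (iterated compositions of the vector fields $F_{\boldc^l}$ still appear on the right), but it suffices for the bookkeeping ahead. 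I would prove it by induction on $|\boldc|$, with the base case $|\boldc|=1$ being the tautology $F_{\gamma_1}g^c = \partial_h g^c\,F^h_{\gamma_1}$. For the inductive step, Remark~\ref{rem:rightToLeft} lets me write $F_{(\gamma_1,\ldots,\gamma_{n+1})}g^c = F_{\gamma_1}\bigl(F_{(\gamma_2,\ldots,\gamma_{n+1})}g^c\bigr)$, apply the inductive hypothesis to the inner factor, and expand the derivation $F_{\gamma_1} = F^h_{\gamma_1}\partial_h$ via the Leibniz rule. Each term produced has $\partial_h$ landing either on the outer derivative $\partial_\boldk g^c$ (spawning a new factor $F_{\gamma_1}\mathbbm 1^h$) or on one of the factors $F_{\boldc^l}\mathbbm 1^{k_l}$ (promoting it to $F_{(\gamma_1,\boldc^l)}\mathbbm 1^{k_l}$, again by Remark~\ref{rem:rightToLeft}).

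The main obstacle is verifying that the Leibniz expansion above produces each ordered unshuffle $(\boldd^1,\ldots,\boldd^M)\in\osh^{-1}(\gamma_1,\ldots,\gamma_{n+1})$ exactly once. The decisive observation is that since position $1$ (carried by $\gamma_1$) is the strict minimum, $\gamma_1$ must appear at the head of whichever piece $\boldd^l$ contains it, and stripping it off leaves an ordered unshuffle of $(\gamma_2,\ldots,\gamma_{n+1})$, possibly with one fewer piece in the case $\boldd^l = (\gamma_1)$, which is then forced to be $\boldd^1$ by the ordering of last entries. The two cases — singleton versus non-singleton — correspond precisely to the $\partial_h\partial_\boldk g^c$ and the $\partial_h(F_{\boldc^l}\mathbbm 1^{k_l})$ contributions in the Leibniz expansion, so the matching is a bijection. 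This is where the ordered shuffle condition is genuinely essential: for an unordered shuffle one would also need to choose where in the sequence of pieces to insert the new singleton, but the ordering constraint pins this choice.

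With the identity in hand, all three assertions follow in short order. For the ``moreover'' part, write $\boldd=(\boldc,\gamma_n)$ of length $n\geq 1$, so that the trailing $\gamma_n$ plays the role of the $V$-input of $\mathcal L(V,U)$; expanding $((Fg)_*\overline Y)^c_\boldd$ via the coordinate formula for $*$ from Definition~\ref{def:change}, substituting $\overline Y^{k_l}_{\boldc^l} = F_{\boldc^l}\mathbbm 1^{k_l}(Y)$ on each piece, and invoking the auxiliary identity with the scalar $F_{\gamma_n}g^c$ in place of $g^c$ gives $F_\boldc(F_{\gamma_n}g^c)(Y) = F_\boldd g^c(Y) = F^n g(Y)^c_\boldd$, the final equality being Remark~\ref{rem:rightToLeft}. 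Specialising to $g=\mathbbm 1$ (so $Fg=F$ and $F^n g=F^n\mathbbm 1$) yields the forward direction of the equivalence. The converse is a short induction on $n$: under the hypothesis $\overline Y_n = (F_*\overline Y)_{n-1}$, the same expansion of the right-hand side, combined with the inductive hypothesis on lower Gubinelli derivatives and another invocation of the auxiliary identity applied to $F_{\gamma_n}\mathbbm 1^h$, delivers $\overline Y^h_\boldd = F_\boldd\mathbbm 1^h(Y) = F^n\mathbbm 1(Y)^h_\boldd$, closing the induction.
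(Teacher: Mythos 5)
Your proposal is correct and follows essentially the same route as the paper: you establish the identity \eqref{eq:FFF} by induction on $|\boldc|$ (using \autoref{rem:rightToLeft} and the Leibniz expansion, with the same bijection between the two Leibniz term types and the placement of $\gamma_1$ either as the new singleton first piece or at the head of an existing piece), then deduce the ``moreover'' part and the forward implication by specialising, and close the converse by a short induction on $n$.
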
	
\begin{proof}[Proof of \autoref{prop:gubDavie}]
	Let $Y$ be a Davie solution to the RDE. Taking $g$ in \eqref{eq:davieSol} to be $\mathbbm 1, F,\ldots, F^{\p - 2}\mathbbm 1$ proves that $\overline Y$ defined in \eqref{eq:gubY} is indeed an element of $\mathscr D_{\bfX}(W)$. By \autoref{lem:FY} we then have $F_*\overline Y = (F(Y),\ldots,F^{\p}\mathbbm 1 (Y))$ and by \autoref{def:rint}
	\[
	\int_s^t F_*\overline Y \dif \bfX \approx \langle (F_*\overline Y)_s, \bfX^{\geq 1}_{st} \rangle \approx Y_{st}
	\]
	again by the Davie definition. Since both the left and right hand sides are increments of paths, we conclude by \cite[Theorem 3.3.1]{Lyo98} that identity must hold. Therefore, since $Y_0 = y_0$, \eqref{eq:gubSol} holds at the trace level, and for $n \geq 1$
	\[
	\bigg( y_0 + \overline\int F_*\overline Y \dif \bfX \bigg)_n = (F_*\overline Y)_{n-1} = \overline Y_n.
	\]
	
	Conversely, assume that there exists some $\overline Y \in \mathscr D_{\bfX}(W)$ s.t.\ \eqref{eq:gubSol} holds: this implies that for $m \geq 0$
	\[
	(F_*\overline Y)_m = \bigg( y_0 + \overline\int F_*\overline Y \dif \bfX \bigg)_{m+1} = \overline Y_{m+1}
	\]
	and therefore by \autoref{lem:FY} $\overline Y$ must have the form \eqref{eq:gubY}. Finally, $Y_0 = y_0$ and for $g \in C^\infty(W)$ and $\bfY \coloneqq \uparrow_{\!\bfX}\!\!\overline Y$
	\begin{align*}
		g(Y)_{st} &= \int_s^t \overline{Dg}(Y) \dif \bfY \\ 
		&= \int_s^t (\overline{Dg}(Y) * \overline Y) \cdot (\overline F(Y) * \overline Y) \dif \bfX \\
		&= \int_s^t (\overline{Dg}(Y) \cdot \overline F(Y)) * \overline Y \dif \bfX \\
		&= \int_s^t \overline{(Dg(\cdot) F(\cdot) )}(Y) * \overline Y \dif \bfX \\
		&= \int_s^t (Fg)_*\overline Y  \dif \bfX \\
		&\approx \sum_{n = 1}^\p \langle F^ng(Y_s), \bfX^n_{st} \rangle
	\end{align*}
	where we have used \autoref{thm:assoc}, \autoref{prop:properties} and \autoref{lem:FY}. This concludes the proof.
\end{proof}
The above proposition tells us that once we have the solution in the sense of \autoref{def:davie} we can obtain an $\bfX$-controlled path, and thus by \autoref{def:lift} a rough path. If we want to emphasise the existence of these superstructures we will write
\begin{equation}
	\dif \overline Y = F(Y) \dif \bfX \quad \text{and} \quad \dif \boldsymbol Y = F(Y) \dif \bfX, \quad Y_0 = y_0
\end{equation}
i.e.\ $\bfY \coloneqq \uparrow_{\!\bfX}\!\!\overline Y$. Notice that the initial condition only involves the trace.

The next result will be instrumental in defining RDEs on manifolds in a coordinate-invariant manner.
\begin{thm}[Change of variable formula for RDE solutions]\label{thm:changeVarRDEs}
	Let $\bfX,F, \bfY$ be as above, $g \in C^\infty (W,U)$. Then $(\boldsymbol Y,g_*\boldsymbol Y)$ jointly solve the RDE
	\begin{equation}
		\edif\begin{pmatrix} \boldsymbol Y \\ \boldsymbol Z
		\end{pmatrix} = \begin{pmatrix} F(Y) \\ Dg(Y)F(Y)
		\end{pmatrix} \edif \bfX.
	\end{equation}
	In particular, if $g$ is invertible, Defining $C^\infty(U,\mathcal L(V,U)) \ni F_g(z) \coloneqq Dg(g^{-1}(Z))F(g^{-1}(Z))$, $g_*\bfY$ coincides with the rough path solution to
	\begin{equation}
		\edif \boldsymbol Z = F_g(Z) \edif \bfX.
	\end{equation}
	\begin{proof}
		Using \autoref{prop:changeVarSimple}, \autoref{thm:assoc} and \autoref{prop:properties} we have
		\begin{align*}
			\dif (g_* \bfY) &= y_0 + \boldsymbol \int \overline{Dg}(Y) \dif \bfY \\
			&= y_0 + \boldsymbol \int Dg_* \overline Y \cdot F_* \overline Y \dif \bfX \\
			&= y_0 + \boldsymbol \int (Dg(\cdot ) F(\cdot))_*\overline Y \dif \bfX.
		\end{align*}
		This proves the first claim; as for the second, we continue 
		\begin{align*}
			\dif (g_* \bfY) &=  \boldsymbol\int (Dg(\cdot ) F(\cdot))_*\overline Y \dif \bfX \\
			&=  \boldsymbol\int (Dg(g^{-1}(\cdot)) F(g^{-1}(\cdot)))_* g_*\overline Y \dif \bfX
		\end{align*}
		where we have again used \autoref{prop:properties}. This concludes the proof.
	\end{proof}
\end{thm}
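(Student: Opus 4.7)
The plan is to reduce everything to a single application of the change of variables formula for exact 1-forms (\autoref{prop:changeVarSimple}), followed by one use of associativity of the rough integral (\autoref{thm:assoc}), tying up loose ends via the functorial and Leibniz-type identities of \autoref{prop:properties}. The key observation is that the RDE $\dif \bfY = F(Y)\dif \bfX$ realises $\bfY$ itself as a rough integral against $\bfX$, namely $\bfY = y_0 + \boldsymbol\int F_*\overline Y \dif \bfX$ with $\bfY = \uparrow_{\bfX}\!\!\overline Y$ (this is exactly the content of \autoref{prop:gubDavie}). So any rough integral against $\bfY$ can be converted into a rough integral against $\bfX$, and we only have to recognise the resulting integrand as the pushforward of $\overline Y$ through the correct vector field.

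Concretely, I would first apply \autoref{prop:changeVarSimple} to the smooth map $g$ and the rough path $\bfY$, giving
\[
g_*\bfY = g(y_0) + \boldsymbol\int \overline{Dg}(Y)\dif \bfY.
\]
Then I would invoke \autoref{thm:assoc} with $\bfH := F_*\overline Y$, $\overline I := \overline Y$ (so that $\boldsymbol I = \bfY$) and $\bfK := \overline{Dg}(Y)$, to rewrite
\[
\boldsymbol\int \overline{Dg}(Y)\dif \bfY = \boldsymbol\int \bigl(\overline{Dg}(Y) * \overline Y\bigr) \cdot \bigl(F_*\overline Y\bigr) \dif \bfX.
\]
By the definition of pushforward of a controlled path (\autoref{expl:FH}) the first factor equals $Dg_*\overline Y$, and the Leibniz-type identity (property 3 of \autoref{prop:properties}) combined with the product rule $(A(\cdot)B(\cdot))_*\bfH = A_*\bfH \cdot B_*\bfH$ turns the integrand into $\bigl(Dg(\cdot)F(\cdot)\bigr)_*\overline Y$. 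This is precisely the Gubinelli form of a solution to the RDE $\dif \boldsymbol Z = Dg(Y)F(Y)\dif \bfX$ (again by \autoref{prop:gubDavie}), so $g_*\bfY$ is that solution. Combined with $\dif \bfY = F(Y)\dif \bfX$ we obtain the claimed joint RDE for $(\bfY, g_*\bfY)$.

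For the second assertion, assuming $g$ invertible I would rewrite
\[
Dg(\cdot)F(\cdot) = F_g \circ g
\]
and use functoriality of pushforward (property 2 of \autoref{prop:properties}) to get $\bigl(Dg(\cdot)F(\cdot)\bigr)_*\overline Y = (F_g)_*(g_*\overline Y)$. Plugging back into the integral representation of $g_*\bfY$ shows that $g_*\bfY$ is the rough path solution to $\dif \boldsymbol Z = F_g(Z)\dif \bfX$ with initial condition $g(y_0)$.

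The main obstacle, I expect, is not analytic but bookkeeping: keeping straight which objects live in $\mathscr D_{\bfX}$ versus $\mathscr D_{\bfY}$ and matching the hypotheses of \autoref{thm:assoc} correctly, since there one must identify the reference controlled path $\overline I$ of $\bfY$ with the $\overline Y$ coming out of the Gubinelli formulation of the RDE. Once this identification is made, all the subsequent manipulations are mechanical applications of results already established and no further analytic estimate is required.
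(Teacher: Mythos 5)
Your proposal is correct and follows essentially the same route as the paper's proof: \autoref{prop:changeVarSimple} applied to $g$ and $\bfY$, then \autoref{thm:assoc} with $\overline I = \overline Y$ (justified by the Gubinelli formulation \autoref{prop:gubDavie}), then the Leibniz and functoriality identities of \autoref{prop:properties} to identify the integrand as $(Dg(\cdot)F(\cdot))_*\overline Y$ and, for the invertible case, as $(F_g)_* g_*\overline Y$. You merely make explicit the identifications that the paper leaves implicit (and you correctly note the initial condition $g(y_0)$).
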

The following theorem is proved in \cite[Corollary 2.17, Theorem 4.2]{CDL15} in the case of $2 \leq p < 3$ and the proof carries over to the general case:
\begin{thm}[Local existence and uniqueness]\label{thm:localE}
	Precisely one of the following two possibility holds w.r.t.\ \eqref{eq:RDE}
	\begin{enumerate}
		\item A solution on $[0,T]$ exists;
		\item There exists an $S \leq T$ and a solution on $[0,S)$, with $Y_{[0,S)}$ not contained in any compact set of $\bbR^e$.
	\end{enumerate}
	Moreover, in either case, the solution is unique on the interval on which it is defined.
\end{thm}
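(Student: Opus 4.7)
The plan is to adapt the standard Picard iteration to the controlled-path formulation furnished by \autoref{prop:gubDavie}, and then to patch local solutions together by a maximality argument. By that proposition, finding a Davie solution to \eqref{eq:RDE} is equivalent to finding $\overline Y \in \mathscr D_{\bfX}(W)$ with $Y_0 = y_0$ and
\[
\overline Y = y_0 + \overline \int F_*\overline Y \dif \bfX,
\]
so it suffices to set up a fixed point problem for the map $\Phi(\overline Y) \coloneqq y_0 + \overline \int F_*\overline Y \dif \bfX$ on an appropriate subset of $\mathscr D_{\bfX}(W)$.

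First, I would establish local existence and uniqueness on a small interval $[0,T_1]$. Fix $R > |y_0|$ and consider the subset $\mathscr B_{R,T_1}$ of $\bfX$-controlled paths $\overline Y$ on $[0,T_1]$ with $Y_0 = y_0$ and a uniform bound on $\sup_t \|\overline Y_t\| + \|\overline Y\|_{p\text{-var},\omega} \leq R$, equipped with the natural controlled-path metric. Since $F$ is smooth, it and all its derivatives appearing in the ordered-shuffle expansion of \autoref{expl:FH} are bounded on the closed ball of radius $R$, and the formula for $(F_*\overline Y)_n = \overline F(Y)*\overline Y)_n$ from \autoref{prop:K*H} is a finite polynomial in the Gubinelli derivatives of $\overline Y$ composed with $\partial_\boldk F(Y)$. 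This makes $\overline Y \mapsto F_*\overline Y$ locally Lipschitz in the controlled-path norm, and composing with the rough integral operator (whose Lipschitz continuity on controlled paths follows from the almost-multiplicativity estimates in \autoref{thm:almost} together with the sewing result \cite[Theorem 3.3.1]{Lyo98}) produces a Lipschitz constant of $\Phi$ of order $\omega(0,T_1)^{1/p}$. For $\omega(0,T_1)$ sufficiently small, $\Phi$ maps $\mathscr B_{R,T_1}$ into itself and is a strict contraction, so Banach's theorem yields a unique local solution; crucially, the existence time depends only on a bound on $|y_0|$ and uniform bounds on $F$ and its derivatives on a neighbourhood of this initial condition.

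Next, I would extend this to a maximal interval. Let $S$ be the supremum of those $t \in (0,T]$ on which a solution exists on $[0,t]$; by the previous step $S > 0$, and by successively restarting the local problem at intermediate times (using \autoref{thm:assoc} to interpret the restarted RDE consistently), standard patching via local uniqueness produces a unique solution on $[0,S)$. There are then two cases. If $S = T$, one more application of local existence starting near $S$ yields a solution on all of $[0,T]$, which is alternative 1. Otherwise $S < T$, and I claim $Y_{[0,S)}$ cannot be contained in any compact $K \subseteq W$: indeed, if it were, then by uniform bounds on $F$ on a neighbourhood of $K$ there would be a single $\delta > 0$ such that for every initial value $y \in K$ the local solution exists on an interval of length at least $\delta$. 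Choosing $t_0 \in (S - \delta/2, S)$ and restarting at $Y_{t_0}$ would produce a solution past $S$ which by uniqueness agrees with the original, contradicting the maximality of $S$. This yields alternative 2, and the uniqueness in both cases is immediate from the local uniqueness together with patching.

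The main obstacle will be Step 1, specifically establishing the local Lipschitz estimate on $\overline Y \mapsto F_*\overline Y$ for arbitrary $p$. Unlike the case $2 \leq p < 3$ treated in \cite{CDL15}, where only the first derivative of $F$ appears, here the Gubinelli derivatives of $F_*\overline Y$ involve the entire combinatorial expansion from \autoref{expl:FH} with contributions from $\partial_\boldk F$ for $|\boldk|$ up to $\lfloor p \rfloor - 1$, each multiplied by ordered-shuffle products of the $\bfH_n$'s. However, these are finitely many polynomial expressions, so uniform multilinear estimates on balls of radius $R$ in controlled-path norm are available, and combined with the short-time smallness of the rough integral they yield the required contraction. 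The rest of the argument is structurally identical to the $2 \leq p < 3$ case of \cite[Corollary 2.17, Theorem 4.2]{CDL15}.
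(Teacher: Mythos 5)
Your proposal is essentially the argument the paper has in mind: the paper gives no proof of its own here, but simply cites \cite[Corollary 2.17, Theorem 4.2]{CDL15} and asserts that the Picard/contraction-plus-patching argument carries over to general $p$, which is exactly what you have spelled out, and your identification of the main new ingredient (uniform multilinear estimates on the finitely many ordered-shuffle terms in $F_*\overline Y$ from \autoref{expl:FH}) is the right one. One technical point deserves more care than your phrase ``Lipschitz constant of order $\omega(0,T_1)^{1/p}$'' suggests: the higher Gubinelli derivatives of $\Phi(\overline Y)$ are simply $(F_*\overline Y)_{n-1}$ with no small prefactor, so the smallness appears only in the remainder seminorms; one must therefore either measure distances between controlled paths sharing the forced initial Gubinelli derivatives (determined by $y_0$ and $F$), or iterate $\Phi$ finitely many times before invoking Banach's theorem. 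This is a standard and fixable adjustment, already present in the $2\leq p<3$ treatment, and does not affect the correctness of your overall scheme.
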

\begin{expl}[Non-autonomous RDEs]\label{expl:nonAuto}
	We can define RDEs that also depend on the driving signal, by \say{doubling the variables} i.e.\
	\begin{equation}
		\dif \bfY = F(Y,X) \dif \bfX \ \stackrel{\text{def}}{\Longleftrightarrow} \ \dif\begin{pmatrix} \bfX \\ \bfY
		\end{pmatrix} = \begin{pmatrix} \mathbbm 1 \\ F(Y,X)
		\end{pmatrix} \dif \bfX.
	\end{equation}
	This will be important when defining RDEs on manifolds, driven by a manifold-valued rough path.
\end{expl}

\section{Weakly geometric rough paths on manifolds}\label{sec:mfds}
In this section we show how our algebraic framework for weakly geometric rough paths can be deployed to transfer the theory to the manifold setting. In this section $M$ will be a smooth $m$-dimensional manifold and $TM$ its tangent bundle. The following definition is similar in spirit to the the one provided in \cite{BL15}.
\begin{defn}[Manifold-valued rough path]\label{def:rpM}
	Given a smooth atlas $\{\varphi \colon A_\varphi \to \bbR^m\}_\varphi$ of $M$, an $M$-valued $p$-\emph{weakly geometric rough path} controlled by $\omega$ on $[0,T]$, $\bfX \in \mathscr C^p_\omega([0,T], M)$, consists of a collection of rough paths ${^\varphi \!}\bfX|_{[a_\varphi,b_\varphi]} \in \mathscr C^p_\omega([a_\varphi,b_\varphi],\bbR^m)$ for all possible collections of intervals $[a_\varphi,b_\varphi]$ (indexed by the chart $\varphi$ in the atlas) s.t.\ $\text{Im}({^\varphi\!}X|_{[a_\varphi,b_\varphi]}) \subseteq \text{Range}(\varphi)$, and that for each pair of intervals $[a_\varphi,b_\varphi]$, $[a_\psi,b_\psi]$
	\begin{equation}\label{eq:compcond}
		(\psi \circ \varphi^{-1})_* {^\varphi \!}\bfX = {^\psi \!}\bfX \in \mathscr C^p_\omega([a_\varphi,b_\varphi] \cap [a_\psi,b_\psi],\bbR^m).
	\end{equation}
	The \emph{trace} of $\bfX$ is the path $t \mapsto X_t \coloneqq \varphi^{-1} ({^\varphi \!}X_t) \in M$ whenever $t \in [a_\varphi,b_\varphi]$ (independently of $\varphi$), $X \in \mathcal C^p([0,T],M)$.
\end{defn}
It makes sense to allow the mappings $\varphi \mapsto {^\varphi \!}\bfX$ and $\varphi \mapsto [a_\varphi, b_\varphi]$ to be multi-valued, so that the same chart can be used multiple times (e.g.\ if the trace $X$ goes back and forth between charts). The definition only depends on the smooth structure of $M$ (i.e.\ an equivalence class of atlases), since a rough path according to a particular atlas is uniquely extended to the maximal atlas for the smooth structure by reading \eqref{eq:compcond} as a definition of the right hand side. Also note that the definition is already fixed once we have ${^\varphi \!}\bfX|_{[a_\varphi,b_\varphi]}$ on a set of intervals, one for each chart, s.t.\ $\bigcup_\varphi (a_\varphi,b_\varphi) = (0,T)$: if the atlas if finite we call this a \emph{finite representation} of $\bfX$. The following example shows how our theory applies to the case of Stratonovich calculus of manifold-valued semimartingales.
\begin{expl}[Stratonovich rough path]\label{expl:strat}
	Let $X$ be an $M$-valued continuous semimartingale, i.e.\ $f(X)$ is a real-valued semimartingale for all $f \in C^\infty(M)$. Since semimartingales are a.s.\ bounded $p$-variation for any $p>2$ we only need to define a rough path above $X$ up to level $2$: we then define its \emph{Stratonovich rough path} in coordinates by
	\begin{equation}\label{eq:stratLift}
		\bfX^{\alpha\beta}_{st} \coloneqq \int_s^t X^\alpha_{su} \circ \dif X^\beta_u
	\end{equation}
	where the integral is intended in the Stratonovich sense. This is well-known to a.s.\ define a rough path in the linear setting, but since the above coordinate expression is taken accoding to a chart, we need the following lemma to establish that it defines a (stochastic) rough path in the sense of \autoref{def:rpM}.
\end{expl}
\begin{lem}
	Let $X$ be an $\bbR^m$-valued continuous semimartingale, $f \in C^\infty(\bbR^m,\bbR^n)$ and $\bfX$ be defined as in \eqref{eq:stratLift}. Then
	\begin{equation}
		(f_*\bfX)^{ij}_{st} = \int_s^t f^i(X) \circ \edif f^j(X).
	\end{equation}
	\begin{proof}
		Using that, in the linear setting, Stratonovich integrals a.s.\ coincide with rough integrals against the Stratonovich rough path \cite[Theorem 9.1]{FH14}
		\begin{align*}
			\int_s^t f^i(X) \circ \dif f^j(X) &= \int_s^t f^i(X) \partial_\gamma f^j(X) \circ \dif X\\
			&= \int_s^t \overline f^i \overline {\partial}_\gamma  \overline f^j(X) \dif  \bfX^\gamma \\
			&\approx f^i \partial_\gamma f^j(X_s)X_{st}^\gamma + (\partial_\alpha f^i \partial_\beta f^j + f^i \partial_{\alpha\beta}f^j)(X_s)\bfX_{st}^{\alpha\beta} \\
			&\approx f^i(X_s) f^j(X)_{st} + \partial_\alpha f^i \partial_\beta f^j(X_s)\bfX^{\alpha\beta}_{st}.
		\end{align*}
		We then have
		\[
		\int_s^t f^i(X)_{su} \dif f^j(X_u) = \int_s^t f^i(X) \dif f^j(X) - f^i(X_s) f^j(X)_{st} \approx \partial_\alpha f^i \partial_\beta f^j(X_s)\bfX^{\alpha\beta}_{st}
		\]
		concluding the proof.
	\end{proof}
\end{lem}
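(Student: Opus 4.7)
The plan is to express the Stratonovich integral $\int_s^t f^i(X) \circ \dif f^j(X)$ in purely rough path terms and then compare the resulting increment, level by level, against the explicit combinatorial formula for the lift $f_*\bfX = \uparrow_{\bfX}\!\!\overline f(X)$ coming from \autoref{def:lift}.

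First I would apply the classical identification (e.g.\ \cite[Theorem 9.1]{FH14}) of Stratonovich integrals with rough integrals against the Stratonovich rough path $\bfX$ defined by \eqref{eq:stratLift}. Together with the Leibniz rule $\overline{f^i(\cdot) \partial_\gamma f^j(\cdot)}(X) = \overline{f^i}(X)\cdot \overline{\partial_\gamma f^j}(X)$ (\autoref{prop:leibniz}) and the fact that $\overline f(X)$ is the controlled path of \autoref{ex:FX}, this allows us to rewrite $\int_s^t f^i(X)\circ \dif f^j(X)$ as a rough integral of an $\bfX$-controlled integrand whose trace and Gubinelli derivative at level $1$ are $f^i(X_s) \partial_\gamma f^j(X_s)$ and $(\partial_\alpha f^i \partial_\beta f^j + f^i \partial_{\alpha\beta} f^j)(X_s)$ respectively.

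Next, using the definition of the rough integral (\autoref{def:rint}) I would obtain the almost-additive local expansion
\[
\int_s^t f^i(X) \circ \dif f^j(X) \approx f^i \partial_\gamma f^j(X_s)\,X^\gamma_{st} + \bigl(\partial_\alpha f^i \partial_\beta f^j + f^i \partial_{\alpha\beta}f^j\bigr)(X_s)\,\bfX^{\alpha\beta}_{st}.
\]
Applying the change of variable formula (\autoref{prop:changeVarSimple}) to $f^j(X)$ one has $f^j(X)_{st} \approx \partial_\gamma f^j(X_s)X^\gamma_{st} + \partial_{\alpha\beta}f^j(X_s)\bfX^{\alpha\beta}_{st}$, which after factoring through $f^i(X_s)$ allows me to rewrite the above as $f^i(X_s)f^j(X)_{st} + \partial_\alpha f^i\partial_\beta f^j(X_s)\bfX^{\alpha\beta}_{st}$ up to $\approx$.

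Subtracting the cross term $f^i(X_s)f^j(X)_{st}$ and invoking $\eqref{eq:lastSumIn}$ in the case $m = 2$, $|\boldc^1| = |\boldc^2| = 1$, I would recognise the resulting approximation as exactly $(\upharpoonleft_{\bfX}\!\!\overline f(X))^{(i,j)}_{st} = \partial_\alpha f^i \partial_\beta f^j(X_s)\bfX^{\alpha\beta}_{st}$. Since both $(s,t) \mapsto \int_s^t f^i(X)_{su}\dif f^j(X_u)$ and $(s,t) \mapsto (f_*\bfX)^{ij}_{st}$ are almost multiplicative functionals (the former by its additivity as an increment minus an increment-correction, the latter by \autoref{thm:almost}) that share the same local approximation, the uniqueness part of \cite[Theorem 3.3.1]{Lyo98} forces them to coincide. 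The only delicate point is the bookkeeping of the $O(\omega(s,t)^{3/p})$ remainders under $\approx$, in particular verifying that $\partial_{\alpha\beta}f^j(X_s)f^i(X_s)\bfX^{\alpha\beta}_{st}$ may be safely absorbed into the Taylor expansion of $f^j(X)_{st}$; this step relies on \eqref{eq:generalApprox} and the regularity of $\bfX$, but is otherwise routine.
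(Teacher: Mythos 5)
Your proposal is correct and follows essentially the same route as the paper's own proof: identify the Stratonovich integral with a rough integral against $\bfX$, expand locally via the controlled-path/Taylor approximation, absorb the $f^i(X_s)\partial_{\alpha\beta}f^j(X_s)\bfX^{\alpha\beta}_{st}$ term into $f^i(X_s)f^j(X)_{st}$, subtract the cross term, and match the remainder $\partial_\alpha f^i\partial_\beta f^j(X_s)\bfX^{\alpha\beta}_{st}$ with the level-two formula for $\upharpoonleft_{\bfX}\!\!\overline f(X)$. The only difference is that you make explicit the appeal to \autoref{prop:leibniz}, \autoref{prop:changeVarSimple} and the uniqueness part of \cite[Theorem 3.3.1]{Lyo98}, which the paper leaves implicit; this is harmless and, if anything, slightly more careful.
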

We can define a path valued in a fixed vector space $V$ and controlled by a manifold-valued rough path as follows:
\begin{defn}\label{def:controlledFixed}
	Let $\bfX \in \mathscr C^p_\omega([0,T],M)$. We define an $V$-valued \emph{$\bfX$-controlled path} $\bfH \in \mathscr D_{\bfX}(V)$ to be a collection ${^\varphi \!}\bfH \in \mathscr D_{{^\varphi\!}\bfX}(V)$ with $\varphi, a_\varphi, b_\varphi$ as in \autoref{def:rpM} and 
	\[
	{^\varphi \!}\bfH * \overline{(\varphi \circ \psi^{-1})}({^\psi\!}X) = {^\psi\!}\bfH.
	\]
\end{defn}
As for rough path, this only depends on the smooth structure of $M$. If we wish to define a controlled integrand, the trace of $H$ must lie in the bundle $\mathcal L(TM,W)$ for some vector space $W$, \say{above} $X$: for this we need a separate definition: 
\begin{defn}[Controlled integrand]\label{def:contrIntM}
	Let $\bfX \in \mathscr C^p_\omega([0,T],M)$. We define an $W$-valued \emph{$\bfX$-controlled integrand} $\bfH \in \mathscr D_{\bfX}(\mathcal L(TM, W))$ to be a collection ${^\varphi \!}\bfH \in \mathscr D_{{^\varphi\!}\bfX}(\mathcal L(\bbR^m,W))$ with $\varphi, a_\varphi, b_\varphi$ as in \autoref{def:rpM} and 
	\begin{equation}
		(\varphi \circ \psi^{-1})^* {^\varphi \!}\bfH = {^\psi\!}\bfH.
	\end{equation}
	The \emph{trace} of $\bfH$ is the path $H \coloneqq {^\varphi\!}H \circ T_X\varphi$, which is valued in the fibre of $X$ of the bundle $\mathcal L (T M, W)$ (if $W = \bbR^e$ then this bundle is $(T^*M)^e$.
\end{defn}
We can now define rough integration on manifolds.
\begin{defn}[Rough integral on manifolds]
	Let $\bfX \in \mathscr C^p_\omega([0,T],M)$ and $\bfH \in \mathscr D_{\bfX}(\mathcal L(TM, W))$. We define the \emph{rough integral}
	\begin{equation}\label{eq:roughIntMfd}
		\int_0^\cdot \bfH \dif \bfX \coloneqq \sum_{[s_\varphi,t_\varphi]} \int_{s_\varphi}^{t_\varphi} {^\varphi\!}\bfH \dif {^\varphi\!}\bfX \in \mathcal C^p_\omega([0,T],W)
	\end{equation}
	where we are summing over a finite partition of $[0,\cdot]$ whose intervals $[s_\varphi,t_\varphi]$ are indexed by charts $\varphi$ with the property that each $X_{[s_\varphi,t_\varphi]}$ is contained in the domain of $\varphi$. 
\end{defn}
This definition does not depend on the subdivision (since the integral taken w.r.t.\ two different subdivisions coincides with that take w.r.t.\ to their common refinement) and does not depend on the charts used, since, for another chart $\psi$ and $X_{[s,t]}$ contained in the domains of $\varphi$ and $\psi$
\begin{align*}
	\int_s^t {^\psi\!}\bfH \dif {^\psi\!}\bfX &= \int_s^t (\varphi \circ \psi^{-1})^* {^\varphi \!}\bfH \dif (\psi \circ \varphi^{-1})_* {^\varphi \!}\bfX \\
	&= \int_s^t (\psi \circ \varphi^{-1})^* (\varphi \circ \psi^{-1})^* {^\varphi \!}\bfH \dif {^\varphi \!}\bfX \\
	&= \int_s^t {^\varphi\!}\bfH \dif {^\varphi\!}\bfX
\end{align*}
by \autoref{thm:pushPull} and \autoref{prop:properties}.

We proceed to the topic of RDEs on manifolds: following the approach of \cite{E89}, and for maximum generality, we will consider both the driving rough path and the solution to be manifold-valued; for this purpose we let $N$ be an $n$-dimensional manifold.
\begin{defn}[RDEs on manifolds]
	Let $\bfX \in \mathscr C^p_\omega([0,T],M)$ and $F$ a section of the vector bundle $\mathcal L(TM,TN)$ (i.e.\ the vector bundle over $N \times M$ with fibres $\mathcal L(TM,TN)_{y,x} \coloneqq \mathcal L(T_xM,T_yN)$). We will say that $\bfY \in \mathscr C^p_\omega([0,T],M)$ is a solution to the RDE
	\begin{equation}\label{eq:RDEM}
		\dif \bfY = F(Y,X) \dif \bfX, \quad Y_0 = y_0
	\end{equation}
	if for all charts $\varphi$ and $\psi$ on $M$ and $N$ respectively we have $Y_0 = y_0$ and on all intervals $[s,t]$ s.t.\ $X_{[s,t]}$ is contained in the domain of $\varphi$ and $Y_{[s,t]}$ is contained in the domain of $\psi$
	\[
	\dif {^\psi\!}\bfY =  F({^\psi\!}Y,{^\varphi\!}X) \dif ({^\varphi\!}\bfX)\quad \text{on } [s,t]
	\]
	in the sense of \autoref{expl:nonAuto}, with initial condition ${^\psi\!}Y_s$.
\end{defn}
This does not depend on the chart: given other chats $\overline \varphi$, $\overline \psi$ we have
\begin{align*}
	\dif\begin{pmatrix} {^{\overline\varphi}\!}\bfX \\ {^{\overline\psi}\!}\bfY
	\end{pmatrix} = \dif\begin{pmatrix} (\overline\varphi \circ \varphi^{-1})_* {^{\varphi}\!}\bfX \\ (\overline\psi \circ \psi^{-1})_*{^{\psi}\!}\bfY
	\end{pmatrix} = \begin{pmatrix} \mathbbm 1 \\ F({^{\overline\psi}\!}Y,{^{\overline\varphi}\!}X)
	\end{pmatrix} \dif ({^{\overline\varphi}\!}\bfX )
\end{align*}
by \autoref{thm:changeVarRDEs} applied to the change of variable $(\overline\varphi \circ \varphi^{-1},
\overline\psi \circ \psi^{-1})$. Local existence can be inferred from \autoref{thm:localE}, which in particular implies global existence if $M$ is compact. More general conditions that guarantee global existence can be found in \cite{Wei18, Dri18}.

We end with a few brief remarks which link the topic of this paper to the existing literature, without elaborating on the details.
\begin{rem}[Stratonovich calculus]
	We can further expand on \autoref{expl:strat} to include the following: if $H$ is a $\mathcal L(TM,W)$-valued semimartingale above the $M$-valued semimartingale $X$ (i.e.\ $H_t \in \mathcal L(T_{X_t}M,W)$) the Stratonovich integral $\int H \dif X$ coincides a.s.\ with the rough integral $\int \bfH \dif \bfX$ against the Stratonovich rough path $\bfX$ for any choice of $\bfH$ with trace $H$, and the solution to the Stratonovich differential equation $\dif Y =  F(Y,X)\dif X$ coincides a.s.\ with the RDE \eqref{eq:RDEM}.
\end{rem}

\begin{rem}[The extrinsic viewpoint]\label{rem:extrinsic}
	In \cite{CDL15} the topic of manifold-valued theory of rough paths, rough integration (specifically of 1-forms) and RDEs was treated from the extrinsic point of view. Here $\bfX \in \mathscr C^p_\omega([0,T],\bbR^d)$ is defined in \cite[Definition 3.17]{CDL15} to be \emph{constrained} to a smoothly embedded manifold $M$ if its trace is $M$-valued and for all 1-forms $F \in \Gamma \mathcal L(T\bbR^d,W)$ ($\Gamma$ denoting the space of sections)
	\begin{equation}\label{eq:originalCon}
		\forall x \in M \ F(x)|_{T_xM} = 0 \ \Rightarrow \ \boldsymbol \int \overline F(X) \dif \bfX = 0 \in \mathscr C^p_\omega([0,T],W).
	\end{equation}
	In \cite[Corollary 3.32, Proposition 3.35]{CDL15} this is shown to be equivalent to the trace $X$ being $M$-valued and $(I \otimes Q(X_s)) \bfX^2_{st} \approx 0$, or equivalently to $(P(X_s) \otimes P(X_s)) \bfX^2_{st} \approx \bfX^2_{st}$, where for $x \in M$ $P(x)$ is the orthogonal projection $T_x\bbR^d \twoheadrightarrow T_xM$ and $Q \coloneqq \mathbbm 1 - P$. Moreover, one may replace $ \bfX^2_{st}$ with its antisymmetric part $(\wedge\bfX^2)_{st}$ in these identities (because $\odot \bfX^2$ is already fixed by the trace).
	
	This approach carries over to the case of higher $p$ considered here. If $\bfX \in \mathscr C^p_\omega([0,T],\bbR^d)$ we may say that it is \emph{constrained} to the smoothly embedded manifold $M$ if $\pi_*\bfX = \bfX$, where $\pi$ is the Riemannian projection of a tubular neighbourhood $U$ of $M$ onto $M$ (i.e.\ it maps a point in $U$ to the unique point on $M$ closest to it - this is well-defined and smooth on a thin enough tubular neighbourhood). This extends the definition of \cite{CDL15} since $D_x\pi = P(x)$. In order to generalise the equivalent condition $(I \otimes Q(X_s))(\wedge\bfX)_{st}$ we can take the log of our original condition, i.e.\ $\log \pi_*\bfX = \log \bfX$: this has the advantage of eliminating all the redundancies of the former (as explained in \cite[p.767]{LS06}), and its precise coordinate expression can be derived by using \cite[Definition 7.20]{FV10}, but at higher orders cannot be described in terms of antisymmetric tensors. The Chen-Strichartz formula \cite[Theorem 1.1]{Bau04}, however, expresses $\log \bfX$ as a Lie polynomial; the task of expressing it in a basis of the Lie algebra is more complex still \cite{Rei17}.
	
	Care must be taken when defining the rough integral of a controlled integrand, since it is no longer the case that for an $\bfX$-controlled integrand $\bfH$, $\int \bfH \dif \bfX$ (defined in the ordinary sense, where $\bfX$ is considered an element of $\mathscr C^p_\omega([0,T],\bbR^d)$) does not always only depend on the trace $H$ of $\bfX$ restricted to $TM$, although this is indeed the case when $\bfH$ is given by a 1-form: this is because if  $F \in \Gamma \mathcal L(T\bbR^d,W)$ ($\Gamma$ denoting the space of sections) vanishes on $TM$ (i.e.\ $F(x)P(x) = 0$ for $x \in M$) by \autoref{thm:pushPull} we have
	\begin{equation}\label{eq:constrInt}
		\begin{split}
			&\int \overline F(X) \dif \bfX
			= \int \overline F(X) \dif \pi_*\bfX =
			\int \pi^*\overline F(X) \dif \bfX = 0 \\
			\text{since}\quad &\pi^*\overline F(X) = (\overline F(X) * \overline \pi (X)) \cdot \overline{D\pi}(X) = \overline F(X) \cdot \overline P(X) = \overline{F(\cdot)P(\cdot)}(X) = 0
		\end{split}
	\end{equation}
	where we have used \autoref{prop:leibniz}. This then implies that if $F,G \in \Gamma \mathcal L(T\bbR^d,W)$ restrict to the same element of $\Gamma\mathcal L(TM, W)$ then $\int \overline F(X) \dif \bfX = \int \overline G(X) \dif \bfX$. Similarly, we have that if $\bfH, \bfK \in \mathcal D_{\bfX}(\mathcal L(TM,W))$ are such that $\pi^*\bfH = \pi^*\bfK$ then $\int \bfH \dif \bfX = \int \bfK \dif \bfX$, but this involves conditions on all levels of $\bfH$, not just the trace (for a simple counterexample where this identity fails when only assuming $H P(X) = K P (X)$ see \cite[Example 4.3]{ABCR}).
	
	Finally, the original definition of constrained rough path given by integration \eqref{eq:originalCon} also carries over to higher $p$. The fact that this is implied by $\pi_*\bfX = \bfX$ was shown in \eqref{eq:constrInt}. For the converse, we rewrite the identity as $(\mathbbm 1 - \pi)_* \bfX = 0$: the trace level is implied by the fact that $X$ is $M$-valued, and at orders $\geq 1$ the identity $\boldsymbol \int \overline Q(X) \dif \bfX = (\mathbbm 1 - \pi)_* \bfX$ is straightforward to check.
\end{rem}

\begin{rem}[Parallel transport and Cartan development]
	If $M$ has a connection $\nabla$ (with Christoffel symbols $\Gamma$), given $\bfX \in \mathscr C^p_\omega([0,T],M)$ we can define parallel transport of vectors above its trace $X$ as the solution to the RDE driven by $\bfX$, valued in $TM$, defined by the horizontal lift based at $X$: in local coordinates the parallel frame path $t \mapsto A_t \in T_{X_t}M$ satisfies
	\begin{equation}
		\dif A^\gamma = - \Gamma^\gamma_{\alpha\beta}(X)A^\beta \dif \bfX^\alpha.
	\end{equation}
	The Cartan development $\bfY$ of a $\boldsymbol Z \in \mathscr C^p_\omega([0,T],T_oM)$ (with $o \in M$ a fixed basepoint) can be viewed as the projection onto $M$ of an $FM$-valued RDE driven by $\boldsymbol Z$, defined the fundamental horizontal vector fields (the full solution additionally consists of a parallel frame above the developed path): in local coordinates
	\begin{equation}
		\begin{cases}
			\dif \bfY^k = A^k_\gamma \dif \boldsymbol Z^\gamma  \\
			\dif A^k_\gamma = - \Gamma^k_{ij}(Y) A^i_\alpha A^j_\gamma \dif \boldsymbol Z^\alpha
		\end{cases}.
	\end{equation}
	Here we have emphasised how development actually defines a (possibly explosive) $M$-valued rough path $\boldsymbol Y$, and not just its trace.
\end{rem}

\bibliographystyle{alpha} \addcontentsline{toc}{section}{References}
\bibliography{bibliography}

\end{document}